\newtheorem{thm}{Theorem}[section]
\newtheorem{lem}[thm]{Lemma}
\newtheorem{prop}[thm]{Proposition}
\theoremstyle{definition}
\newtheorem{defn}[thm]{Definition}
\newtheorem{rmk}[thm]{Remark}
\newcommand{\ba}[1]{\begin{array}{#1}}
\newcommand{\ea}{\end{array}}
\newcommand{\R}{\mathbb{R}}
\newcommand{\Z}{\mathbb{Z}}
\newcommand{\N}{\mathbb{N}}
\newcommand{\C}{\mathbb{C}}
\newcommand{\al}{\alpha}
\newcommand{\eps}{\varepsilon}
\def\R{{\mathbf{R}}}
\def\Z{{\mathbf{Z}}}
\def\N{{\mathbf{N}}}
\newcommand{\grad}{\nabla}
\newcommand\Tu{\mathbb{T}}
\newcommand\grando[1]{\mathcal{O}(#1)}
\newcommand{\LBN}{
\text{  
{\Large
$\Delta\!\!\!\!\circ$
}}}
\newcommand{\sing}{\mathcal Z}
\newcommand{\inm}{M}
\newcommand{\mcil}{M_{\text{cylinder}}}
\newcommand{\mcono}{M_{\text{cone}}}
\newcommand{\distcil}{d_{\text{cylinder}}}
\newcommand{\distcono}{d_{\text{cone}}}
\newcommand{\clm}{M_\alpha}
\newcommand{\metr}{g}
\newcommand{\cc}[1]{{C^\infty_c\left(#1\right)}}
\newcommand{\dom}[1]{D(#1)}
\newcommand{\dmin}{D_\mathrm{min}}
\newcommand{\dmax}{D_{\mathrm{max}}}
\DeclareMathOperator\spn{span}
\newcommand{\hilb}{\mathcal H}
\newcommand{\phid}{\phi_D}
\newcommand{\phidp}{\phi_D^+}
\newcommand{\phidm}{\phi_D^-}
\newcommand{\phidpm}{\phi_D^\pm}
\newcommand{\phin}{\phi_N}
\newcommand{\phinp}{\phi_N^+}
\newcommand{\phinm}{\phi_N^-}
\newcommand{\phinpm}{\phi_N^\pm}
\newcommand{\delfz}{\widehat\Delta_0}
\newcommand{\delfzf}{(\widehat\Delta_0)_F}
\newcommand{\delfzcomp}{\widehat\Delta_0|_{\cc {\R\setminus\{0\}}}}
\newcommand{\delfk}{\widehat\Delta_k}
\newcommand{\dminf}{\dmin ({\delfz})}
\newcommand{\dmaxf}{\dmax ({\delfz})}
\newcommand{\lduef}{L^2(\R\setminus\{0\},|x|^{-\alpha}dx)}
\newcommand{\sobf}{H^1(\R\setminus\{0\},|x|^{-\alpha}dx)}
\newcommand{\sobfzero}{H^1_0(\R\setminus\{0\},|x|^{-\alpha}dx)}
\newcommand{\sobduef}{H^2(\R\setminus\{0\},|x|^{-\alpha}dx)}
\newcommand{\ldue}{L^2(M,\misura)}
\newcommand{\lduec}{L^2\left(M_\alpha,\misura\right)}
\newcommand{\sobzero}{H^1_0(M,\misura)}
\newcommand{\sob}{H^1(M,\misura)}
\newcommand{\sobc}{H^1\left(M_\alpha,\misura\right)}
\newcommand{\sobduec}{H^2\left(M_\alpha,\misura\right)}
\newcommand{\sobdue}{H^2(M,\misura)}
\newcommand{\sobduezero}{H^2_0(M,\misura)}
\DeclareMathOperator{\supp}{supp}
\newcommand{\delcomp}{\Delta|_{\cc \inm}}
\newcommand{\delf}{\Delta_F}
\newcommand{\delc}{\deltabridge}
\newcommand{\delb}{\deltabridge}
\newcommand{\deltabridge}{\Delta_B}
\newcommand{\deld}{\delf}
\newcommand{\delm}{\Delta_N}
\newcommand{\dex}{\mathcal E}
\newcommand{\dexf}{\mathcal E_F}
\newcommand{\dexb}{\mathcal E_B}
\newcommand{\dexm}{\mathcal E^+}
\newcommand{\fdexf}{\widehat{\mathcal E}_0}
\newcommand{\fdexfk}{\widehat{\mathcal E}_k}
\newcommand{\misura}{d\omega}
\newcommand{\bridge}{bridging }
\begin{document}

\title{Self-adjoint extensions and stochastic completeness of the Laplace-Beltrami operator on conic and anticonic surfaces}	
\author{Ugo~Boscain$^\dagger$ and Dario~Prandi$^\dagger$$^\ddagger$}
\address{$^\dagger$Centre National de Recherche Scientifique (CNRS), CMAP, \'Ecole Polytechnique, Route de Saclay, 91128 Palaiseau Cedex, France and Team GECO, INRIA-Centre de Recherche Saclay}
\address{$^\ddagger$LSIS, Université de Toulon, 83957 La Garde Cedex, France,}
\email{ugo.boscain@polytechnique.edu, dario.prandi@univ-tln.fr}
\thanks{
	This work was  supported by the European Research Council, ERC StG 2009 “GeCoMethods”, contract number 239748, and by the ANR project {\it GCM}, program ``Blanche'', project number NT09\_504490,  and by the Laboratoire d'Excellence Archimède, Aix-Marseille Université.
  }

\maketitle

\begin{abstract}
	We study the evolution of the heat and of a free quantum particle (described by the Schr\"odinger equation) on two-dimensional manifolds endowed with the degenerate Riemannian metric $ds^2=dx^2+|x|^{-2\alpha}d\theta^2$, where $x\in \R$, $\theta\in\Tu$ and the parameter $\alpha\in\R$.
	For $\alpha\le-1$ this metric describes cone-like manifolds (for $\alpha=-1$ it is a flat cone).
	For $\alpha=0$ it is a cylinder.
	For $\alpha\ge 1$ it is a Grushin-like metric.
	We show that the Laplace-Beltrami operator $\Delta$ is essentially self-adjoint
	if and only if $\alpha\notin(-3,1)$.
	In this case the only self-adjoint extension is the Friedrichs extension $\delf$, that does not allow communication through the singular set $\{x=0\}$ both for the heat and for a quantum particle. 
	For $\alpha\in(-3,-1]$ we show that for the Schr\"odinger equation only the average on $\theta$ of the wave function can cross the singular set, while the solutions of the only Markovian extension of the heat equation (which indeed is $\delf$) cannot.
	For $\alpha\in(-1,1)$ we prove that there exists a canonical self-adjoint extension $\deltabridge$, called \bridge extension, which is Markovian and allows the complete communication through the singularity (both of the heat and of a quantum particle).
	Also, we study the stochastic completeness (i.e., conservation of the $L^1$ norm for the heat equation) of the Markovian extensions $\delf$ and $\deltabridge$, proving that $\delf$ is stochastically complete at the singularity if and only if $\alpha\le -1$, while $\deltabridge$ is always stochastically complete at the singularity.

	\vspace{.4cm}
	\noindent
	\textbf{Key words:} heat and Schr\"odinger equation, degenerate Riemannian manifold, Grushin plane, stochastic completeness.

	\smallskip
	\noindent
	\textbf{2010 AMS subject classifications:} 53C17, 35R01, 35J70.
\end{abstract}

\section{Introduction}
In this paper we consider the Riemannian metric on $M=\big(\R\setminus\{0\}\big)\times\Tu$ whose orthonormal basis has the form:
\begin{eqnarray}
\label{eq-base}
X_1(x,\theta)=\left(\ba{c}1\\0 \ea\right),~~~X_2(x,\theta)=\left(\ba{c}0\\|x|^{\alpha} \ea\right),~~~\al\in\R.
\end{eqnarray}
Here $x\in\R\setminus\{0\}$, $\theta\in \Tu$ and $\alpha\in\R$ is a parameter. 
In other words we are interested in the Riemannian manifold $(M,\metr)$, where
\begin{eqnarray}
\label{eq-metric}
\metr=dx^2+|x|^{-2\al} d\theta^2, \mbox{ i.e., in matrix notation }\metr=\left(\ba{cc} 1&0\\0&|x|^{-2\al}\ea\right).
\end{eqnarray}
Define 
\begin{gather*}
	\mcil=\R\times\Tu, \qquad \mcono=\mcil/\sim,
\end{gather*}
where $(x_1,\theta_1)\sim(x_2,\theta_2)$ if and only if $x_1=x_2=0$.
In the following we are going to suitably extend the metric structure to $\mcil$ through \eqref{eq-base} when $\alpha\ge 0$, and to $\mcono$ through \eqref{eq-metric} when $\alpha< 0$.

Recall that, on a general two dimensional Riemannian manifold for which there exists a global orthonormal frame, the distance between two points can be defined equivalently as
\begin{multline}\label{eq:distcontr}
	d(q_1,q_2)=\inf\bigg\{ \int_0^1 \sqrt{u_1(t)^2+u_2(t)^2}\,dt \, \mid \gamma:[0,1]\to M \text{ Lipschitz },\gamma(0)=q_1,\,\gamma(1)=q_2 \\
	\text{ and } u_1,\,u_2 \text{ are defined by } \dot\gamma(t)=u_1(t)X_1(\gamma(t))+u_2(t)X_2(\gamma(t)) \bigg\},
\end{multline}
\begin{equation}\label{eq:distmetr}
	d(q_1,q_2)=\inf\left\{ \int_0^1 \sqrt{g_{\gamma(t)}(\dot\gamma(t),\dot\gamma(t))}\,dt \,\mid \gamma:[0,1]\to M \text{ Lipschitz },\gamma(0)=q_1,\,\gamma(1)=q_2 \right\},
\end{equation}
where $\{X_1,X_2\}$ is the global orthonormal frame for $(M,g)$.

{\bf Case $\alpha\ge 0$.}
Similarly to what is usually done in sub-Riemannian geometry (see e.g., \cite{noteandrei}), when $\alpha\ge 0$, formula \eqref{eq:distcontr} can be used to define a distance on $\mcil$ where $X_1$ and $X_2$ are given by formula \eqref{eq-base}.
We have the following, whose proof is contained in Appendix~\ref{sec:geom}).

\begin{lem}
	\label{lem:mcil}
	For any $\alpha\ge 0$, formula \eqref{eq:distcontr} endows $\mcil$ with a metric space structure, which is compatible with its original topology.
\end{lem}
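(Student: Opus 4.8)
The plan is to establish the two assertions in turn — that $d$ given by \eqref{eq:distcontr} is a bona fide distance on $\mcil$, and that its metric topology coincides with the product topology of $\R\times\Tu$ — and the whole argument will hinge on two elementary pointwise bounds. Along any admissible curve $\gamma=(x,\theta)$, with $\dot\gamma=u_1X_1+u_2X_2$, one has $|\dot x|=|u_1|\le\sqrt{u_1^2+u_2^2}$ and $|\dot\theta|=|x|^\alpha|u_2|\le |x|^\alpha\sqrt{u_1^2+u_2^2}$. The first shows that the $x$-coordinate is $1$-Lipschitz for $d$ and, upon integration, that $\sup_{[s,t]}|x|\le|x(s)|+\ell$ where $\ell$ is the length of $\gamma|_{[s,t]}$; feeding this into the integral of the second bound gives the key inequality $|\theta(t)-\theta(s)|\le(|x(s)|+\ell)^\alpha\,\ell$. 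This is exactly where $\alpha\ge0$ enters: it keeps $|x|^\alpha$ bounded near the singular circle $\{x=0\}$, so $\theta$-displacement is controlled by length. (For $\alpha<0$ this breaks down, the points of $\{x=0\}$ collapse, and one must instead pass to $\mcono$.)

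For the metric axioms, symmetry and the triangle inequality are the usual curve-reversal and concatenation arguments. Finiteness reduces to exhibiting one admissible curve of finite length between any two points: within $M$ this is the standard connectedness of the Riemannian manifold $(M,g)$, and $(0,\theta)$ is joined to $(1,\theta)\in M$ by the horizontal segment $t\mapsto(t,\theta)$ of length $1$, so in particular $d\big((0,\theta_0),(0,\theta_1)\big)\le2+2\pi$. For positivity, take $p\ne q$: if they have distinct $x$-coordinates, then $d(p,q)\ge|x_p-x_q|>0$; if $x_p=x_q=0$, the key inequality with $s=0$, $x(0)=0$ gives $|\theta_p-\theta_q|\le\ell^{1+\alpha}$ for every competitor, hence $d(p,q)\ge|\theta_p-\theta_q|^{1/(1+\alpha)}>0$; if $x_p=x_q=x_0\ne0$, I observe that any admissible curve from $p$ to $q$ of length $\ell$ either exits the compact annulus $\{|x_0|/2\le|x|\le 2|x_0|\}$ — forcing $\ell\ge|x_0|/2$ via the Lipschitz bound — or remains in it, where $g$ is uniformly comparable to the flat metric, forcing $\ell$ to be at least a positive multiple of the flat distance from $p$ to $q$; either way $d(p,q)>0$.

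For the topology, since $\R\times\Tu$ is metrizable it suffices to show the identity map is a homeomorphism, i.e.\ is sequentially continuous in both directions. If $p_n\to p$ for the product topology I construct explicit short admissible curves: if $p=(x_0,\theta_0)$ with $x_0\ne0$, comparability of $g$ with the flat metric near $p$ gives $d(p_n,p)\lesssim|p_n-p|_{\mathrm{eucl}}\to0$; if $x_0=0$, writing $p_n=(x_n,\theta_n)$ I move horizontally from $(0,\theta_0)$ to $(\rho_n,\theta_0)$, then in $\theta$ at radius $\rho_n$ to $(\rho_n,\theta_n)$, then horizontally to $(x_n,\theta_n)$, of total length at most $3\rho_n+\rho_n^{-\alpha}|\theta_n-\theta_0|$, and pick $\rho_n\to0$ with $\rho_n^{-\alpha}|\theta_n-\theta_0|\to0$ (e.g.\ $\rho_n=\sqrt{|x_n|}+|\theta_n-\theta_0|^{1/(\alpha+1)}$), so $d(p_n,p)\to0$. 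Conversely, if $d(p_n,p)\to0$, the Lipschitz bound gives $|x_n-x_p|\le d(p_n,p)\to0$, and applying the key inequality to near-length-minimizing curves from $p_n$ to $p$ yields $|\theta_n-\theta_p|\le\big(|x_n|+d(p_n,p)+o(1)\big)^\alpha\big(d(p_n,p)+o(1)\big)\to0$ (with the exponent-$\alpha$ factor replaced by $1$ when $\alpha=0$); hence $p_n\to p$ for the product topology.

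I expect the only real obstacle to be the analysis of admissible curves that run near or touch the singular circle $\{x=0\}$, which a priori could be cheap there and wreck either positivity of $d$ or the identification of the topologies; the two pointwise bounds above, and above all the combined estimate $|\theta(t)-\theta(s)|\le(|x(s)|+\ell)^\alpha\ell$, are precisely what prevents this. Their justification uses the standard but slightly delicate fact that an admissible decomposition $\dot\gamma=u_1X_1+u_2X_2$ forces $\dot\theta=0$ almost everywhere on $\{x=0\}$ once $\alpha>0$ (otherwise $u_2$ would be undefined there). The remaining ingredients — comparing $g$ with the flat metric on compact subsets of $M$, and the elementary choices of connecting curves — are routine.
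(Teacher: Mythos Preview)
Your proof is correct and follows essentially the same strategy as the paper's: both arguments rest on the two pointwise bounds $|\dot x|\le\sqrt{u_1^2+u_2^2}$ and $|\dot\theta|\le|x|^\alpha\sqrt{u_1^2+u_2^2}$, yielding the key estimate $|\theta(t)-\theta(s)|\le(|x(s)|+\ell)^\alpha\ell$ (which is exactly the paper's computation $|\theta_n(0)-\bar\theta|\le\|u_n\|_{L^1}(\|u_n\|_{L^1}+|\bar x|)^\alpha$), and both build the converse direction from the same three-segment curves (horizontal, then angular at a carefully chosen radius $\rho_n$, then horizontal). The only cosmetic differences are that you prove positivity of $d$ by a direct case analysis while the paper extracts it from the sequential equivalence with $\distcil$, and your choice of radius $\rho_n=\sqrt{|x_n|}+|\theta_n-\theta_0|^{1/(\alpha+1)}$ differs slightly from the paper's $(\bar\theta-\theta_n)^{1/(2\alpha)}$; neither difference is substantive.
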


{\bf Case $\alpha<0$.} In this case $X_1$ and $X_2$ are not well defined in $x=0$. 
However, one can extend the metric structure via formula \eqref{eq:distmetr} considering the metric $g$ given by \eqref{eq-metric}.
Since this metric identifies points on $\{x=0\}$, in the sense that they are at zero distance, formula \eqref{eq:distmetr} defines a well-defined metric space structure not to $\mcil$ but to $\mcono$.
Indeed, we have the following, proved in Appendix~\ref{sec:geom}.

\begin{lem}
	\label{lem:mcono}
	For $\alpha<0$, formula \eqref{eq:distmetr} endows $\mcono$ with a metric space structure, which is compatible with its original topology.
\end{lem}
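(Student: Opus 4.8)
The plan is to extend the metric tensor to all of $\mcil=\R\times\Tu$ and to read \eqref{eq:distmetr} with Lipschitz curves valued in $\mcil$ (equivalently, in $\mcono$). Write $\beta:=-\alpha>0$, so that \eqref{eq-metric} becomes $\metr=dx^2+|x|^{2\beta}\,d\theta^2$; this is a continuous symmetric $2$-tensor on $\mcil$, positive definite off the ``waist'' $\{x=0\}$ and degenerate (vanishing in the $\theta$-direction) on it. Then $\ell(\g)=\int_0^1\sqrt{\metr_{\g(t)}(\dot\g(t),\dot\g(t))}\,dt$ makes sense for every Lipschitz $\g\colon[0,1]\to\mcil$, and \eqref{eq:distmetr} defines a function $d\colon\mcil\times\mcil\to[0,+\infty]$. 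I would prove the lemma in three steps: (i) $d$ is finite; (ii) $d$ is a pseudo-distance with $d(p,q)=0$ if and only if $p\sim q$, so it descends to an honest distance on $\mcono$; (iii) the descended distance induces the quotient topology.

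For (i)--(ii), symmetry and the triangle inequality are immediate by reversing and concatenating curves. For finiteness, two points of one component $(0,\pm\infty)\times\Tu$ of $M$ are joined by a coordinate segment of finite $\metr$-length (the tensor being smooth and locally bounded there), and any $q_0=(x_0,\theta_0)$ is joined to any waist point by the radial segment $s\mapsto(\mathrm{sgn}(x_0)\,s,\theta_0)$, $s\in[0,|x_0|]$, whose $\metr$-length equals $|x_0|$; concatenation connects any pair. For non-degeneracy I would use two elementary bounds valid for every Lipschitz $\g=(x(\cdot),\theta(\cdot))$: first $\ell(\g)\ge\int_0^1|\dot x|\,dt=\mathrm{Var}(x(\cdot))\ge|x(1)-x(0)|$, and second $\ell(\g)\ge c^{\beta}\int_a^b|\dot\theta|\,dt\ge c^{\beta}\,d_\Tu(\theta(a),\theta(b))$ on any subinterval $[a,b]$ on which $|x|\ge c$, where $d_\Tu$ is the arc-length distance on $\Tu$. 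The waist segment has zero $\metr$-length because $\beta>0$, hence $d\equiv0$ on $\{x=0\}$. Conversely, if $p\not\sim q$ then either $x(p)\ne x(q)$, whence $d(p,q)\ge|x(p)-x(q)|>0$; or $x(p)=x(q)=x_0\ne0$ and $\theta(p)\ne\theta(q)$, in which case, by continuity of $x(\cdot)$, every connecting curve either stays in $\{x>x_0/2\}$ (so $\ell(\g)\ge(|x_0|/2)^{\beta}d_\Tu(\theta(p),\theta(q))>0$) or visits $\{|x|\le|x_0|/2\}$ and hence has $\mathrm{Var}(x(\cdot))\ge|x_0|$, so $\ell(\g)\ge|x_0|>0$. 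Thus $d(p,q)=0\iff p\sim q$, and $d$ passes to $\mcono$.

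For (iii), write $\ast$ for the cone point and $\tau$ for the quotient topology. Near a non-singular point $q_0=(x_0,\theta_0)$, $x_0\ne0$, I would take a small coordinate box $U$ with $\overline U\subset M$ and show that on $U$ the distance $d$ is bi-Lipschitz to the coordinate distance: the upper bound is given by the coordinate segment joining two points of $U$ (its $\metr$-length being at most a constant times the Euclidean length, since $|x|^{\beta}$ is bounded on $U$), and the lower bound follows from the dichotomy of step (ii) --- a connecting curve either stays close to $x_0$, where $\metr$ dominates the fixed positive-definite constant-coefficient metric $dx^2+(x_0/2)^{2\beta}d\theta^2$, or it makes an excursion toward the waist, accumulating $x$-variation bounded below by a fixed constant, while the coordinate distance on $U$ is bounded above. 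Hence $d$ induces on $U$ the standard topology, which coincides with $\tau$ there. At $\ast$, the projection $\pi\colon\mcono\to\R$, $[(x,\theta)]\mapsto x$, is $1$-Lipschitz for $d$ by the first bound above (length controls $\mathrm{Var}(x\circ\g)$), so $B_d(\ast,r)\subseteq\pi^{-1}\big((-r,r)\big)$, i.e.\ the image of $(-r,r)\times\Tu$; conversely the radial segment gives $d(\ast,q)\le|x(q)|$, so that image is contained in $B_d(\ast,r)$. Since $\{0\}\times\Tu$ is compact, the images of the tubes $(-r,r)\times\Tu$, $r>0$, form a $\tau$-neighbourhood basis at $\ast$; therefore $d$ and $\tau$ have the same neighbourhoods at $\ast$ as well, and the two topologies agree.

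The step I expect to be the real obstacle is the local bi-Lipschitz bound near the non-singular points: because the $\theta$-direction becomes cost-free as $x\to0$, a connecting curve is a priori free to ``shortcut'' by diving toward the shrinking waist, and this must be excluded quantitatively. The dichotomy above --- either the curve remains in a region $\{|x|\ge c\}$, where $\metr\gtrsim dx^2+c^{2\beta}\,d\theta^2$ is uniformly non-degenerate, or it reaches $\{|x|<c\}$ and pays at least $2(|x_0|-c)$ in $x$-variation --- is precisely what makes it work, and it is essentially the only place where the specific degeneracy $|x|^{2\beta}$, $\beta>0$, is used.
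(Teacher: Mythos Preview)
Your proof is correct and follows the same core strategy as the paper: both use the inequality $\ell(\gamma)\ge\int_0^1|\dot x|\,dt\ge|x(1)-x(0)|$ to bound $d$ from below, and both use radial segments (plus zero-cost waist segments) to bound $d$ from above at the cone point. The difference is one of detail rather than method. The paper's proof introduces an explicit auxiliary metric $\distcono$ on $\mcono$ and verifies the equivalence of sequential convergence, but at non-singular points it simply asserts that ``$d$ is equivalent to the Euclidean metric on $(0,+\infty)\times\Tu$'' without justification. Your dichotomy argument --- either the curve stays in $\{|x|\ge|x_0|/2\}$ where $\metr$ dominates a fixed non-degenerate constant metric, or it dives toward the waist and accumulates $x$-variation at least $|x_0|$ --- supplies exactly the missing quantitative bound for that step, and your explicit identification $B_d(\ast,r)=\pi^{-1}\big((-r,r)\big)$ is a cleaner way to handle the cone point than the paper's sequential argument. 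One cosmetic point: when you write ``stays in $\{x>x_0/2\}$'' in step (ii) you are implicitly assuming $x_0>0$; it would be cleaner to say this explicitly or to write $\{|x|>|x_0|/2\}$ and then invoke continuity to conclude the curve stays on one side.
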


\begin{rmk}[Notation]
	In the following we denote by $M_\alpha$ the generalized Riemannian manifold given as follows,
	\begin{itemize}
		\item $\alpha\ge0$: $M_\alpha=\mcil$ and metric structure induced by \eqref{eq-base};
		\item $\alpha< 0$: $M_\alpha=\mcono$ and metric structure induced by \eqref{eq-metric}.
	\end{itemize}
	The corresponding metric space is $(M_\alpha,d)$ and we let $\sing$ be the singular set, i.e., 
	\[
		\sing=
		\begin{cases}
		    \{0\}\times\Tu, &\qquad \alpha\ge 0,\\
		    \{0\}\times\Tu/\sim  &\qquad \alpha< 0.\\
		\end{cases}
	\]
	Observe that $\sing$ splits $M_\alpha$ in two sides $M^+=(0,+\infty)\times\Tu$ and $M^-=(-\infty,0)\times\Tu$.
\end{rmk}

Notice that, for $\alpha=1,2,3,\ldots$, $M_\alpha$ is an almost Riemannian structure in the sense of \cite{gaussbonnet,ars,bellaiche,arsnormal,lipschitzequiv}, while for $\alpha=-1,-2,-3,\ldots$ it corresponds to a singular Riemannian manifold with a semi-definite metric.

One of the main features of these metrics is that, except for $\al=0$, the corresponding Riemannian volumes have a singularity at $\sing$,
\begin{eqnarray*}
\misura=\sqrt{\det g} \,dx\,d\theta=|x|^{-\alpha}dx\,d\theta.
\end{eqnarray*}
Due to this fact, the corresponding Laplace-Beltrami operators contain some diverging first order terms,
\begin{eqnarray}
\label{eq-LB}
\Delta=\frac{1}{\sqrt{\det g}}\sum_{j,k=1}^2 \partial_j \left(\sqrt{\det g} \, g^{jk}\partial_k\right) =\partial_x^2+|x|^{2\alpha}\partial_\theta^2u-\frac{\alpha}{x}\partial_x
\end{eqnarray}

We have the following geometric interpretation of the metric  structure of $M_\alpha$ (see Figure \ref{fig:int-geo}).
For $\alpha=0$  the metric is the one of a cylinder, while for $\alpha=-1$ it is the one of a flat cone in polar coordinates.
For $\alpha<-1$, $M_\alpha$ is isometric to a surface of revolution $\mathcal S=\{ (t, r(t)\cos\vartheta, r(t)\sin\vartheta ) \mid t>0,\, \vartheta\in\Tu\}\subset\R^3$ with profile $r(t)\sim|t|^{-\alpha}$ as $|t|$ goes to zero.
For $\alpha>-1$ ($\alpha\neq 0$) it can be thought as a surface of revolution having a profile of the type $r(t)\sim |t|^{-\alpha}$ as $t\rightarrow 0$.
This interpretation for $\alpha>-1$ is only formal, since the embedding in $\R^3$ is deeply singular in a neighborhood of $t=0$.
Finally, the case $\alpha=1$ corresponds to the  Grushin metric on the cylinder. 
This geometric interpretation is explained in Appendix~\ref{sec:surfaces}.
\begin{figure}
	\includegraphics[width=\textwidth]{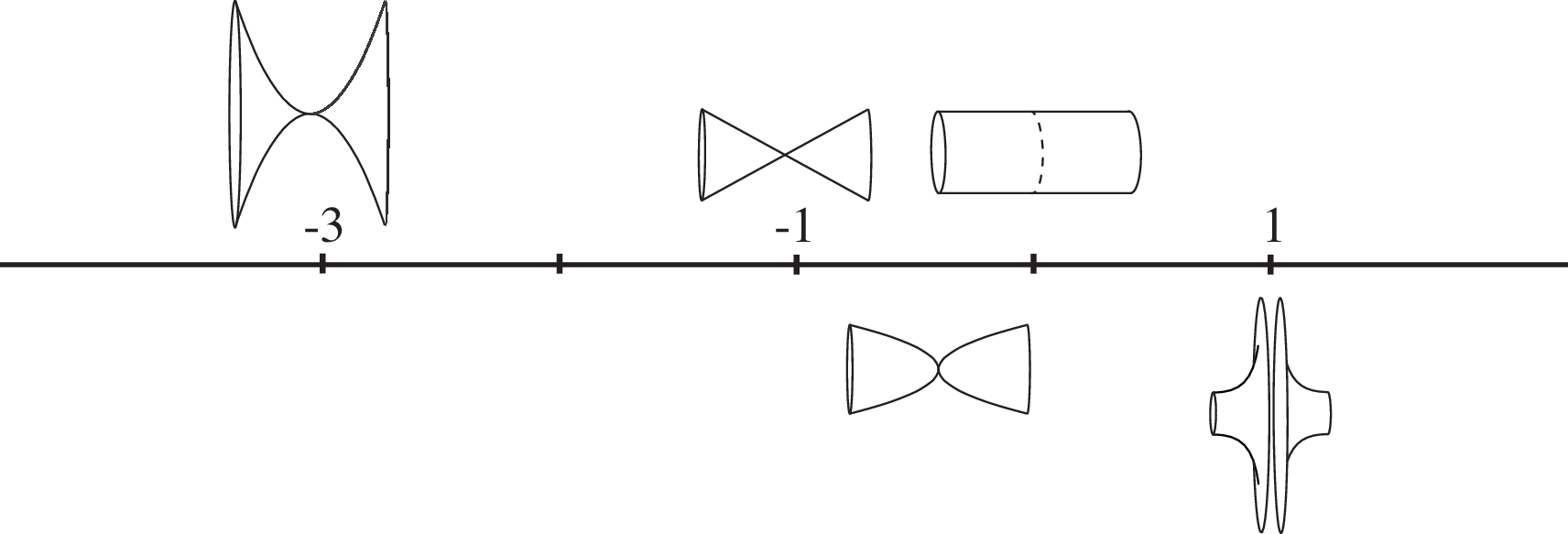}	
	\caption{Geometric interpretation of $M_\alpha$. The figures above the line are isometric to $M_\alpha$, while for the ones below the isometry is singular in a neighborhood of  $\sing$.}
	\label{fig:int-geo}
\end{figure}

\begin{rmk}
The curvature of $M_\alpha$ is given by
$
	K_\alpha(x)=- {\alpha(1+\alpha)}{x^{-2}}.
$
Notice that $M_\al$ and $M_\beta$ with $\beta=-(\alpha+1)$ have the same curvature for any $\alpha\in\R$
.
For instance, the cylinder with Grushin metric has the same curvature as the cone corresponding to $\alpha=-2$, but they are not isometric even locally (see \cite{arsnormal}).
\end{rmk}

\subsection{The problem} % (fold)
\label{sec:the_problem}

In this paper we are interested in the global behavior on $M_\alpha$ of the heat and free quantum particles, as modeled, respectively, by the heat and the Schr\"odinger equation
\begin{gather}
\label{eq-heat}
\partial_t \psi=\Delta\psi,\\
\label{eq-schr}
i \partial_t \psi=-\Delta \psi,
\end{gather}
where $\Delta$ is given by \eqref{eq-LB}.  
To give a meaning to these equations one needs to specify what $\Delta$ means on $\sing$, and to define in which function spaces we are working.  
In particular, it is natural to require $\Delta$ to be a self-adjoint operator on $L^2(M,\misura)$ (see Theorem~\ref{thm:stone}).
Thus, we will consider $\Delta|_\cc M$ and characterize all its self-adjoint extensions by prescribing opportune boundary conditions at the singularity $\sing$.

We will consider the following problems.
\begin{enumerate}
	 \renewcommand{\theenumi}{(Q\arabic{enumi})}
	 \renewcommand{\labelenumi}{\theenumi}
	\item \label{q1} Do the heat and free quantum particles flow through the singularity? 
	That is, there exists a self-adjoint extension of $\Delta|_\cc M$ such that, given an initial condition supported at time $t=0$ in $M^-$, is it possible that at time  $t>0$ the corresponding solution has some support in $M^+$?  \footnote{Notice that this is a necessary condition to have some positive controllability results by means of controls defined only on one side of the singularity, in the spirit of \cite{cannarsagrushin}. For a discussion of how hidden magnetic fields affect the self-adjointness of $\Delta$, we refer to \cite{aharonov}. For some results on 3D manifolds see \cite{moussa}.}

\item \label{q3} Given a self-adjoint extension of $\Delta$, does equation (\ref{eq-heat}) conserve the total heat (i.e. the $L^1$ norm of $\psi$)? 
This is equivalent to the problem of the stochastic completeness of $M_\alpha$, i.e.\ that the stochastic process defined by the diffusion $\Delta$ almost surely has infinite lifespan. 
% This is known as the problem of the stochastic completeness of $M_\alpha$. 
In particular, we are interested in understanding if the heat is absorbed by the singularity.

For the Schr\"odinger equation only the question of conservation of total probability (i.e., the $L^2$ norm) has a physical meaning.
This question has trivial positive answer thanks to Stone's theorem.
\end{enumerate}

\begin{rmk}
By making the unitary change of coordinates in the Hilbert space $U:L^2(M,\misura)\to L^2(M,dxd\theta)$, defined by $Uv(x)=|x|^{-\nicefrac{\alpha}{2}}v(x)$, the Laplace-Beltrami operator is transformed in
\[
	\LBN=U\Delta U^{-1} = \partial_x^2 - \frac \alpha 2\left(1+\frac \alpha 2 \right)\frac{1}{x^2}+|x|^{2\alpha}\partial_\theta^2.
\]
This transformation was used to study the essential self-adjointness of $\Delta|_\cc{M}$ for $\alpha=1$ in \cite{boscainlaurent}.
Let us remark that, when acting on functions independent of $\theta$, the operator $\LBN$ reduces to the Laplace operator on $\R\setminus\{0\}$ in presence of an inverse square potential, usually called Calogero potential (see, e.g., \cite{gitman}).
\end{rmk}

\subsection{Self-adjoint extensions}

The problem of determining the self-adjoint extensions of $\delcomp$ on $\ldue$ has been widely studied in different fields. A lot of work has been done in the case $\alpha=-1$, in the setting of Riemannian manifolds with conical singularities (see e.g., \cite{cheeger,mooers}), and the same methods have been applied in the more general context of metric cusps or horns that covers the case $\alpha<-1$ (see e.g., \cite{cheeger2,bruning,lesch}). 
Concerning $\alpha>-1$, on the other hand, the literature regarding $\Delta$ is more thin (see e.g., \cite{morgan}). 

In the following we will consider only the real self-adjoint extensions, i.e., all the function spaces taken into consideration are composed of real-valued functions.
We refer to Appendix~\ref{app:schroedinger} for a discussion of the complex case.

Any closed symmetric extension $A$ of $\delcomp$ is such that $\dmin (\delcomp)\subset \dom A \subset \dmax (\delcomp)$, where the minimal and maximal domains are defined as
\begin{gather*}
	\dmin(\delcomp)=\dom {\overline{\Delta}} = \text{closure of } \cc M \text{ with respect to the norm } \|\Delta \cdot\|_{\lduec}+ \|\cdot\|_{\lduec},\\
	\dmax (\delcomp) = \dom {\Delta^*} = \{ u\in \lduec\colon\: \Delta u\in \lduec \text{ in the sense of distributions}\}.
\end{gather*}
Since it holds that $Au=\Delta^*u$ for any $u\in \dom A$, determining the self-adjoint extensions of $\delcomp$ amounts to classify the so-called domains of self-adjointness.
Following \cite{grigoryan}, we let the Sobolev spaces on the Riemannian manifold $M$ endowed with measure $\misura$, whose Riemannian gradient is $\nabla u(x,\theta)=(\partial_x u(x,\theta) , |x|^{2\alpha}\partial_\theta u(x,\theta))$, to be
\begin{gather*}
	\sob=\{ u \in \ldue\colon\: |\nabla u| \in \ldue \}, \quad \sobzero=\text{closure of }\cc M \text{ in } \sob,\\
	\sobdue=\{ u \in \sob\colon\:  \Delta u \in \ldue \}, \quad \sobduezero= \sobdue\cap\sobzero.
\end{gather*}
We define the Sobolev spaces $\sobc$ and $\sobduec$ in the same way.
We remark that with these conventions $\sobduezero$ is in general bigger than the closure of $\cc M$ in $\sobdue$.
Moreover, it may happen that $\sob=\sobzero$. 
Indeed this property will play an important role in the next section.
Proposition~\ref{prop:dmax}, contains a description of $\dmax(\delcomp)$ in terms of these Sobolev spaces.

Although in general the structure of the self-adjoint extensions of $\delcomp$ can be very complicated, the Friedrichs (or Dirichlet) extension $\deld$ is always well defined and self-adjoint.
Namely, 
\[
	\dom{\deld} = \sobduezero.
\]
Observe that, since $\ldue= L^2(M^+,\misura)\oplus L^2(M^-,\misura)$ and $\sobzero=H^1_0(M^+,\misura)\oplus H^1_0(M^-,\misura)$, it follows that
\[
	\dom{\deld}=\{ u\in H^1_0(M^+,\misura) \mid \Delta u \in L^2(M^+,\misura) \}\oplus \{ u\in H^1_0(M^-,\misura) \mid \Delta u \in L^2(M^-,\misura) \}.
\]
This implies that $\deld$ actually defines two separate dynamics on $M^+$ and on $M^-$ and, hence, that there is no hope for an initial datum concentrated in $M^+$ to pass to $M^-$, and vice versa. 
This proves that, if $\Delta|_ {\cc M}$ is essentially self-adjoint (i.e., the only self-adjoint extension is $\deld$) the question \ref{q1} has a negative answer. 

\subsubsection{Essential self-adjointness of $\delcomp$}
The rotational symmetry of the cones suggests to proceed by a Fourier decomposition in the $\theta$ variable, considering the corresponding orthonormal basis $\{ e_k \}_{k\in\Z}\subset L^2(\Tu)$ and the decomposition $\ldue=\bigoplus_{k=-\infty}^{\infty}H_k$, $H_k \cong \lduef$.
Then, 
\begin{equation}
	\label{eq:delfk}
	\Delta = \bigoplus_{k=-\infty}^{+\infty} \widehat \Delta_k,
	\qquad\text{where}\qquad
	\widehat \Delta_k = \partial_x^2 - \frac \alpha x \partial_x -|x|^{2\alpha}k^2.
\end{equation}
As proved in Proposition~\ref{prop:fourier}, the essential self-adjointness of all the operators $\widehat \Delta_k$ on $\cc {\R\setminus\{0\}}$ is a sufficient condition for  $\delcomp$ to satisfy the same property. 

The following theorem extends a result in \cite{boscainlaurent} by classifying the essential self-adjointness of $\delcomp$ and its Fourier components and is proved in Section~\ref{sec:proof}.
We remark that the same result holds if $\delcomp$ acts on complex-valued functions (see Theorem~\ref{thm:sacompl}).

\begin{thm}\label{thm:sa}
    Consider $M_\alpha$ for $\alpha \in\R$ and the corresponding Laplace-Beltrami operator $\delcomp$ as an unbounded operator on $\ldue$.
    Then,
    \begin{enumerate}
    \renewcommand{\theenumi}{\roman{enumi}}
        \item If $\alpha\le -3$ the operator $\delcomp$ is essentially self-adjoint;
        \item if $\alpha\in(-3,-1]$, only its first Fourier component $\widehat \Delta_0$ is not essentially self-adjoint;
        \item if $\alpha\in(-1,1)$, all the Fourier components of $\delcomp$ are not essentially self-adjoint;
        \item if $\alpha\ge1$ the operator $\delcomp$ is essentially self-adjoint.
    \end{enumerate}
\end{thm}

As a corollary of this theorem, we get the following answer to \ref{q1}.

\begin{center}
\begin{tabular}{c|c}
$\alpha \le -3$ & Nothing can flow through $\sing$ \\ \hline
$-3< \alpha \le -1$ & Only the average over $\Tu$ of the function can flow through $\sing$ \\ \hline
$-1< \alpha < 1$ & It is possible to have full communication between the two sides \\ \hline
$1\le \alpha $ & Nothing can flow through $\sing$ \\ \hline
\end{tabular}
\end{center}

\begin{rmk}
	When $\alpha\in(-3,0)$, since the singularity reduces to a single point, one would expect to be able to ``transmit'' through $\sing$ only the average over $\Tu$ of a function.
	Theorem~\ref{thm:sa} shows that this is the case for $\alpha\in(-3,-1]$, but not for $\alpha\in(-1,0)$.
	Looking at $M_\alpha$, $\alpha\in(-1,0)$, as a surface embedded in $\R^3$ the possibility of transmitting Fourier components other than $k=0$, is due to the deep singularity of the embedding.
	In this case we say that the contact between $M^+$ and $M^-$ is {\em non-apophantic}.
\end{rmk}

In the following we will describe the self-adjoint extension realising the maximal communication between the two sides, which we call the \emph{bridging extension}, in order to have a more precise answer to \ref{q1} for $\alpha\in (-3,1)$.
In particular, to identify this extension when $\alpha\in(-3,-1]$, it will be sufficent to study only the first Fourier component.
Indeed, by Theorem~\ref{thm:sa}, for these values of $\alpha$ it is possible to decompose any self-adjoint extension $A$ of $\delcomp$ as
\begin{equation}
	\label{eq:decompfourier}
	A= \widehat A_0 \oplus \left( \bigoplus_{k\in\Z\setminus\{0\}} \delfk \right).
\end{equation}
Here, $\widehat A_0$ is a self-adjoint extension of $\delfz$ and, with abuse of notation, we denoted the only self-adjoint extension of $\delfk$ by $\delfk$ as well.

\subsubsection{The first Fourier component $\delfz$}
In this section we describe the real self-adjoint extensions of $\delfzcomp$ on $\lduef$ when $\alpha\in(-3,1)$.
For a description of its complex self-adjoint extensions, we refer to Theorem~\ref{thm:saecompl}.
Observe that, since this operator is regular at the origin in the sense of Sturm-Liouville problems (see Definition~\ref{defn:regular}) if and only if $\alpha>-1$, for $\alpha\le-1$ the boundary conditions will be asymptotic, and not punctual.

Let $\phidp$ and $\phinp$ be two smooth functions on $\R\setminus\{0\}$, supported in the interval $(0,2)$, and such that, for any $x\in(0,1]$ it holds
\begin{equation}
	\label{def:phidn}
    \phidp(x) = 1 , \qquad \phinp(x) = 
	\begin{cases}
	    ({1+\alpha})^{-1}\,{x^{1+\alpha}} &\qquad \text{if } \alpha\neq-1,\\
	    \log(x) &\qquad \text{if } \alpha=-1.	    
	\end{cases}
\end{equation}
Let also $\phidm(x)=\phidp(-x)$ and $\phinp(x)=\phinm(-x)$. 
Finally, recall that, on $\R\setminus\{0\}$ endowed with the Euclidean structure, the Sobolev space ${\sobduef}$ is the space of functions $u\in\lduef$ such that $|\partial_x u|, |\partial^2_x u| \in\lduef$.
Then, the following holds.
 
\begin{thm} \label{thm:sae}
	Let $\dminf$ and $\dmaxf$ be the minimal and maximal domains of $\delfzcomp$ on $\lduef$, for $\alpha\in (-3,1)$.
	Then, 
	\begin{gather*}
		\dminf =  \text{closure of } \cc {\R\setminus\{0\}} \text{ in } H^2(\R\setminus\{0\},|x|^{-\alpha}dx)\\
		\dmaxf = \{ u=u_0+u^+_D\phidp + u^+_N \phinp + u^-_D\phidm + u^-_N \phinm \colon\: u_0\in\dminf \text{ and } u^\pm_D,\,u_N^\pm\in\R\},
	\end{gather*}
	Moreover, $A$ is a self-adjoint extension of $\delfz$ if and only if $Au=(\delfz)^*u$, for any $u\in\dom A$, and one of the following holds
	\begin{enumerate}
 	\renewcommand{\theenumi}{\roman{enumi}}
		\item \em Disjoint dynamics: \em there exist $c_+,c_-\in (-\infty,+\infty]$ such that 
			\[
				\dom A = \big\{ u\in\dmaxf\colon\: u_N^+=c_+u^+_D \text{ and } u_N^-=c_- u^+_D\big\}.
			\]
		\item \em Mixed dynamics: \em there exist $K\in SL_2(\R)$ such that
			\[
				\dom A = \big\{ u\in\dmaxf \colon\: (u_D^-,u_N^-)^T= K \, (u_D^+,u_N^+)^T \big\}.
			\]
	\end{enumerate}
	Finally, the Friedrichs extension $(\delfz)_F$ is the one corresponding to the disjoint dynamics with $c_+=c_-=0$ if $\alpha\le -1$ and with $c_+=c_-=+\infty$ if $\alpha>-1$.
\end{thm}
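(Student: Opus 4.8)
The statement is a one-dimensional Sturm--Liouville problem on $\R\setminus\{0\}$. Since this set is disconnected, $\delfzcomp$ is the orthogonal sum of its restrictions to $\cc{(0,+\infty)}$ and $\cc{(-\infty,0)}$, so it is enough to analyse one half-line near $0$ and then recombine; writing $w(x)=|x|^{-\alpha}$, the differential expression is $\delfz u=w^{-1}(wu')'$. First I would record the endpoint classification: at $\pm\infty$ the operator is in the limit-point case for every $\alpha$, whereas at $0$ it is in the limit-circle case precisely when $\alpha\in(-3,1)$, since $\phidp\equiv 1$ and $\phinp$ solve $\delfz u=0$ near $0$ and lie in $L^2((0,1),\misurazero)$ exactly for $\alpha<1$, resp.\ $\alpha>-3$ (this is also implicit in Theorem~\ref{thm:sa}). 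Hence the deficiency index of $\delfzcomp$ is $1+1=2$ and $\dim(\dmaxf/\dminf)=4$. It then remains to: identify $\dminf$ with a weighted $H^2$-space; expand $\dmaxf$ asymptotically at $0$; classify the self-adjoint extensions via the boundary sesquilinear form; and locate the Friedrichs extension.

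Both $\dminf$ and the $\sobduef$-closure of $\cc{\R\setminus\{0\}}$ are completions of $\cc{\R\setminus\{0\}}$ inside $\lduef$, so the first displayed identity reduces to the equivalence of the graph norm $\|\delfz\cdot\|_{\lduef}+\|\cdot\|_{\lduef}$ and the $\sobduef$-norm on $\cc{\R\setminus\{0\}}$, \emph{uniformly} up to $0$. One direction is elementary, using $-\langle\delfz u,u\rangle=\int|u'|^2\misurazero$ and $\|u''\|_{\lduef}\le\|\delfz u\|_{\lduef}+|\alpha|\,\|u'/x\|_{\lduef}$; the substance is a Hardy inequality bounding $\int|u'|^2|x|^{-\alpha-2}\,dx$ by $\int(|u'|^2+|u''|^2)\misurazero$ and conversely. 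A convenient route is to conjugate by the unitary $u\mapsto|x|^{-\alpha/2}u$ from $\lduef$ onto $\ldueh$, which turns $\delfz$ into the inverse-square operator $\partial_x^2-\gamma x^{-2}$ with $\gamma=\tfrac\alpha2(1+\tfrac\alpha2)=\tfrac14((\alpha+1)^2-1)\ge-\tfrac14$, and to invoke the standard a priori estimates for the Calogero potential in this range of $\gamma$; the borderline value $\gamma=-\tfrac14$ (i.e.\ $\alpha=-1$, where $\delfz$ is the radial Laplacian on $\R^2$) is dealt with directly using logarithmic cut-offs. For $\dmaxf$, limit-circle theory at $0$ ensures that for all $u,v\in\dmaxf$ the limits
\[
[u,v](0^\pm):=\lim_{x\to0^\pm}|x|^{-\alpha}\big(u'(x)v(x)-u(x)v'(x)\big)
\]
exist and vanish whenever $u\in\dminf$. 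Variation of parameters on $(0,\delta)$ in the basis $\phidp,\phinp$, together with $\delfz u\in\lduef$, then yields $u=u_D^+\phidp+u_N^+\phinp+r$ near $0^+$ with the remainder $r\in\dminf$, and symmetrically near $0^-$; the normalisation \eqref{def:phidn} is chosen so that $[\phidpm,\phinpm](0^\pm)\neq0$ while $[\phidpm,\phidpm](0^\pm)=[\phinpm,\phinpm](0^\pm)=0$, which makes the coefficients $u_D^\pm,u_N^\pm$ unique, and by the dimension count the functions $\phidp,\phinp,\phidm,\phinm$ span $\dmaxf$ modulo $\dminf$. This is exactly the stated description of $\dmaxf$.

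Green's formula, with the boundary terms at $\pm\infty$ vanishing by the limit-point property, gives for $u,v\in\dmaxf$
\[
\langle\delfz u,v\rangle-\langle u,\delfz v\rangle=\big(u_D^+ v_N^+-u_N^+ v_D^+\big)+\big(u_D^- v_N^--u_N^- v_D^-\big),
\]
so by von Neumann's extension theory (in the Glazman--Krein--Naimark form) the self-adjoint extensions of $\delfz$ are in bijection with the Lagrangian subspaces of $\R^4$ for this symplectic form, which is the orthogonal sum of the standard forms on the $(u_D^+,u_N^+)$- and $(u_D^-,u_N^-)$-planes. Any such subspace either splits as a product of two Lagrangian lines — every line through the origin of a symplectic plane being Lagrangian, hence of the form $\{u_N^\pm=c_\pm u_D^\pm\}$ with $c_\pm\in\R$, or $\{u_D^\pm=0\}$ (the case $c_\pm=+\infty$) — which is case~(i); or its projection to the first plane is onto, so it is the graph of a linear isomorphism $K\colon(u_D^+,u_N^+)\mapsto(u_D^-,u_N^-)$, the isotropy condition being exactly that $K$ intertwines the symplectic structures of the two planes, i.e.\ $K\in SL_2(\R)$, which is case~(ii). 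Conversely, each of these subspaces is maximal isotropic, so the corresponding restriction of $(\delfz)^*$ is self-adjoint; that $Au=(\delfz)^*u$ on $\dom A$ is automatic for any restriction of the adjoint.

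Finally, the Friedrichs extension $(\delfz)_F$ is the restriction of $(\delfz)^*$ to $\dmaxf\cap\sobfzero$, where the form domain $\sobfzero$ is the closure of $\cc{\R\setminus\{0\}}$ in the norm $\|\cdot\|_{\lduef}^2+\int|u'|^2\misurazero$ (note $-\langle\delfz u,u\rangle=\int|u'|^2\misurazero\ge0$). Since $\dminf\subset\sobfzero$, it only remains to decide which of $\phidpm,\phinpm$ lie in $\sobfzero$: a standard cut-off (capacity) argument shows that the singular point has zero $H^1$-capacity for the weight $|x|^{-\alpha}$ exactly when $\alpha\le-1$, so $\phidpm\in\sobfzero$ iff $\alpha\le-1$, while $\phinpm\in\sobfzero$ iff $\alpha>-1$ (indeed $\phinpm\notin H^1((0,1),\misurazero)$ when $\alpha\le-1$). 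As on each side precisely one of the two boundary solutions lies in $\sobfzero$, intersecting $\dmaxf$ with $\sobfzero$ forces $u_N^\pm=0$ when $\alpha\le-1$ and $u_D^\pm=0$ when $\alpha>-1$ — that is, the disjoint-dynamics extension with $c_+=c_-=0$, resp.\ $c_+=c_-=+\infty$. I expect the two genuinely analytic points to be the uniform graph-norm / weighted-$H^2$ equivalence (the Hardy/inverse-square estimate, which degenerates precisely at $\alpha=-1$) and the assertion that the remainder $r$ above really belongs to $\dminf$; the symplectic classification in step three and the capacity computation are then routine.
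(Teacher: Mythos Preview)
Your proposal is correct and follows essentially the same route as the paper: the unitary conjugation to the inverse-square (Calogero) operator for the $\dminf$ characterization, the basis $\{\phidpm,\phinpm\}$ for $\dmaxf/\dminf$, and the boundary-form classification of self-adjoint extensions. Where the paper invokes Zettl's general Sturm--Liouville classification theorem as a black box and, for the Friedrichs identification in the regular case $\alpha>-1$, cites a result from Schm\"udgen's book, you instead unpack these as the Lagrangian-plane classification in the symplectic boundary space and a capacity computation respectively --- these are the same arguments presented at a different level of detail rather than a different approach.
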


From the above theorem (see Remark~\ref{rmk:dirneu}) it follows that $u_N^\pm=\lim_{x\rightarrow 0^\pm} |x|^{-\alpha}\,\partial_x u(x)$ and, if $-1<\alpha<1$, that $u_D^\pm=u(0^\pm)$.
Moreover, the last statement implies that 
\[
	\dom{(\delfz)_F}=
		\begin{cases}
			\{u\in\dmaxf\colon\: u_N^+=u_N^-=0\} \qquad &\text{if }\alpha\le-1,\\
			\{u\in\dmaxf\colon\: u(0^+)=u(0^-)=0\}\qquad &\text{if }\alpha>-1.
		\end{cases}
\]
In particular, if $\alpha\le -1$ the Friedrichs extension does not impose zero boundary conditions.

Clearly, the evolutions associated with the disjoint dynamics extensions yield a negative answer to \ref{q1}. On the other hand, the mixed dynamics extensions permit information transfer between the two halves of the space. 
Since to classify the self-adjoint extensions for $\alpha\in(-3,-1]$ it is enough to study $\delfz$, this analysis completes the classification for this case.
On the other hand, since for $\alpha\in(-1,1)$ all the Fourier components are not essentially self-adjoint, a complete classification requires more sophisticated techniques. 
We will, in turn, study some selected extensions.

\begin{rmk}
The mixed dynamics extension with $K=\text{Id}$ is the \emph{bridging extension} of the first Fourier component, which we will denote by $(\delfz)_B$.
If $\alpha\in(-3,-1]$, the bridging extension $\delb$ of $\delcomp$ is then defined by \eqref{eq:decompfourier} with $A_0=(\delfz)_B$.
The bridging extension for $\alpha\in(-1,1)$ is described in the following section.
\end{rmk}

\subsection{Markovian extensions}

It is a well known result, that each non-positive self-adjoint operator $A$ on an Hilbert space $\hilb$ defines a strongly continuous contraction semigroup, denoted by $\{e^{tA}\}_{t\ge0}$, see Theorem~\ref{thm:stone}. If $\hilb=\ldue$ and it holds $0\le e^{tA} u \le 1$ $\misura$-a.e. whenever $u\in \ldue$, $0\le u \le 1$ $\misura$-a.e., the semigroup $\{e^{tA}\}_{t\ge0}$ and the operator $A$ are called Markovian. 
The interest for Markov operators lies in the fact that, under an additional assumption which is always satisfied in the cases we consider (see Section~\ref{sec:bilinear}), Markovian operators are generators of Markov processes $\{X_t\}_{t\ge 0}$ (roughly speaking, stochastic processes which are independent of the past).
% The set of all the Markovian self-adjoint extensions of $\delcomp$ is denoted by $\extm \Delta$.

Since essentially bounded functions are approximable from $\ldue$, the Markovian property allows to extend the definition of $e^{tA}$ from $\ldue$ to  $L^\infty(M,\misura)$. Let $1$ be the constant function $1(x,\theta)\equiv1$. Then \ref{q3} is equivalent to the following property.

\begin{defn}
	\label{def:stoc}
	A Markovian operator $A$ is called \em stochastically complete \em (or conservative) if $e^{tA}1=1$, for any $t>0$. 
	It is called \emph{explosive} if it is not stochastically complete.
\end{defn}

It is well known that this property is equivalent to the fact that the Markov process $\{X_t\}_{t\ge0}$, with generator $A$, if it exists, has almost surely infinite lifespan. 

For any $u\in L^2(M,d\omega)$ and $t>0$, define
\begin{equation*}
	S_t u = \int_0^t e^{tA}u\, ds.
\end{equation*}
The family $\{S_t\}_{t>0}$ is a well-defined family of symmetric operators on $L^2(M,d\omega)$ which can be extended to $L^1(M,dw)$ and satisfies $S_t u(q)\le S_{t'}u(q)$ for $d\omega$-a.e.\ $q\in M$ whenever $t'>t$.
Then, we let
\begin{equation*}
	Gu(q) = \lim_{n\rightarrow+\infty} S_n u(q) \in [0,+\infty],
\end{equation*}
and pose the following.
% This allows to pose the following (see \cite{fukushima}).

\begin{defn}\label{def:recurr}
	A Markovian operator is called \emph{recurrent} if $0<Gu(q)<+\infty$ for $d\omega$-a.e.\ $q\in M$ and $u\in L^1(M,d\omega)$ positive.
\end{defn}

When $A$ is the generator of a Markov process $\{X_t\}_{t\ge0}$, the recurrence of $A$ is equivalent to 
\[
	\mathbb P_q \{ \text{there exists a sequence } t_n\rightarrow +\infty \text{ such that } X_{t_n}\in \Omega \} =1,
\]
for any set $\Omega$ of positive measure and any point $q\in M$.
Here $\mathbb P_q$ denotes the measure in the space of paths emanating from a point $q $ associated to $\{X_t\}_{t\ge0}$. 

\begin{rmk}
	As it is suggested from the probabilistic interpretation, recurrence of an operator implies its stochastic completeness. 
	Equivalently, any explosive operator is not recurrent.
\end{rmk}

We are particularly interested in distinguish how the stochastically completeness and the recurrence are influenced by the singularity $\sing$ or by the behavior at $\infty$. 
Thus we will consider the manifolds with borders $M_0=M\cap ([-1,1]\times \Tu)$ and $M_\infty=M\setminus [-1,1]\times \Tu$, with Neumann boundary conditions.
Indeed, with these boundary conditions, when the Markov process $\{X_t\}_{t\ge 0}$ hits the boundary it is reflected, and hence the eventual lack of recurrence or stochastic completeness on $M_0$ (resp. on $M_\infty$) is due to the singularity $\sing$ (resp. to the behavior at $\infty$).
According to Definition~\ref{def:stocorigin}, if a Markovian operator $A$ on $M$ is recurrent (resp. stochastically complete) when restricted on $M_0$ we will call it recurrent (resp. stochastically complete) at $0$.
Similarly, when the same happens on $M_\infty$, we will call it recurrent (resp. stochastically complete) at $\infty$.
As proven in Proposition~\ref{prop:unione}, a Markovian extension of $\delcomp$ is recurrent (resp. stochastically complete) if and only if it is recurrent (resp. stochastically complete) both at $0$ and at $\infty$.

In this context, it makes sense to give special consideration to three specific self-adjoint extensions, corresponding to different conditions at $\sing$.
Namely, we will consider the already mentioned Friedrichs extension $\deld$, that corresponds to an absorbing condition, the Neumann extension $\delm$, that corresponds to a reflecting condition, and the \bridge extension $\delb$, that corresponds to a free flow through $\sing$ and is defined only for $\alpha\in(-1,1)$.
In particular, the latter two have the following domains (see Proposition~\ref{prop:domains}),
\begin{align*}
	&\dom {\delm} = \{ u\in\sob \mid (\Delta u,v)=(\nabla u,\nabla v) \text{ for any } v\in \sob\},\\
	&\dom {\delc} = \{ \sobduec \mid u(0^+,\cdot)=u(0^-,\cdot),\, \lim_{x\rightarrow 0^+} |x|^{-\alpha}u(x,\cdot)=\lim_{x\rightarrow 0^-} |x|^{-\alpha}u(x,\cdot) \text{ for a.e. }\theta\in\Tu\}.
\end{align*}
Each one of $\delf$, $\delm$ and $\delb$ is a self-adjoint Markovian extension.
However, it may happen that $\delf=\delm$. In this case $\delf$ is the only Markovian extension, and the operator $\delcomp$ is called Markov unique.
This is the case, for example, when $\delcomp$ is essentially self-adjoint.

The following result, proved in Section~\ref{sec:stoccompl}, will answer to \ref{q3}.

\begin{thm} \label{thm:markov}
  Consider $M$ endowed with the Riemannian metric defined in (\ref{eq-metric}), for $\alpha\in\R$, and $\delcomp$ be the corresponding Laplace-Beltrami operator, defined as an unbounded operator on $\ldue$.
    Then the following holds.
    \begin{enumerate}
    \renewcommand{\theenumi}{\roman{enumi}}
        \item If $\alpha< -1$ then $\delcomp$ is Markov unique, and $\deld$ is stochastically complete at $0$ and recurrent at $\infty$;
        \item if $\alpha=-1$ then $\delcomp$ is Markov unique, and $\deld$ is recurrent both at $0$ and at $\infty$;
        \item if $\alpha\in(-1,1)$, then $\delcomp$ is not Markov unique and, moreover,
        	\begin{enumerate}
        	    \item any Markovian extension of $\delcomp$ is recurrent at $\infty$, 
        	    \item $\delf$ is explosive at $0$, while both $\delb$ and $\delm$ are recurrent at $0$,
        	\end{enumerate}
        \item if $\alpha\ge1$ then $\delcomp$ is Markov unique, and $\deld$ is explosive at $0$ and recurrent at $\infty$;
    \end{enumerate}
\end{thm}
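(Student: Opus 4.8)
\medskip
\noindent\textbf{Proof plan.}
The assertions split along the two ends of $M_\alpha$ — the singularity $\sing$ and the end at infinity — which by Proposition~\ref{prop:unione} may be treated in isolation, with Neumann conditions imposed at $\{|x|=1\}$; and on each end the rotational symmetry reduces everything to the one-dimensional weighted Sturm--Liouville operators $\delfk=\partial_x^2-\tfrac\alpha x\partial_x-|x|^{2\alpha}k^2$ acting in $L^2$ with respect to $|x|^{-\alpha}dx$. The argument then has two independent strands: determining \emph{which} self-adjoint extensions are Markovian (Markov uniqueness), and analysing recurrence/conservativeness of $\deld$, $\delm$, $\delb$ at each end.

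\emph{Markov uniqueness.} I would use the classical fact that $\delcomp$ is Markov unique if and only if the Friedrichs and Neumann extensions coincide, equivalently $\sobzero=\sob$. For $\alpha\le-3$ and $\alpha\ge1$ this is immediate from the essential self-adjointness in Theorem~\ref{thm:sa}. For $\alpha\in(-3,-1]$, Theorem~\ref{thm:sa} reduces the equality $\sobzero=\sob$ to the zeroth Fourier mode, the higher modes being essentially self-adjoint and hence contributing no discrepancy; for that mode the equality is equivalent to the vanishing of the capacity of $\sing$, which I would obtain by exhibiting radial cut-offs $\eta$ with $\eta\equiv0$ near $\sing$, $\eta\equiv1$ on $\{|x|\ge\eps\}$ and $\int_M|\nabla\eta|^2\,\misura$ as small as we please: the minimal energy of such a cut-off over an annulus $\{\de<|x|<\eps\}$ is comparable to $\big(\int_\de^\eps|x|^{\alpha}\,dx\big)^{-1}$, which tends to $0$ as $\de\to0$ precisely because $\int_0|x|^{\alpha}\,dx=+\infty$ when $\alpha\le-1$. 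Conversely, for $\alpha\in(-1,1)$ the function which equals $1$ near $\sing$ and is cut off away from infinity belongs to $\sob\setminus\sobzero$, so $\deld\neq\delm$; this, together with the transmitting extension $\delb$, exhibits several distinct Markovian extensions, so $\delcomp$ is not Markov unique.

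\emph{Behaviour at the two ends.} Here I would run Feller's boundary classification for the radial operator $\partial_x^2-\tfrac\alpha x\partial_x$, whose scale function is $s(x)=\int_1^x|t|^{\alpha}\,dt$ (so $s\sim\log|x|$ when $\alpha=-1$) and whose speed measure is $|x|^{-\alpha}dx$; the nonnegative potential $|x|^{2\alpha}k^2$ from the higher modes only contributes a killing part and does not affect accessibility. At $x=0$ one finds that the endpoint is \emph{natural} and inaccessible when $\alpha\le-1$, \emph{regular} when $\alpha\in(-1,1)$, and an \emph{exit} endpoint when $\alpha\ge1$; from Proposition~\ref{prop:domains} (and Theorem~\ref{thm:sae}) the extensions $\deld$, $\delm$, $\delb$ impose at $\sing$ an absorbing, a reflecting, and a transmitting condition respectively. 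Matching condition to endpoint type then gives the behaviour at $0$: when $\alpha\le-1$, $\sing$ is unreachable so $\deld$ is conservative there — in fact recurrent, since $M_0$ has finite volume and, at $\alpha=-1$, the logarithmic divergence of $s$ reflects the parabolicity of the flat cone; when $\alpha\in(-1,1)$, $\sing$ is accessible, so the absorbing condition makes $\deld$ explosive at $0$, while for $\delm$ and $\delb$ the constant $1$ lies in the form domain with zero energy, whence by the standard recurrence criterion for Dirichlet forms these are recurrent at $0$; and when $\alpha\ge1$ the exit endpoint is reached in finite time against an absorbing condition, so $\deld$ is again explosive at $0$. The behaviour at infinity is obtained by the same analysis at $x=\infty$, through the scale function $s(\infty)=\int^\infty|t|^{\alpha}\,dt$ together with the volume-growth/Khasminskii criteria for conservativeness, and it does not depend on the choice of extension, since the extensions differ only in the condition at $\sing$.

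\emph{Expected main difficulty.} The delicate steps are: (i) the reduction ``Markov uniqueness $\iff\sobzero=\sob\iff$ the capacity of $\sing$ vanishes'', together with the passage from the zeroth Fourier mode to the full operator when $\alpha\in(-3,-1]$; and (ii) the borderline $\alpha=-1$, where conservativeness of $\deld$ at $0$ must be upgraded to genuine recurrence — one must produce cut-offs $\eta_n\to1$ vanishing near $\sing$ with $\int_M|\nabla\eta_n|^2\,\misura\to0$, which is possible only because $\int_0|x|^{-1}\,dx=+\infty$, and dually establish a matching capacity lower bound ruling out recurrence once $\alpha>-1$. A preliminary point that also has to be dispatched is that $\delb$ is a self-adjoint Markovian extension distinct from $\deld$ and $\delm$ when $\alpha\in(-1,1)$; this is read off from the boundary conditions in Theorem~\ref{thm:sae}.
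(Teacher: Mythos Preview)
Your plan is correct and tracks the paper's architecture closely: reduce to the zeroth Fourier mode (the paper's Proposition~\ref{prop:zero}), split the two ends via Neumann restriction at $|x|=1$ (Proposition~\ref{prop:unione}), settle Markov uniqueness through $\sobzero=\sob$ and the capacity of $\sing$ (Proposition~\ref{prop:onlyfried} and Lemma~\ref{lem:sobzero}), and certify recurrence of $\delm,\delb$ at $0$ by noting that $1$ lies in the form domain with zero energy. The genuine difference is one of dialect. You run Feller's boundary classification for the radial diffusion---scale $s(x)=\int^x t^\alpha\,dt$, speed $|x|^{-\alpha}dx$---and invoke Khas'minskii for conservativeness at $\infty$; the paper stays entirely inside Fukushima's Dirichlet-form framework, writing down explicit equilibrium potentials $u_{r,R}$ for the capacity estimates (Lemma~\ref{lem:sobzero}), proving explosion at $0$ for $\alpha>-1$ by a direct contradiction against the test function $v_R(x)=(x/R)^{1+\alpha}$ (Proposition~\ref{prop:fried}), and citing Grigor'yan's model-manifold criterion for conservativeness at $\infty$ when $\alpha<-1$. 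These are equivalent: your divergence $\int_0 t^\alpha\,dt=+\infty$ for $\alpha\le-1$ \emph{is} the paper's vanishing-capacity computation, and its $u_{r,R}$ are harmonic for your scale. Feller's packaging buys you a clean natural/regular/exit trichotomy that makes the case split at $0$ transparent without ad~hoc test functions; the paper's hands-on route is self-contained within Dirichlet-form theory and never imports one-dimensional diffusion machinery. For statement~(iii)(a) you argue that all Markovian extensions agree at $\infty$ because they differ only at $\sing$; the paper instead uses that $\delf$ is the minimal Markovian extension and that recurrence propagates upward (\cite[Theorem~1.6.4]{fukushima})---both are valid.
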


In particular, Theorem~\ref{thm:markov} implies that for $\alpha\in(-3,-1]$ no mixing behavior defines a Markov process. 
On the other hand, for $\alpha\in(-1,1)$ we can have a plethora of such processes. 

\begin{rmk}
	Since the singularity $\sing$ is at finite distance from any point of $M$,
	one can interpret a Markov process that is explosive at $0$ as if $\sing$ were absorbing the heat.
\end{rmk}

As a corollary of \ref{thm:markov}, we get the following answer to \ref{q3}.
\begin{center}
\begin{tabular}{c|c}
$\alpha \le -1$ & The heat is not absorbed by $\sing$ \\ \hline
$-1< \alpha < 1$ & The Friedrichs extension is absorbed by $\sing$, \\
 & while the Neumann and the \bridge extensions are not. \\ \hline
$1\le \alpha $ & The heat is absorbed by $\sing$ \\ \hline
\end{tabular}
\end{center}

\subsection{Structure of the paper} % (fold)
\label{sub:structure_of_the_paper}

The structure of the paper is the following. 
In Section~\ref{sec:sae}, after some preliminaries regarding self-adjointness, we analyze in detail the Fourier components of the Laplace-Beltrami operator on $M_\alpha$, proving Theorems~\ref{thm:sa} and \ref{thm:sae}.
We conclude this section with a description of the maximal domain of the Laplace-Beltrami operator in terms of the Sobolev spaces on $M_\alpha$, contained in Proposition~\ref{prop:dmax}.

Then, in Section~\ref{sec:bilinear}, we introduce and discuss the concepts of Markovianity, stochastic completeness and recurrence through the potential theory of Dirichlet forms.
After this, we study the Markov uniqueness of $\delcomp$ and characterize the domains of the Friedrichs, Neumann and \bridge extensions (Propositions~\ref{prop:onlyfried} and \ref{prop:domains}).
Then, we define stochastic completeness and recurrence at $0$ and at $\infty$, and, in Proposition~\ref{prop:zero}, we discuss how these concepts behave if the $k=0$ Fourier component of the self-adjoint extension is itself self-adjoint.
In particular, we show that the Markovianity of such an operator $A$ implies the Markovianity of its first Fourier component $\widehat A_0$, and that the stochastic completeness (resp. recurrence) at $0$ (resp. at $\infty$) of $A$ and $\widehat A_0$ are equivalent.
Then, in Proposition~\ref{prop:unione} we prove that stochastic completeness or recurrence are equivalent to stochastically completeness or recurrence both at $0$ and at $\infty$.
Finally, we prove Theorem~\ref{thm:markov}.

The proofs of Lemmata~\ref{lem:mcil} and \ref{lem:mcono} are contained in Appendix~\ref{sec:geom}, while in Appendix~\ref{sec:surfaces} we justify the geometric interpretation of Figure~\ref{fig:int-geo}.
Appendix~\ref{app:schroedinger} contains the description of the complex self-adjoint extension of $\delfz$.

\section{Self-adjoint extensions}
\label{sec:sae}

\subsection{Preliminaries} % (fold)

Let $\hilb$ be an Hilbert space with scalar product $(\cdot,\cdot)_\hilb$ and norm $\|\cdot\|_\hilb=\sqrt{(\cdot,\cdot)_\hilb}$. 
Given an operator $A$ on $\hilb$ we will denote its domain by $\dom A$ and its adjoint by $A^*$.
Namely, if $A$ is densely defined, $\dom {A^*}$ is the set of $\varphi \in \hilb$ such that there exists $\eta\in\hilb$ with
	$(A\psi,\varphi)_\hilb=(\psi,\eta)_\hilb, \text{ for all } \psi\in\dom A.$
For each such $\varphi$, we define $A^*\varphi = \eta$.

Given two operators $A,B$, we say that $B$ is an extension of $A$ (and we will write $A\subset B$) if $\dom A\subset \dom B$ and $A\psi=B\psi$ for any $\psi\in\dom A$.
A densely defined operator $A$ is \emph{symmetric} if $A\subset A^*$, i.e., if
\[
	(A\psi,\varphi)_\hilb = (\psi,A\varphi)_\hilb, \qquad \text{for all } \psi\in\dom A.
\]
A densely defined operator $A$  is \em self-adjoint \em if $A=A^*$, that is if it is symmetric and $\dom A=\dom {A^*}$, and is \em non-positive \em if $(A\psi,\psi)\le 0$ for any $\psi\in\dom A$. 

Given a strongly continuous group $\{T_t\}_{t\in\R}$ (resp. semigroup $\{T_t\}_{t\ge0}$), its generator $A$ is defined as
\begin{equation*}
	A u = \lim_{t\rightarrow0} \frac {T_t u - u} {t}, \quad 
	\dom A = \{ u \in\hilb\mid Au \text{ exists as a strong limit}\}.
\end{equation*}
When a group (resp. semigroup) has generator $A$, we will write it as $\{e^{tA}\}_{t\in\R}$ (resp. $\{e^{tA}\}_{t\ge0}$).
Then, by definition, $u(t)= e^{tA}u_0$ is the solution of the functional equation 
\[
	\left\{
	\begin{array}{l}
		\partial_t u(t) = Au(t) \\
		u(0)= u_0\in\hilb.
	\end{array}	
	\right.
\]
Recall the following classical result.
\begin{thm}\label{thm:stone}
    Let $\hilb$ be an Hilbert space, then
    \begin{enumerate}
    	\item \em(Stone's theorem)\em  The map $A\mapsto \{e^{ i tA}\}_{t\in\R}$ induces a one-to-one correspondence 
    		\[
    			A \text{ self-adjoint operator } \Longleftrightarrow \{e^{ i tA}\}_{t\in\R} \text{  strongly continuous unitary group};
    		\]
    	\item The map $A\mapsto \{e^{tA}\}_{t\ge 0}$ induces a one-to-one correspondence 
    		\begin{equation*}
   				\begin{split}
    			A \text{ non-positive self-adjoint operator } \Longleftrightarrow 
    			\{e^{tA}\}_{t\ge 0} &\text{ strongly continuous contraction}\\
    			& \text{semigroup of self-adjoint operators};	
   				\end{split}
    		\end{equation*}
    \end{enumerate}
\end{thm}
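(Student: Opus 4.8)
The plan is to obtain both bijections from the spectral theorem for (possibly unbounded) self-adjoint operators together with the bounded Borel functional calculus: the forward maps $A\mapsto\{e^{itA}\}_{t\in\R}$ and $A\mapsto\{e^{tA}\}_{t\ge0}$ are to be defined by functional calculus, and the inverse maps are to recover $A$ from the (semi)group as its infinitesimal generator.

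\textbf{From the operator to the (semi)group.} Given $A$ self-adjoint with spectral resolution $A=\int_\R\lambda\,dE(\lambda)$, I would set $U_t=\int_\R e^{it\lambda}\,dE(\lambda)$ for part (1). Each $U_t$ is unitary since $|e^{it\lambda}|\equiv1$; the group law $U_{t+s}=U_tU_s$ is multiplicativity of the functional calculus; strong continuity follows from
\[
	\|U_tu-u\|_\hilb^2=\int_\R|e^{it\lambda}-1|^2\,d\|E(\lambda)u\|_\hilb^2\longrightarrow0\quad\text{as }t\to0
\]
by dominated convergence; and differentiating on $\dom A$ identifies the generator of $\{U_t\}$ with $iA$. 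For part (2), $A\le0$ means $E$ is carried by $(-\infty,0]$, so $\lambda\mapsto e^{t\lambda}$ is bounded by $1$ there for $t\ge0$; then $T_t=\int_{(-\infty,0]}e^{t\lambda}\,dE(\lambda)$ is a family of self-adjoint contractions, and the same computations give the semigroup law, strong continuity on $[0,\infty)$, and generator $A$.

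\textbf{From the (semi)group to the operator.} Conversely, for a strongly continuous unitary group $\{U_t\}_{t\in\R}$ I would take its generator $B$, $Bu=\lim_{t\to0}(U_tu-u)/t$. Then $\dom B$ is dense, since the smoothed vectors $\int_\R\phi(t)U_tu\,dt$ with $\phi\in\cc\R$ lie in it and approximate $u$ along an approximate identity; unitarity gives $(U_tu,v)_\hilb=(u,U_{-t}v)_\hilb$, and differentiating at $0$ shows $B$ is skew-symmetric; and $1-B$, $1+B$ are surjective, with inverses the absolutely convergent integrals $\int_0^\infty e^{-t}U_t\,dt$ and $\int_0^\infty e^{-t}U_{-t}\,dt$. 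Hence $B$ is skew-adjoint, $A:=B/i$ is self-adjoint, and by the first part $\{U_t\}=\{e^{itA}\}$; uniqueness of $A$ is immediate, since two self-adjoint operators generating the same group have the same resolvent. Part (2) goes the same way through the Hille--Yosida description of the generator $A$ of a strongly continuous contraction semigroup $\{T_t\}_{t\ge0}$, with $(\lambda-A)^{-1}=\int_0^\infty e^{-\lambda t}T_t\,dt$ for $\lambda>0$: self-adjointness of each $T_t$ makes $A$ symmetric, hence self-adjoint because $(0,\infty)$ lies in its resolvent set, and since $(T_tu,u)_\hilb\le\|u\|_\hilb^2$ one gets $(Au,u)_\hilb=\lim_{t\downarrow0}t^{-1}\big((T_tu,u)_\hilb-\|u\|_\hilb^2\big)\le0$, so $A$ is non-positive.

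\textbf{Where the work is.} There is no new difficulty here: this is the classical Stone theorem together with Hille--Yosida theory, merely recalled. If written out in full, the only genuinely delicate point would be the reverse direction — verifying that the Laplace-transform formulas really invert $1\pm B$ (resp.\ $\lambda-A$ for $\lambda>0$) and land in the domain of the generator — everything else being routine functional-calculus bookkeeping. I would also flag that the correspondence in (2), as stated, is implicitly about semigroups of \emph{self-adjoint contractions}: for self-adjoint $A$ one has $\|e^{tA}\|\le1$ for all $t\ge0$ precisely when $A\le0$, and it is this bound that makes the map bijective.
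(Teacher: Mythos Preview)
Your sketch is correct and, as you yourself note, this is the classical Stone/Hille--Yosida package. The paper does not prove this theorem at all: it is introduced with ``Recall the following classical result'' and stated without proof as preliminary background, so there is nothing to compare your argument against. Your remark that part (2) tacitly assumes the semigroup consists of self-adjoint contractions is a fair reading of the intended statement.
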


For any Riemannian manifold ${\mathcal M}$ with Riemannian volume $dV$, Green's identity implies that  $\Delta|_{\cc {\mathcal M}}$ is symmetric.
However, from the same formula, follows that 
\[
	\dom {\Delta|_{\cc {\mathcal M}} \,^*} = \{ u\in L^2({\mathcal M},dV)\mid \Delta u\in L^2({\mathcal M},dV) \} \varsupsetneqq \cc {\mathcal M},
\]
where $\Delta u$ is intended in the sense of distributions.
Hence, $\Delta$ is not self-adjoint on $\cc {\mathcal M}$.

Since, by Theorem~\ref{thm:stone}, in order to have a well defined solution of the Schr\"odinger equation the Laplace-Beltrami operator has to be self-adjoint, we have to extend its domain in order to satisfy this property. 
For the heat equation, on the other hand, we will need also to worry about the fact that it stays non-positive while doing so.
We will tackle this problem in the next section, where we will require the stronger property of being Markovian (i.e., that the evolution preserves both the non-negativity and the boundedness).

The simplest extension one can build for a symmetric operator $A$ is the closure $\bar A$. 
Namely, $\dom {\bar A}$ is the closure of $\dom A$ with respect to the graph norm $\|\cdot\|_A= \|A\cdot\|_\hilb+\|\cdot\|_\hilb$, and $\bar A \psi = \lim_{n\rightarrow +\infty} A\psi_n$ where $\{\psi_n\}_{n\in\N}\subset \dom A$ is such that $\psi_n\rightarrow \psi$ and $\{A\psi_n\}_n$ is a Cauchy sequence in $\hilb$.
Observe that $A\subset \bar A\subset A^*$, and hence any self-adjoint extension $B$ of $A$ will be such that ${\bar A}\subset  B\subset  {A^*}$. 
For this reason, we let $\dmin(A)=\dom {\bar A}$ and $\dmax (A) = \dom {A^*}$.
Moreover, from this fact follows that any self-adjoint extension $B$ will be defined as $B\psi=A^*\psi$ for $\psi\in \dom B$, so we are only concerned in specifying the domain of $B$.
The simplest case is the following.

 \begin{defn}
    A symmetric operator is called \em essentially self-adjoint \em if its closure is self-adjoint.
 \end{defn}

It is a well known fact, dating as far back as the series of papers \cite{gaffney1,gaffney2}, that the Laplace-Beltrami operator is essentially self-adjoint on any complete Riemannian manifold.
On the other hand, it is clear that if the manifold is incomplete this is no more the case, in general (see \cite{masamune, grigoryanmasamune}). 
It suffices, for example, to consider the case of an open set $\Omega\subset\R^n$, where to have the self-adjointness of the Laplacian, we have to pose boundary conditions (Dirichlet, Neumann or a mixture of the two).
In our case, Theorem~\ref{thm:sa} will give an answer to the problem of whether $\delcomp$ is essentially self-adjoint or not.

\subsection{Fourier decomposition and self-adjoint extensions of Sturm-Liouville operators}

There exist various theories allowing to classify the self-adjoint extensions of symmetric operators. 
We will use some tools from the Neumann theory (see \cite{reedsimon}) and, when dealing with one-dimensional problems, from the Sturm-Liouville theory.
Let $\hilb$ be a complex Hilbert space and $i$ be the imaginary unit.
The \emph{deficiency indexes} of $A$ are then defined as
\[
	n_+(A)=\dim \ker (A+ i), \qquad n_-(A)=\dim \ker (A- i).
\]
Then $A$ admits self-adjoint extensions if and only if $n_+(A)=n_-(A)$, and they are in one to one correspondence with the set of partial isometries between $\ker(A- i)$ and $\ker(A +  i)$.
Obviously, $A$ is essentially self-adjoint if and only if $n_+(A)=n_-(A)=0$.

Following \cite{zettl}, we say that a self-adjoint extension $B$ of $A$ in $\hilb$ is a \emph{real self-adjoint extension} if $u\in\dom B$ implies that $\overline u \in \dom B$ and $B(\overline u)=\overline{Bu}$.
When $\hilb=\ldue$, i.e.  the real Hilbert space of square-integrable real-valued functions on $M$, the self-adjoint extensions of $A$ in $\ldue$ are the restrictions to this space of the real self-adjoint extensions of $A$ in $L^2_\C(M,\misura)$, i.e. the complex Hilbert space of square-integrable complex-valued functions.
This proves that $A$ is essentially self-adjoint in $\ldue$ if and only if it is essentially self-adjoint in $L^2_\C(M,\misura)$.
Hence, when speaking of the deficiency indexes of an operator acting on $\ldue$, we will implicitly compute them on $L^2_\C(M,\misura)$.

We start by proving the following general proposition that will allow us to study only the Fourier components of $\delcomp$, in order to understand its essential self-adjointness.

 \begin{prop}\label{prop:fourier}
 	Let $A_k$ be symmetric on $\dom{A_k}\subset H_k$, for any $k\in\Z$ and let $\dom A$ be the set of vectors in $\hilb=\bigoplus_{k\in \Z} H_k$ of the form $\psi=(\psi_1,\psi_2,\ldots)$, where $\psi_k\in \dom {A_k}$ and all but finitely many of them are zero.
 	Then $A=\sum_{k\in\Z}A_k$ is symmetric on $\dom A$, $n_+(A)=\sum_{k\in \Z} n_+(A_k)$ and $n_-(A)=\sum_{k\in \Z} n_-(A_k)$.
 \end{prop}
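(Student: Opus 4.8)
The plan is to reduce the statement to a routine bookkeeping computation once the right identifications are made. First I would observe that symmetry of $A$ on $\dom A$ is immediate: given $\psi=(\psi_1,\psi_2,\ldots)$ and $\varphi=(\varphi_1,\varphi_2,\ldots)$ in $\dom A$, both have only finitely many nonzero components, so $(A\psi,\varphi)_\hilb=\sum_{k\in\Z}(A_k\psi_k,\varphi_k)_{H_k}=\sum_{k\in\Z}(\psi_k,A_k\varphi_k)_{H_k}=(\psi,A\varphi)_\hilb$, the sums being finite and the interchange with the inner product on $\hilb=\bigoplus_k H_k$ being justified because $\psi,\varphi\in\dom A$. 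Density of $\dom A$ in $\hilb$ is clear since it contains all finite sums of vectors from the dense domains $\dom{A_k}$, so $A^*$ is well defined.

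The heart of the matter is the identification of $\ker(A^*\pm i)$ with $\bigoplus_k \ker(A_k^*\pm i)$ (orthogonal Hilbert-space direct sum, not the algebraic one). I would argue this in two directions. For the easy inclusion, if $\eta=(\eta_k)_k$ with $\eta_k\in\ker(A_k^*\mp i)$ for each $k$ and $\sum_k\|\eta_k\|^2<\infty$, then for any $\psi\in\dom A$ (finitely supported) one checks $(A\psi,\eta)_\hilb=\sum_k(A_k\psi_k,\eta_k)=\sum_k(\psi_k,A_k^*\eta_k)=\sum_k(\psi_k,\pm i\,\eta_k)=(\psi,\pm i\,\eta)_\hilb$, so $\eta\in\dom{A^*}$ and $A^*\eta=\pm i\,\eta$. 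For the reverse inclusion, take $\eta\in\ker(A^*\mp i)$ and write $\eta=(\eta_k)_k$ with $\eta_k$ its component in $H_k$. Testing the defining identity $(A\psi,\eta)_\hilb=(\psi,\pm i\,\eta)_\hilb$ against vectors $\psi$ supported on a single index $k$ (i.e. $\psi=(0,\ldots,\psi_k,\ldots,0)$ with $\psi_k\in\dom{A_k}$) yields $(A_k\psi_k,\eta_k)_{H_k}=(\psi_k,\pm i\,\eta_k)_{H_k}$ for all $\psi_k\in\dom{A_k}$, which says exactly $\eta_k\in\dom{A_k^*}$ and $A_k^*\eta_k=\pm i\,\eta_k$, i.e. $\eta_k\in\ker(A_k^*\mp i)$. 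Since $\eta\in\hilb$ we automatically have $\sum_k\|\eta_k\|^2<\infty$, so $\eta$ lies in the Hilbert-space direct sum $\bigoplus_k\ker(A_k^*\mp i)$.

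Combining the two inclusions gives an orthogonal decomposition $\ker(A^*- i)=\bigoplus_k\ker(A_k^*- i)$ and likewise for $+i$; taking dimensions yields $n_-(A)=\dim\ker(A^*- i)=\sum_k\dim\ker(A_k^*- i)=\sum_k n_-(A_k)$ and symmetrically $n_+(A)=\sum_k n_+(A_k)$, where the identity is understood in $[0,+\infty]$ so there is no issue if infinitely many terms are nonzero or if some are infinite. I expect the only subtle point to be making sure one works with $A^*$ rather than $\overline A$ and that the deficiency spaces are taken as genuine (closed) Hilbert subspaces so that the direct sum in the displayed identity is the Hilbert-space completion; with the single-index test functions this causes no real trouble, since finitely supported vectors are a core for $A$ by definition of $\dom A$.
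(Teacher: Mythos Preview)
Your proof is correct and follows essentially the same route as the paper: symmetry via the finite-sum computation, then a componentwise decomposition of the deficiency subspaces. Your version is in fact more careful than the paper's, which writes $\ker(A\pm i)$ where it should really be working with $A^*$; your explicit two-inclusion argument using single-index test vectors makes this point rigorous.
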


 \begin{proof}
 	Let $\psi=(\psi_1,\psi_2,\ldots)\in \dom A$. Then, by symmetry of the $A_k$'s and the fact that only finitely many $\psi_k$ are nonzero, it holds
 	\[
 		(Au,v)_\hilb = \sum_{k\in\Z} (A_ku_k,v_k)_{H_k}=\sum_{k\in\Z} (u_k,A_kv_k)_{H_k}=(u,Av)_\hilb.
 	\]
 	This proves the symmetry of $A$.
 	
  	Observe now that $\psi=(\psi_1,\psi_2,\ldots)\in \ker (A\pm i)$ if and only if $0=A\psi\pm i = (A_1\psi_1\pm i,A_2\psi_2\pm i,\ldots)$. 
  	This clearly implies that $\dim \ker(A\pm i)=\sum_{k\in\Z}\dim \ker(A_k\pm i)$, completing the proof.
 \end{proof}

Since the Fourier components $\delfk$ defined in \eqref{eq:delfk} are second order differential operators of one variable, they can be studied via Sturm-Liouville theory.
Let $J=(a_1,b_1)\cup(a_2,b_2)$, $-\infty\le a_1 < b_1\le a_2 < b_2\le +\infty$, and for $1/p,\,q,\,w\in L^1_{\text{loc}}(J)$
consider the Sturm-Liouville operator on $L^2(J,w(x)dx)$ defined by
\begin{equation}\label{eq:slo}
	Au =  \frac 1 w\bigg( -\partial_x(p \,\partial_x u)+  q u \bigg).
\end{equation}
Letting $J=\R\setminus\{0\}$, $w(x)=|x|^{-\alpha}$, $p(x)=-|x|^{-\alpha}$, and $q(x)=-k^2|x|^\alpha$, we recover $\widehat \Delta_k$.
In the following  we will heavily rely on \cite[Chapter~13]{zettl}, where self-adjointess of Sturm-Liouville operators defined on a disjoint union of two connected intervals is studied. 

For a Sturm-Liouville operator the maximal domain can be explicitly characterized as
\begin{equation}
	\label{eq:dmaxsturm}
	\dmax(A)=\{ u:J\to \R\mid u,\, p\,\partial_x u \text{ are absolutely continuous on } J,\text{ and } u,\, Au \in L^2(J,w(x)dx)\}.
\end{equation}
In (\ref{eq:patching}), at the end of the section, we will give a precise characterization of the minimal domain.

\begin{defn}
The endpoint (finite or infinite) $a_1$, is limit-circle if all solutions of the equation $Au=0$ are in $L^2((a_1,d),w(x)dx)$ for some (and hence any) $d\in (a_1,b_1)$.
Otherwise $a_1$ is limit-point.

Analogous definitions can be given for $b_1$, $a_2$ and $b_2$.
\end{defn}

Let us define the Lagrange parenthesis of $u,v:J\to\R$ associated to \eqref{eq:slo} as the bilinear antisymmetric form
\[
	[u,v]=u \, p\, \partial_x v - v\,p\,	 \partial_x u.
\]
By \cite[(10.4.41)]{zettl} or \cite[Lemma~3.2]{weyl}, we have that $[u,v](d)$ exists and is finite for any $u,v\in\dmax(\delfk)$ and any endpoint $d$ of $J$.

\begin{defn}\label{defn:regular}
    The Sturm-Liouville operator \eqref{eq:slo} is \em regular \em at the endpoint $a_1$ if for some (and hence any) $d\in(a_1,b_1)$, it holds
    \[
    	\frac 1 p,\, q,\, w \in L^1((a_1,d)).
    \] 
    A similar definition holds for $b_1,\,a_2,\, b_2$.
\end{defn}

In particular, for any $k\in\Z$, the operator $\delfk$ is never regular at the endpoints $+\infty$ and $-\infty$, and is regular at $0^+$ and $0^-$ if and only if $\alpha\in(-1,1)$.

We will need the following theorem, that we state only for real extensions and in the cases we will use.

\begin{thm}[Theorem 13.3.1 in \cite{zettl}]\label{thm:zettl}
	Let $A$ be the Sturm-Liouville operator on $L^2(J,w(x)dx)$ defined in \eqref{eq:slo}.
	Then
	\[
		n_+(A)=n_-(A)=\#\{ \text{limit-circle endpoints of } J\}.
	\]

	Assume now that $n_+(A)=n_-(A)=2$, and let $a$ and $b$ be the two limit-circle endpoints of $J$. 
	Moreover, let $\phi_1,\phi_2\in \dmax(A)$ be linearly independent modulo $\dmin(A)$ and normalized by 
	$
		[\phi_1,\phi_2](a)=[\phi_1,\phi_2](b)=1.
	$
	Then, $B$ is a self-adjoint extension of $A$ over $L^2(J,w(x)dx)$ if and only if 
	$Bu=A^*u$, for any $u\in\dom B$, and one of the following holds
	\begin{enumerate}
		\item \emph{Disjoint dynamics:} 
			there exists $c_+,c_-\in(-\infty,+\infty]$ such that $u\in\dom B$ if and only if
			\begin{gather*}
				[u,\phi_1](0^+)=c_{+}[u,\phi_2](a)\quad \text{and} \quad
				[u,\phi_1](0^-)=d_{+}[u,\phi_2](b).
			\end{gather*}
		\item \emph{Mixed dynamics:} 
			there exist $K\in SL_2(\R)$ such that $u\in\dom B$ if and only if
			\[
				U(bu)= K \,U(a), \qquad\text{ for } 
					U(x)= \left( \begin{array}{c} {[u,\phi_1](x)}\\ {[u,\phi_2](x)}\\ \end{array} \right).
			\]
	\end{enumerate}
\end{thm}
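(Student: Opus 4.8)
The plan is to reduce Theorem~\ref{thm:zettl} to its ingredients: the deficiency-index count and the description of self-adjoint extensions via Lagrange-parenthesis boundary conditions, both of which are classical (von Neumann theory plus the Sturm--Liouville Glazman--Krein--Naimark (GKN) theorem). Since the paper states the theorem is quoted from \cite[Theorem~13.3.1]{zettl}, strictly speaking one may just cite it; but to give a self-contained argument I would proceed as follows.

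\textbf{Step 1: deficiency indices.}
First I would recall that on each maximal subinterval $(a_i,b_i)$ the operator $A$ has deficiency indices $n_\pm = 1 + (\text{number of limit-circle endpoints among } a_i,b_i)$ when counted ``minus two for two limit-point endpoints'', i.e. the standard dichotomy: a regular/limit-circle endpoint contributes one to each deficiency index, a limit-point endpoint contributes zero. Summing over the two components $(a_1,b_1)$ and $(a_2,b_2)$ via Proposition~\ref{prop:fourier} (applied with the two components in place of the Fourier pieces) gives $n_+(A)=n_-(A)=\#\{\text{limit-circle endpoints of }J\}$. This is a direct consequence of Weyl's limit-point/limit-circle theory; I would cite \cite{weyl} or \cite{zettl} for the single-interval statement and invoke additivity of deficiency indices under orthogonal direct sums. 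The equality $n_+=n_-$ is automatic here because $A$ has a real coefficient structure (it commutes with complex conjugation), so conjugation maps $\ker(A-i)$ onto $\ker(A+i)$.

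\textbf{Step 2: the GKN boundary-condition description when $n_\pm=2$.}
Assume now both limit-circle endpoints $a,b$ (one in each component, since $J$ is disconnected with two components and two limit-circle endpoints). By the GKN theorem, the self-adjoint extensions of $A$ correspond to two-dimensional subspaces $S$ of $\dmax(A)/\dmin(A)$ that are Lagrangian for the symplectic form induced by the boundary form
\[
	\langle u,v\rangle_\partial = [u,v](b) - [u,v](a),\qquad u,v\in\dmax(A),
\]
where I use that for $u,v\in\dmin(A)$ all Lagrange brackets at both endpoints vanish, and that $[u,v]$ at a limit-point endpoint vanishes identically on $\dmax(A)$ (so only the limit-circle endpoints $a,b$ enter). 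Fixing $\phi_1,\phi_2\in\dmax(A)$ linearly independent modulo $\dmin(A)$, normalized by $[\phi_1,\phi_2](a)=[\phi_1,\phi_2](b)=1$, the map $u\mapsto U(x)=\big([u,\phi_1](x),[u,\phi_2](x)\big)^T$ for $x\in\{a,b\}$ gives a linear isomorphism $\dmax(A)/\dmin(A)\xrightarrow{\ \sim\ }\R^2\oplus\R^2$ (this uses that $[u,\phi_j](a),[u,\phi_j](b)$ are finite, by the cited \cite[(10.4.41)]{zettl} / \cite[Lemma~3.2]{weyl}, and that a function with all four brackets zero lies in $\dmin(A)$). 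Under this isomorphism the boundary form becomes the standard symplectic form $\omega\big((U_a,U_b),(V_a,V_b)\big) = \omega_0(U_b,V_b) - \omega_0(U_a,U_b)$ with $\omega_0$ the area form on $\R^2$; the normalization of $\phi_1,\phi_2$ is exactly what makes this identification isometric for the symplectic structures.

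\textbf{Step 3: classifying Lagrangian subspaces.}
It remains to enumerate the Lagrangian $2$-planes $L\subset \R^2_a\oplus\R^2_b$ for $\omega = \omega_0|_b \ominus \omega_0|_a$. A standard linear-algebra argument splits these into two families. Either $L$ projects onto all of $\R^2_a$ (equivalently onto all of $\R^2_b$), in which case $L$ is the graph of a linear map $K\colon\R^2_a\to\R^2_b$, and the Lagrangian condition $\omega_0(Ku,Kv)=\omega_0(u,v)$ for all $u,v$ forces $\det K=1$, i.e. $K\in SL_2(\R)$ — this is the \emph{mixed dynamics} case with $U(b)=K\,U(a)$. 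Or the projection of $L$ onto one factor is one-dimensional, and then, because $\omega$ is a direct sum, $L$ must split as $L = \ell_a\oplus\ell_b$ with $\ell_a\subset\R^2_a$, $\ell_b\subset\R^2_b$ each a line (automatically isotropic in dimension two); writing $\ell_\pm$ as $\{[u,\phi_1]=c_\pm[u,\phi_2]\}$ with $c_\pm\in(-\infty,+\infty]$ (the value $+\infty$ corresponding to the line $[u,\phi_2]=0$) yields the \emph{disjoint dynamics} case. Conversely every such subspace is Lagrangian, so via GKN every one gives a self-adjoint extension, and $Bu=A^*u$ on its domain because $B\subset A^*$ always. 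I would note the statement's ``$[u,\phi_1](0^+)$'' etc. is just the specialization to $J=\R\setminus\{0\}$ with $a=0^-$ or $0^+$; the displayed ``$d_+$'' should read $c_-$.

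\textbf{Main obstacle.}
The routine but genuinely technical point is Step~2: verifying that $u\mapsto U(a)\oplus U(b)$ descends to an \emph{isomorphism} $\dmax/\dmin\to\R^4$ and intertwines the boundary form with the standard symplectic form. Surjectivity requires producing, for prescribed boundary data, a $\dmax$-function realizing it (partition-of-unity gluing of local limit-circle solutions near each limit-circle endpoint, e.g. using functions like $\phi_D,\phi_N$ as in the excerpt); injectivity requires the GKN lemma that vanishing of all brackets against a fundamental system places $u$ in $\dmin$. Both are standard in Sturm--Liouville theory (this is precisely the content of \cite{zettl}), so in the write-up I would state them as cited facts rather than reprove them, and spend the actual effort only on the clean symplectic-linear-algebra classification in Step~3.
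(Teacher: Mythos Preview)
The paper does not prove this statement at all: it is quoted verbatim as Theorem~13.3.1 of \cite{zettl} and used as a black box. So there is no ``paper's own proof'' to compare against; your acknowledgement of this at the outset is exactly right.

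That said, your sketch is a correct and standard outline of how the cited result is actually established. Step~1 is the classical Weyl alternative plus additivity of deficiency indices under orthogonal sums; Step~2 is the GKN reduction of self-adjoint extensions to Lagrangian subspaces of $\dmax/\dmin$ equipped with the boundary symplectic form; and Step~3 is the elementary classification of Lagrangian $2$-planes in $(\R^2,\omega_0)\ominus(\R^2,\omega_0)$. One small point worth tightening in Step~3: when the projection of $L$ onto $\R^2_a$ is one-dimensional you should argue that $L$ genuinely splits. Writing $L=\spn\{(v,w),(0,w')\}$ with $v\neq0$, $w'\neq0$, the isotropy condition $\omega_0(w,w')=\omega_0(v,0)=0$ forces $w\parallel w'$, whence $L=\spn\{v\}\oplus\spn\{w'\}$; and the case of zero-dimensional projection is excluded since $\{0\}\oplus\R^2_b$ is not isotropic. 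With that detail filled in, the argument is complete. Your observation that the displayed ``$d_+$'' is a typo for $c_-$ (and likewise ``$U(bu)$'' for $U(b)$) is also correct.
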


\begin{rmk}\label{rmk:dmax}
Let $\phi_1^a$ and $\phi_2^a$ be, respectively, the functions $\phi_1$ and $\phi_2$ of the above theorem, multiplied by a cutoff function $\eta:\overline J\to[0,1]$  supported in a (right or left) neighborhood of $a$ in $J$ and such that $\eta(a)=1$ and $\eta'(a)=0$.
Let $\phi_1^b$ and $\phi_2^b$ be defined analogously.
Then, from \eqref{eq:patching}, follows that we can write
\begin{equation}\label{eq:dmax2}
	\dmax(A)=\dmin(A)+\spn\{\phi_1^a,\phi_1^b,\phi_2^a,\phi_2^b\}.
\end{equation}
\end{rmk}

The following lemma classifies the end-points of $\R\setminus\{0\}$ with respect to the Fourier components of $\delcomp$.

\begin{lem} \label{lem:limitpoint}
    Consider the Sturm-Liouville operator $\widehat\Delta_k$ on $\R\setminus\{0\}$.
    Then, for any $k\in \Z$ the endpoints $+\infty$ and $-\infty$ are limit-point.
    On the other hand, regarding $0^+$ and $0^-$ the following holds.
  	\begin{enumerate}
  	    \item If $\alpha\le -3$ or if $\alpha\ge 1$, then they are limit-point for any $k\in\Z$;
  	    \item if $-3<\alpha\le -1$, then they are limit-circle if $k=0$ and limit-point otherwise;
  	    \item if $-1<\alpha<1$, then they are limit-circle for any $k\in\Z$.
  	\end{enumerate}
\end{lem}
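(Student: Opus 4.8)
The strategy is to analyze the ODE $\widehat\Delta_k u = 0$ near each endpoint of $J = \R\setminus\{0\}$ and decide, by inspecting the asymptotics of a fundamental system of solutions, whether both solutions are square-integrable with respect to $w(x)\,dx = |x|^{-\alpha}\,dx$. First I would recall that, by Weyl's classical dichotomy, once we know the behaviour of two linearly independent solutions of $\widehat\Delta_k u = 0$ near an endpoint $d$, the endpoint is limit-circle precisely when both solutions lie in $L^2$ near $d$, and limit-point otherwise; moreover, if one solution fails to be in $L^2$ the endpoint is automatically limit-point, so it suffices to exhibit one non-$L^2$ solution in that case.

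\textbf{The endpoints $\pm\infty$.} Here I would treat $k=0$ and $k\neq 0$ separately. For $k=0$, the equation $\partial_x^2 u - \frac\alpha x \partial_x u = 0$ has solutions $u \equiv 1$ and $u(x) = \int^x |t|^\alpha\,dt \sim |x|^{\alpha+1}$ (or $\log|x|$ if $\alpha=-1$); a direct computation shows the constant solution $u\equiv 1$ is never in $L^2(( d,\infty),|x|^{-\alpha}dx)$ — indeed $\int^\infty |x|^{-\alpha}\,dx = \infty$ for $\alpha\le 1$, while for $\alpha>1$ one checks the second solution $|x|^{\alpha+1}$ fails integrability. Either way at least one solution escapes $L^2$, so $\pm\infty$ is limit-point. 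For $k\neq 0$ the potential term $-|x|^{2\alpha}k^2$ dominates; when $\alpha\ge 0$ it blows up and a WKB / Liouville-transformation argument gives one exponentially growing solution, hence limit-point; when $\alpha<0$ the operator is a perturbation of the flat Laplacian at infinity and again one recovers the constant-type non-$L^2$ solution. In all cases $\pm\infty$ is limit-point.

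\textbf{The endpoints $0^\pm$.} This is the heart of the lemma and where the case split in $\alpha$ appears. I would first reduce to $k=0$ for the case $\alpha\le -1$: when $k\neq 0$ and $\alpha\le -1$, near $x=0$ the term $|x|^{2\alpha}k^2$ is singular of order $|x|^{2\alpha}$ with $2\alpha\le -2$, and this forces one solution to have non-$L^2$ growth (one can see this via the substitution to $\LBN$, turning the problem into a Bessel-type equation with a large-order singularity, whose irregular solution is not square-integrable against $|x|^{-\alpha}dx$) — so $0^\pm$ is limit-point for $k\neq 0$ whenever $\alpha\le -1$. For $k=0$ the equation is the Euler equation $\partial_x^2 u - \frac\alpha x\partial_x u = 0$ with solutions $u\equiv 1$ and $u = \phi_N^\pm \sim |x|^{\alpha+1}$ (resp. $\log|x|$ at $\alpha=-1$), which matches the definitions \eqref{def:phidn}. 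Now I compute $\int_0^d |x|^{-\alpha}\,dx$ and $\int_0^d |x|^{2\alpha+2}|x|^{-\alpha}\,dx = \int_0^d |x|^{\alpha+2}\,dx$: the first is finite iff $\alpha<1$, the second iff $\alpha>-3$; the $\log$ case at $\alpha=-1$ is finite. Hence for $k=0$ both solutions are in $L^2$ near $0$ iff $\alpha\in(-3,1)$, giving limit-circle there and limit-point for $\alpha\le -3$ and $\alpha\ge 1$; this yields case (2) and the $k=0$ part of case (3). Finally, for $\alpha\in(-1,1)$ and arbitrary $k$, I note the operator is \emph{regular} at $0^\pm$ (already observed after Definition~\ref{defn:regular}, since $1/p, q, w\in L^1((0,d))$ when $\alpha\in(-1,1)$), and regular endpoints are automatically limit-circle — this disposes of all $k$ in case (3) at once, and in particular is consistent with the $k=0$ computation.

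\textbf{Main obstacle.} The routine part is the explicit $k=0$ integrability computation; the genuinely delicate step is the $k\neq 0$, $\alpha\le -1$ (and symmetrically $\alpha\ge 1$) analysis near $0$, where one must control solutions of a second-order ODE with a strongly singular potential $|x|^{2\alpha}$ and verify that the "bad" solution genuinely fails $L^2(|x|^{-\alpha}dx)$. I expect to handle this cleanly by passing through the unitary transformation to $\LBN$, reducing to a modified-Bessel equation in a suitable variable and quoting the standard asymptotics of its two solutions; the non-integrability of the irregular one is then a one-line estimate. The case $\alpha\ge 1$ is entirely analogous to $\alpha\le -3$ for $k=0$ and to the WKB argument for $k\neq 0$.
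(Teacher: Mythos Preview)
Your overall strategy is sound and your $k=0$ computation matches the paper's exactly, but you have overlooked the key simplification that makes the $k\neq 0$ case elementary: the equation $\widehat\Delta_k u=0$ on $(0,\infty)$ admits \emph{explicit closed-form solutions}
\[
u_\pm(x)=\exp\!\Big(\pm\frac{k\,x^{1+\alpha}}{1+\alpha}\Big)\qquad(\alpha\neq -1),\qquad
u_\pm(x)=x^{\pm k}\qquad(\alpha=-1),
\]
as one checks by direct differentiation. The paper simply writes these down and reads off the $L^2(|x|^{-\alpha}dx)$ behaviour near $0$ and near $\infty$ from the obvious asymptotics of $x^{1+\alpha}$. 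What you flag as the ``genuinely delicate step'' and propose to attack via the unitary map to $\LBN$, Bessel reductions, and WKB is therefore not an obstacle at all; it is a two-line inspection once the explicit solutions are in hand. Your regularity shortcut for $\alpha\in(-1,1)$ is correct and pleasant, but the paper does not need it either, since the explicit $u_\pm$ are visibly bounded and nonzero near $0$ in that range.

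A secondary point: your treatment of $\pm\infty$ for $k\neq 0$ and $\alpha<0$ (``perturbation of the flat Laplacian, constant-type non-$L^2$ solution'') is too vague as stated and would need work to make rigorous; with the explicit $u_\pm$ this becomes immediate, since for $\alpha<-1$ both solutions tend to $1$ at infinity while the weight $|x|^{-\alpha}$ blows up, and for $-1\le\alpha<0$ one of them grows exponentially.
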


Before the proof, we observe that, since $[u,v](d)=0$ for any limit-point end-point $d$, by the Patching Lemma \cite[Lemma~10.4.1]{zettl}  and \cite[Lemma~13.3.1]{zettl}, Lemma~\ref{lem:limitpoint} gives the following characterization of the minimal domain of $\widehat \Delta_k$,
\begin{equation} \label{eq:patching}
	\dmin(\widehat \Delta_k)=\left\{ u\in\dmax(\widehat \Delta_k)\mid 
			[u,v](0^+)=[u,v](0^-) = 0 \text{ for all } v\in\dmax(\widehat \Delta_k) \right\}.
\end{equation}

\begin{proof}[Proof of Lemma~\ref{lem:limitpoint}]
	By symmetry with respect to the origin of $\widehat\Delta_k$, it suffices to check only $0^+$ and $+\infty$.

	Let $k=0$, then for $\alpha\neq -1$ the equation $\widehat \Delta_0 u = u''-(\alpha /x) u' = 0$ has solutions $u_1(x)=1$ and $u_2(x)=x^{1+\alpha}$.
	Clearly, $u_1$ and $u_2$ are both in $L^2((0,1),|x|^{-\alpha}dx)$, i.e., $0^+$ is limit-circle, if and only if $\alpha\in (-3,1)$.
	On the other hand, $u_1$ and $u_2$ are never in $L^2((1,+\infty),|x|^{-\alpha}dx)$ simultaneously, and hence $+\infty$ is always limit-point.
	If $\alpha=-1$, the statement follows by the same argument applied to the solutions $u_1(x)=1$ and $u_2(x)=\log(x)$.
	
	Let now $k\neq 0$ and $\alpha\neq -1$. 
	Then $\widehat\Delta_k u=u''-(\alpha /x) u'-x^{2\alpha}k^2u=0$, $x>0$, has solutions $u_1(x)=\exp\left( \frac{kx^{1+\alpha}}{1+\alpha}\right)$ and $u_2(x)=\exp\left( -\frac{kx^{1+\alpha}}{1+\alpha}\right)$.
	If $\alpha>-1$, both $u_1$ and $u_2$ are bounded and nonzero near $x=0$, and either $u_1$ or $u_2$ has exponential growth as $x\rightarrow+\infty$. 
	Hence, in this case, $u_1,u_2\in L^2((0,1),|x|^{-\alpha})$ if and only if $\alpha< 1$, while $+\infty$ is always limit-point.
	On the other hand, if $\alpha<-1$, $u_1$ and $u_2$ are bounded away from zero as $x\rightarrow +\infty$ and one of them has exponential growth at $x=0$.
	Since the measure $|x|^{-\alpha}dx$ blows up at infinity, this implies that both $0^+$ and $+\infty$ are limit-point.
	Finally, the same holds for $\alpha=-1$, considering the solutions $u_1(x)=x^k$ and $u_2(x)=x^{-k}$.
\end{proof}

\subsection{Proofs of Theorem~\ref{thm:sa} and \ref{thm:sae}}
\label{sec:proof}

We are now able to classify the essential self-adjointness of the operator $\delcomp$.
 
\begin{proof}[Proof of Theorem~\ref{thm:sa}]
	Let $D\subset \cc M$ be the set of $\cc M$ functions which are finite linear combinations of products $u(x)v(\theta)$.
	Since $\ldue=L^2(\R\setminus\{0\},|x|^{-\alpha}dx)\otimes L^2(\Tu,d\theta)$, the set $D$ is dense in $\ldue$ and hence, by Proposition~\ref{prop:fourier} the operator $\Delta|_D$ is essentially self adjoint if and only if so are all $\widehat \Delta_k|_{D\cap H_k}$. 
	Since $n_\pm(\Delta|_D) = n_\pm(\delcomp)$, this is equivalent to $\delcomp$ being essentially self-adjoint.

	To conclude, recall that by Theorem~\ref{thm:zettl} the operator $\widehat \Delta_k$ is not essentially self-adjoint on $\lduef$ if and only if it is in the limit-circle case at at least one of the four endpoints $-\infty, \, 0^-,\,0^+ $ and $+\infty$. 
	Hence applying Lemma~\ref{lem:limitpoint} is enough to complete the proof.
\end{proof}

Now we proceed to study the self-adjoint extensions of the first Fourier component, proving Theorem~\ref{thm:sae} through Theorem~\ref{thm:zettl} and Remark~\ref{rmk:dmax}.

\begin{proof}[Proof of Theorem~\ref{thm:sae}]
	We start by proving the statement on $\dmin(\delfz)$.
	The operator $\delfz$ is transformed by the unitary map $U_0:\lduef\to L^2(\R\setminus\{0\})$, $U_0v(x)=|x|^{-\nicefrac{\alpha}{2}} v(x)$, in 
	\[
		\LBN_0 = \partial_x^2 - \frac \alpha 2 \left( \frac \alpha 2 +1\right)\frac 1 {x^2}.
	\]
	By \cite{bessel} and \cite[Lemma~13.3.1]{zettl}, it holds that $\dmin(\LBN_0)$ is the closure of $\cc {\R\setminus\{0\}}$ in the norm of $H^2({\R\setminus\{0\}},dx)$, i.e., 
	\[
		\|u\|_{H^2({\R\setminus\{0\}},dx)}=\|u\|_{L^2({\R\setminus\{0\}},dx)}+\|\partial_x u\|_{L^2({\R\setminus\{0\}},dx)}+\|\partial_x^2 u\|_{L^2({\R\setminus\{0\}},dx)}.
	\]
	From this follows that $\dmin(\delfz)=U_0^{-1}\dmin(\LBN_0)$ is given by the closure of $\cc {\R\setminus\{0\}}$ in $W=U_0^{-1}H^2({\R\setminus\{0\}},dx)$, w.r.t. the induced norm 
    \begin{equation}
      \label{eq:induced}
      \begin{split}
        \|v\|_{W} &= \| U_0 v \|_{H^2(\R\setminus\{0\},dx)} \\
        &= \|v\|_{\lduef}+\big\|
        |x|^{\nicefrac{\alpha}{2}}\partial_x(|x|^{-\nicefrac{\alpha}{2}}v) \big\|_{\lduef}+\||x|^{\nicefrac{\alpha}{2}}\partial_x^2(|x|^{-\nicefrac{\alpha}{2}}v)\|_{\lduef} \\
      \end{split}
 % \|v\|_{\lduef}+\big\| |x|^{\nicefrac{\alpha}{2}}\partial_x(|x|^{-\nicefrac{\alpha}{2}}v) \big\|_{\lduef}+\||x|^{\nicefrac{\alpha}{2}}\partial_x^2(|x|^{-\nicefrac{\alpha}{2}}v)\|_{\lduef}.
    \end{equation}

    To prove the statement, it suffices to show that on $C^\infty_c(\R\setminus\{0\})$ the induced norm \eqref{eq:induced} is equivalent to the norm of $\sobduef$, which is
    \begin{equation}
      \label{eq:normh2}
    \|v\|_{\sobduef} = \|v\|_{\lduef}+\big\| \partial_x \,v \big\|_{\lduef}+\|\delfz v\|_{\lduef}.
  \end{equation}
    
    To this aim, observe that 
	\begin{equation}
		\label{eq:deriv}
		\partial_x v(x)= |x|^{\nicefrac{\alpha}{2}}\partial_x(|x|^{-\nicefrac{\alpha}{2}}v) + \frac{\alpha} 2 \frac{v}{x},\qquad
		 \delfz v =|x|^{\nicefrac{\alpha}{2}}\partial^2_x\big(|x|^{-\nicefrac{\alpha}{2}} v \big) + \frac{\alpha}{2} \left( \frac{\alpha}{2} +1\right) \frac{v}{x^2}.
	\end{equation}
    Moreover, by a cutoff argument, it is clear that we can prove the bound separately for $v$ supported near the origin and away from it.

    Let $v\in\cc{\R\setminus\{0\}}$ be supported in $(-1,0)\cup(0,1)$.
    By \eqref{eq:deriv} and the fact that if $|x|\le 1$ then $|x|^{-1}, |x|^{-2}\ge 1$, it follows immediately that $\|v\|_{\sobduef}\le \|v\|_W$.
    In order to prove the opposite inequality, observe that $x^{-2}\ge x^{-1}$ and $v\in H^2_0((0,1),dx)\oplus H^2_0((-1,0),dx)$.
    Thus, by \cite[(3.5)]{bessel} we obtain
    \begin{multline}
    \left\|  v {x^{-1}}\right\|_{\lduef} + \left\| v {x^{-2}} \right\|_{\lduef} \le 2\|vx^{-2}\|_{\lduef} \\
= 2 \| vx^{-2-\nicefrac{\alpha}{2}}  \|_{L^2((0,1))} 
    \le 2C \|\partial_x (v x^{-\nicefrac{\alpha}{2}})\|_{H^2((0,1))} = 2C \|v\|_{W}.
  \end{multline}

  Finally, let $v\in \cc{\R\setminus\{0\}}$ be supported in $(1,+\infty)$ (the same argument will work also between $(-\infty,-1)$).
  In this case, $x^{-2}<x^{-1}<1$.
  Thus, by \eqref{eq:deriv}, \eqref{eq:induced}, \eqref{eq:normh2} and the triangular inequality, we get that for any $v\in C^\infty_c(\R\setminus\{0\})$ it holds
    \begin{multline*}
      \label{eq:boundrhs}
      \left|\|v\|_W - \|v\|_{\sobduef}\right|\\ \le C\bigg( \left\|  v {x^{-1}}\right\|_{\lduef} + \left\| v {x^{-2}} \right\|_{\lduef}  \bigg) \le 2C\|v \|_{\lduef},
    \end{multline*}
    for some constant $C>0$.
    Since $\|v\|_W$ and $\|v\|_{\sobduef}\ge \|v\|_{\lduef}$, this completes the proof of the first part of the theorem.

	We now proceed to the classification of the self-adjoint extensions of $\delfz$.
	For this purpose, recall the definition of $\phidpm$ and $\phinpm$ given in \eqref{def:phidn} and let
	\begin{equation*}
	    \phin(x)=\phinp(x)+\phinm(x),
	    \qquad
	    \phid(x)=\phidp(x)+\phidm(x).
	\end{equation*}
Observe that $\phid\in\lduef$ and that $\delfz \phid (x) = 0$ for any $x \notin (-2,-1)\cup(1,2)$. 
Since the function is smooth, this implies that $\phid\in\dmax(\delfz)$.
The same holds for $\phin$.
Moreover, a simple computation shows that $[\phidp,\phinp](0^+)=[\phidp,\phinp](0^-)=1$, and hence $\phin$ and $\phid$ satisfy the hypotheses of Theorem~\ref{thm:zettl}.
In particular, by Remark~\ref{rmk:dmax}, this implies that
\[
	\dmax(\delfz)=\dmin(\delfz)+\spn\{\phidp,\phinp,\phidm,\phinm\}.
\]

We claim that for any $u=u_0+u_D^+\phidp+u_N^+\phinp+u_D^-\phidm+u_N^-\phinm\in\dmax$ it holds
\begin{equation}\label{eq:phid}
	[u,\phin](0^+)=u_D^+,\qquad [u,\phid](0^+)=u_N^+,\qquad [u,\phin](0^-)=u_D^-,\qquad [u,\phin](0^-)=u_N^-.
\end{equation}
This, by Theorem~\ref{thm:zettl} will complete the classification of the self-adjoint extensions.
Observe that, \eqref{eq:patching} and the bilinearity of the Lagrange parentheses imply that $[u_0,\phin](0^\pm)=[u_0,\phid](0^\pm)=0$.
The claim then follows from the fact that
\begin{gather*}
	[\phidp,\phin](0^+) = [\phinp,\phid](0^+) =[\phidm,\phin](0^-) = [\phinm,\phid](0^-) = 1, \\
	[\phidm,\phin](0^+) = [\phinm,\phid](0^+) =[\phidp,\phin](0^-) = [\phinp,\phid](0^-) = 0.
\end{gather*}

To complete the proof, it remains only to identify the Friedrichs extension $\delfzf$.
Recall that such extension is always defined, and has domain 
\[
	\dom \delfzf = \{ u\in \sobfzero\mid \delfz u\in\lduef \}.
\]
Since if $\alpha\le -1$, $\phin\notin\sobf$, it is clear that the Friedrichs extension corresponds to the case where $u_N^+=u_N^-=0$, i.e., to $c_+=c_-=0$.
On the other hand, if $\alpha>-1$, since all the end-points are regular, by \cite[Corollary~10.20]{weyl} holds that the Friedrichs extension corresponds to the case where $u(0^\pm)=u_D^\pm=0$, i.e., to $c_+=c_-=+\infty$.
\end{proof}

\begin{rmk}\label{rmk:dirneu}
	If $u\in\dmax(\delfz)$, it holds
	\[
		u_D^+=[u,\phin](0^+) = \lim_{x\downarrow 0} \big( u(x) - x \,\partial_x u(x) \big) \qquad \text{and} \qquad  u_N^+=[u,\phid](0^+) = \lim_{x\downarrow 0} x^{-\alpha} \,\partial_x u(x). 
	\]
	This implies, in particular, that if $\alpha>-1$ then $u_D^+=u(0^+)$. 
	Indeed this holds if and only if the end-point $0^+$ is regular in the sense of Sturm-Liouville operators, see Definition~\ref{defn:regular}.
	Clearly the same computations hold at $0^-$.
\end{rmk}	

We conclude this section with a description of the maximal domain.

\begin{prop}\label{prop:dmax}
    For any $\alpha\in\R$, it holds that
    \[
    	\dmax(\delcomp)=
    	\begin{cases}
    		\sobdue = \sobduezero & \quad\text{if } \alpha\le -3 \text{ or } \alpha\ge 1,\\
    		\sobdue \oplus \spn\{ \phinp, \phinm \} & \quad\text{if } -3<\alpha\le-1,\\
    		\sobdue \supsetneqq \sobduezero & \quad\text{if } -1<\alpha <1.
    	\end{cases}
    \]
    Here we let, with abuse of notation, $\phinpm(x,y)=\phinpm(x)$.
\end{prop}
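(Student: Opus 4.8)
The plan is to reduce the multi-dimensional problem to the one-dimensional Fourier components $\widehat\Delta_k$, for which Theorem~\ref{thm:sa}, Lemma~\ref{lem:limitpoint} and Theorem~\ref{thm:sae} already give a complete picture, and then to re-assemble the description of $\dmax(\delcomp)$ from the pieces $\dmax(\widehat\Delta_k)$. Concretely, writing $\ldue=\bigoplus_{k\in\Z}H_k$ with $H_k\cong\lduef$, one has $u\in\dmax(\delcomp)$ if and only if each Fourier coefficient $u_k$ lies in $\dmax(\widehat\Delta_k)$ and, moreover, $\sum_k\|\widehat\Delta_k u_k\|^2_{\lduef}<\infty$. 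So the first step is to make this orthogonal-decomposition statement precise (it follows from the fact that $\Delta$ commutes with the $\theta$-rotations, hence with the projections onto the $H_k$, and from $\delcomp$ being densely defined with $\dmax=\dom{(\delcomp)^*}$).

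The second step is the case analysis. For $\alpha\le-3$ or $\alpha\ge1$, Theorem~\ref{thm:sa} says $\delcomp$ is essentially self-adjoint, so $\dmax(\delcomp)=\dmin(\delcomp)$, and one must identify this common domain with $\sobdue$ and with $\sobduezero$; the inclusions $\sobduezero\subseteq\dmax$ and $\dmax\subseteq\sobdue$ are the content of the Sobolev-space conventions, and essential self-adjointness forces all three to coincide — here one should invoke that $\sob=\sobzero$ in this range (the remark in the introduction flags this), so that $\sobdue=\sobduezero$ too. For $-3<\alpha\le-1$, Theorem~\ref{thm:sa} says only $\widehat\Delta_0$ fails to be essentially self-adjoint, and by Theorem~\ref{thm:sae} (applied with $k=0$) $\dmax(\widehat\Delta_0)=\dmin(\widehat\Delta_0)+\spn\{\phidp,\phinp,\phidm,\phinm\}$; but for $\alpha\le-1$ the functions $\phidpm$ already belong to $\sobf$ (indeed to the minimal domain modulo lower-order terms — this is exactly why $\delfzf$ imposes $u_N^\pm=0$ rather than $u_D^\pm=0$), so only the two extra directions $\phinp,\phinm$ survive, giving the $\sobdue\oplus\spn\{\phinp,\phinm\}$ summand, while all $k\ne0$ components contribute nothing beyond $\sobdue$. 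For $-1<\alpha<1$ every $\widehat\Delta_k$ is in the limit-circle case at $0^\pm$, but now the key point is that the "extra" solutions $\phidpm$ and $\phinpm$ are all already in $\sob$ near the origin (since $|x|^{-\alpha}$ is integrable and the relevant first derivatives are square-integrable against $|x|^{-\alpha}dx$ for $\alpha>-1$), so $\dmax(\delcomp)=\sobdue$; the strictness $\sobdue\supsetneqq\sobduezero$ then follows because, e.g., a cutoff of the constant function lies in $\sob$ but not in $\sobzero$ (its trace at $\sing$ does not vanish), equivalently $\delf\ne\delm$, which is asserted in Theorem~\ref{thm:markov}(iii).

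The step I expect to be the main obstacle is the verification, in the range $-1<\alpha<1$, that $\dmax(\delcomp)\subseteq\sob$ — i.e., that every distributional solution $u$ with $u,\Delta u\in\ldue$ automatically has $|\nabla u|\in\ldue$. Fibrewise this says $\dmax(\widehat\Delta_k)\subseteq H^1$ with the right weight, which is plausible from the explicit form of $\dmax(\widehat\Delta_k)$ (the singular solutions near $0$ are $1$, $x^{1+\alpha}$ and, for $k\ne0$, $\exp(\pm kx^{1+\alpha}/(1+\alpha))$, all of which have derivative controlled in $L^2(|x|^{-\alpha}dx)$ near $0$ precisely when $\alpha>-1$), but summing over $k$ requires a uniform-in-$k$ estimate: one needs $\sum_k\||\nabla u_k|\|^2<\infty$ given $\sum_k(\|u_k\|^2+\|\widehat\Delta_k u_k\|^2)<\infty$. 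The clean way is an integration-by-parts / energy identity: for $u\in\cc M$, $\|\nabla u\|^2_{\ldue}=-(\Delta u,u)_{\ldue}\le\tfrac12(\|\Delta u\|^2+\|u\|^2)$, and then one must argue this passes to all of $\dmax$ by the limit-circle structure — i.e., that the boundary terms $[u,v](0^\pm)$ that obstruct integration by parts do not spoil the $H^1$ bound because in this regime the obstructing functions are themselves in $\sob$. I would handle this by reducing to the fibres via the decomposition above, using Theorem~\ref{thm:sae} and Remark~\ref{rmk:dmax} to write each $u_k$ as (minimal part) $+$ (explicit singular terms), bounding the singular terms' gradients by hand with the weight $|x|^{-\alpha}$ near $0$, and bounding the minimal part by the energy identity; the $k\ne0$ terms need the $-|x|^{2\alpha}k^2$ potential term of $\widehat\Delta_k$, which is nonnegative after the sign flip and only helps.
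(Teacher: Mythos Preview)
Your overall strategy---Fourier-decompose and then run the case analysis via Theorems~\ref{thm:sa} and~\ref{thm:sae}---is exactly the paper's. The first two cases match essentially line for line: for $\alpha\le-3$ or $\alpha\ge1$ the paper uses $\dmax=\dom\delf=\sobduezero$ together with the standing inclusion $\sobdue\subset\dmax$; for $-3<\alpha\le-1$ it lifts the one-dimensional decomposition to $\dmax(\delcomp)=\sobdue+\spn\{\phidpm,\phinpm\}$ and checks $\phidpm\in\sobdue$ (since $\phidpm\in\sobf$ and $\delfz\phidpm=0$) while $\phinpm\notin\sob$.

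The one substantive difference is in the range $-1<\alpha<1$. For the fibrewise inclusion $\dmax(\widehat\Delta_k)\subset\sobduef$ the paper does not decompose into minimal-plus-singular parts as you propose; instead it observes directly that $\lim_{x\to0^\pm}|x|^{-\alpha}\partial_x u=[u,\phid](0^\pm)$ is finite for any $u\in\dmax(\widehat\Delta_k)$, so $|x|^{-\alpha}\partial_x u$ is bounded near $0$, and integrability of $|\partial_x u|^2|x|^{-\alpha}$ near the origin then follows simply from $\alpha>-1$ (with $\pm\infty$ handled by the limit-point condition). This Lagrange-bracket argument is shorter than your route. Conversely, you are right to flag the uniform-in-$k$ summability question: the paper reduces ``$\dmax(\delcomp)\subset\sobdue$'' to ``$\dmax(\widehat\Delta_k)\subset\sobduef$ for every $k$'' via Parseval, but that reduction does not on its face control $\sum_k\|\partial_x u_k\|^2+\sum_k k^2\||x|^{\alpha}u_k\|^2$; your energy-identity approach, once the boundary contributions $u_D^\pm u_N^\pm$ are handled using the explicit singular directions, is what would actually close this gap. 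For the strict inclusion $\sobdue\supsetneqq\sobduezero$ the paper invokes the existence of self-adjoint extensions other than $\delf$, which is equivalent to your $\delf\neq\delm$ argument.
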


\begin{proof}
	Recall that, by definition, $\sobdue\subset \dmax(\delcomp)$.
	Moreover, if $\alpha\le-3$ or if $\alpha\ge1$, by Theorem~\ref{thm:sa} it holds $\dmax(\delcomp)=\dom {\deld}=\sobduezero\subset \sobdue$. This proves the first statement.

    On the other hand, by Remark~\ref{rmk:dmax}, if $\alpha\in(-3,-1]$, since $\delfk$ is essentially self-adjoint for any $k\neq0$ we can decompose the maximal domain as
    \begin{equation*}
    	\begin{split}
	    	\dmax(\delcomp) &= \dmax(\delfz) \oplus \left(\bigoplus_{k\in\Z\setminus\{0\}} \dom{\delfk} \right) 
	    \end{split}
    \end{equation*}
    Moreover, letting $\pi_0$ be the projection on the $k=0$ Fourier component and defining $(\pi_0^{-1} u_0)(x,\theta)=u_0(x)$ for any $u_0\in\lduef$, the previous decomposition and the fact that $\dmin(\delcomp)\subset \sobdue\subset \dmax(\delcomp)$ implies that
    \[
    	\begin{split}
			\dmax(\delcomp) &= \big\{ u = u_0+\pi_0^{-1}\tilde u \mid u_0\in\dmin(\delcomp), \, \tilde u\in \spn\{\phidp,\phinp,\phidm,\phinm\}  \big\}\\
    		&= \sobdue+\spn\{\phidp,\phinp,\phidm,\phinm\}.
    	\end{split}
    \]
    Here, in the last equality, we let $\phid(x,y)=\phid(x)$ and $\phin(x,y)=\phin(x)$.
    A simple computation shows that $\phid\in\sobf$ and $\phin\notin\sobf$.
   	Since $\delfz\phid=0$, it follows that $\phid\in \sobdue$, while $\phin\notin\sobdue$. 
   	This implies the statement.
    
    To complete the proof it suffices to prove that if $\alpha\in(-1,1)$ it holds $\dmax(\delcomp)\subset\sobdue$.
    In fact, the inequality $\sobdue\neq\sobduezero$ will then follow from the fact that $\delf$ is not the only self-adjoint extension of $\delcomp$.
    By Parseval identity, $\phi,\Delta\phi \in\ldue$ if and only $\phi_k,\delfk\phi_k\in\lduef$ for any $k\in\Z$ and thus the statement is equivalent to $\dmax(\delfk)\subset\sobduef$ for any $k\in\Z$.
    Let $u\in\dmax(\delfk)$.
    Since $\lim_{x\rightarrow 0^\pm} x^{-\alpha}\partial_x u(x)=[u,\phid](0^\pm)$, this limit exists and is finite.
    Moreover, since $\pm\infty$ are limit-point, it holds $\lim_{x\rightarrow \pm\infty} x^{-\alpha}\partial_x u(x)=[u,\phid](\pm\infty)=0$.
    Hence, $x^{-\alpha}\partial_x u$ is square integrable near $0$ and at infinity, and from the characterization \eqref{eq:dmaxsturm} follows that $\delfk u\in\lduef$. 
    This proves that $u\in\sobduef$ and thus the proposition.
\end{proof}

 \section{Bilinear forms}
 \label{sec:bilinear}

 \subsection{Preliminaries}
 This introductory section is based on \cite{fukushima}.
 Let $\hilb$ be an Hilbert space with scalar product $(\cdot,\cdot)_\hilb$.
 A non-negative symmetric bilinear form densely defined on $\hilb$, henceforth called only a \em symmetric form on \em $\hilb$, is a map $\dex:\dom \dex\times\dom \dex \to \R$ such that $\dom \dex$ is dense in $\hilb$ and $\dex$ is bilinear, symmetric, and non-negative (i.e., $\dex(u,u)\ge 0$ for any $u\in \dom\dex$).
 A symmetric form is \em closed \em if $\dom \dex$ is a complete Hilbert space with respect to the scalar product
 \begin{equation}
 	\label{eq:scalar}
 	(u,v)_\dex = (u,v)_{\hilb} + \dex(u,v), \qquad u,v\in\dom \dex.
 \end{equation}
 To any densely defined non-positive definite self-adjoint operator $A$ it is possible to associate a symmetric form $\dex_A$ such that
 \begin{gather*}
 	\dex_A(u,v)=(-Au,v)\\
 	\dom A=\{ u \in \dom {\dex_A}\colon\: \exists v\in \hilb\text{ s.t. } \dex(u,\phi)=(v,\phi) \text{ for all }\phi\in \dom {\dex_A} \}.
 \end{gather*}
 Indeed, we have the following.

 \begin{thm}[\cite{kato, fukushima}]
 	\label{thm:kato}
 	Let $\hilb$ be an Hilbert space, then the map $A\mapsto \dex_A$ induces a one to one correspondence 
		\[
			A \text{ non-positive definite self-adjoint operator } \Longleftrightarrow \dex_A \text{ closed symmetric form}.
		\]
	In particular, the inverse correspondence can be characterized by $\dom A\subset \dom {\dex_A}$ and $\dex_A(u,v)=(-Au,v)$  for all  $u\in \dom A,\, v\in \dom {\dex_A}$.
 \end{thm}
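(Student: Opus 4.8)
The plan is to establish this classical \emph{first representation theorem} for closed symmetric forms (due to Kato) by constructing the two maps explicitly and checking that they invert one another: from an operator one passes to a form via the square root in the functional calculus, and from a form one passes to an operator via the Riesz representation theorem applied inside the form Hilbert space $\big(\dom\dex,(\cdot,\cdot)_\dex\big)$. \emph{From $A$ to $\dex_A$:} given $A$ non-positive definite and self-adjoint, the spectral theorem realises $-A$ as a non-negative self-adjoint operator, so its square root $(-A)^{1/2}$ is a well-defined self-adjoint operator. One sets $\dom{\dex_A}:=\dom{(-A)^{1/2}}$ and $\dex_A(u,v):=\big((-A)^{1/2}u,(-A)^{1/2}v\big)_\hilb$. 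This form is symmetric, non-negative, and densely defined, since $\dom A\subset\dom{(-A)^{1/2}}$ is dense; it is closed because the scalar product $(\cdot,\cdot)_{\dex_A}$ of \eqref{eq:scalar} is exactly the graph scalar product of the closed operator $(-A)^{1/2}$, so $\big(\dom{\dex_A},(\cdot,\cdot)_{\dex_A}\big)$ is complete. Finally, using that the spectral projections $E_{[0,n]}$ of $-A$ map $\hilb$ into $\dom A$, one sees that $\dom A$ is a core for $(-A)^{1/2}$, hence for $\dex_A$, and on $\dom A$ the self-adjointness of $(-A)^{1/2}$ gives $\dex_A(u,v)=(-Au,v)_\hilb$, the compatibility relation in the statement.

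\emph{From $\dex$ to $A$:} let $\dex$ be a closed symmetric form. For each $v\in\hilb$ the map $u\mapsto(u,v)_\hilb$ is a bounded functional on $\big(\dom\dex,(\cdot,\cdot)_\dex\big)$, because $|(u,v)_\hilb|\le\|u\|_\hilb\|v\|_\hilb\le\|u\|_\dex\|v\|_\hilb$; the Riesz theorem provides a unique $Gv\in\dom\dex$ with $(u,v)_\hilb=(u,Gv)_\dex$ for all $u\in\dom\dex$. One checks that $G\colon\hilb\to\hilb$ is bounded with $\|G\|\le1$, symmetric, non-negative since $(Gv,v)_\hilb=(Gv,Gv)_\dex\ge0$, and injective; hence $\mathrm{ran}\,G$ is dense, $G^{-1}$ is self-adjoint on $\mathrm{ran}\,G$ with $G^{-1}\ge I$, and $A:=I-G^{-1}$ with $\dom A:=\mathrm{ran}\,G$ is a non-positive self-adjoint operator. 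For $w=Gv\in\dom A$ and $u\in\dom\dex$ one computes $\dex(w,u)=(w,u)_\dex-(w,u)_\hilb=(u,Gv)_\dex-(w,u)_\hilb=(u,v)_\hilb-(w,u)_\hilb=\big(u,(G^{-1}-I)w\big)_\hilb=(-Aw,u)_\hilb$, which is the desired identity, and $\dom A\subset\dom\dex$ by construction.

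\emph{Mutual invertibility:} starting from $\dex$, forming $A$ and then the form attached to $A$, one recovers $\dex$, since $\dom A=\mathrm{ran}\,G$ is dense in $\big(\dom\dex,(\cdot,\cdot)_\dex\big)$ — if $u$ were $(\cdot,\cdot)_\dex$-orthogonal to every $Gv$ then $(u,v)_\hilb=0$ for all $v$, so $u=0$ — whence by closedness $\dex$ is the closure of $(u,v)\mapsto(-Au,v)_\hilb$ on $\dom A$, which is precisely $\dex_A$. In the reverse direction, if $A_1$ and $A_2$ are non-positive self-adjoint operators both representing the same closed form in the sense of the characterization, then for $u\in\dom A_1$, $v\in\dom A_2$ one has $(-A_1u,v)_\hilb=\dex(u,v)=\dex(v,u)=(-A_2v,u)_\hilb=(u,-A_2v)_\hilb$, so $-A_2\subset(-A_1)^\ast=-A_1$; a symmetric extension of a self-adjoint operator coincides with it, hence $A_1=A_2$ and the operator is uniquely determined by its form.

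The step I expect to be delicate is the matching of domains: the identity $\dex_A(u,v)=(-Au,v)_\hilb$ is transparent only on the small domain $\dom A$, whereas the genuine form domain is $\dom{(-A)^{1/2}}$, so one must carefully verify that $\dom A$ is a core for both $\dex_A$ and $\dex$, ensuring that the two closed forms, which agree on $\dom A$ together with their scalar products, have the same completion and hence the same domain. The remaining steps are routine manipulations with the spectral calculus and the Riesz representation theorem; complete proofs are in \cite{kato,fukushima}.
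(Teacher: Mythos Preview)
Your argument is correct and follows the classical route found in the very references \cite{kato,fukushima} that the theorem is attributed to: the square-root construction via the spectral theorem in one direction, and the Riesz representation in the form Hilbert space (yielding the resolvent $G=(I-A)^{-1}$) in the other. The paper itself does not prove this statement; it is quoted as a standard result from those sources, so there is no ``paper's own proof'' to compare against. Your sketch is essentially the proof one finds in Kato's book, and the point you flag as delicate --- that $\dom A$ is a form core --- is exactly the right place to be careful, and your justification via spectral truncations is the standard one.
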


 Consider now a second countable locally compact Hausdorff space $X$ with its Borel sigma algebra $\mathcal F$, and $m$ a Radon measure on $\mathcal F$ with full support.
 \begin{defn}
 	\label{def:markov}
 	A symmetric form $\dex$ on $L^2(X,m)$ is \em Markovian \em if for any $\eps>0$ there exists $\psi_\eps:\R\to\R$ such that $-\eps\le\psi_\eps\le 1+\eps$, $\psi_\eps(t)=t$ if $t\in [0,1]$,  $0\le\psi_\eps(t)-\psi_\eps(s)\le t-s$ whenever $s<t$ and
 	\[
 		u\in \dom{\dex} \Longrightarrow \psi_\eps(u)\in\dom{\dex} \quad\text{and}\quad \dex(\psi_\eps(u),\psi_\eps(u)) \le \dex(u,u).
 	\]
 	A closed Markovian symmetric form is a \em Dirichlet form\em.
 	
 	A semigroup $\{T_t\}_{t\ge 0}$ on $L^2(X,m)$ is \emph{Markovian} if 
 	\[
 		u\in L^2(X,m) \text{ s.t. } 0\le u \le 1 \quad m-\text{a.e.} \Longrightarrow 0\le T_t u \le 1 \quad m-\text{a.e. for any } t>0.
 	\]
 	
 	A non-positive self-adjoint operator is \emph{Markovian} if it generates a Markovian semigroup.
 \end{defn}
 
 When the form is closed, the Markov property can be simplified, as per the following Theorem. 
 For any $u:X\to \R$ let $u_\sharp=\min\{1,\max\{u,0\}\}$.

 \begin{thm}[Theorem 1.4.1 of \cite{fukushima}]
     \label{thm:normcontr}
    The closed symmetric form $\dex$ is Markovian if and only if
    \[
 		u\in \dom{\dex} \Longrightarrow u_\sharp\in\dom{\dex} \text{ and } \dex(u_\sharp,u_\sharp) \le \dex(u,u).
    \]
 \end{thm}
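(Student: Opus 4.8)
The plan is to prove the two implications separately, using throughout that closedness of $\dex$ means exactly that $(\dom\dex,(\cdot,\cdot)_\dex)$ is a Hilbert space comparable with $L^2(X,m)$: bounded sequences in $\dom\dex$ admit weakly convergent subsequences, the inclusion $\dom\dex\hookrightarrow L^2(X,m)$ is continuous, and $v\mapsto\dex(v,v)$ is weakly lower semicontinuous.

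For the direction ``Markovian $\Rightarrow$ the unit contraction operates'', I would fix $u\in\dom\dex$ and, for $\eps>0$, an admissible $\psi_\eps$ as in Definition~\ref{def:markov}. A short check shows the constraints force $|\psi_\eps(t)-t_\sharp|\le\min\{\eps,|t-t_\sharp|\}$ for every $t\in\R$, so, since $|u-u_\sharp|\le|u|\in L^2(X,m)$, dominated convergence gives $\psi_\eps(u)\to u_\sharp$ in $L^2(X,m)$ as $\eps\to0$. Because $\dex(\psi_\eps(u),\psi_\eps(u))\le\dex(u,u)$, the family $\{\psi_\eps(u)\}_{\eps>0}$ is bounded in $(\dom\dex,(\cdot,\cdot)_\dex)$; a weakly convergent subsequence then has weak limit equal to the $L^2$-limit $u_\sharp$, whence $u_\sharp\in\dom\dex$, and weak lower semicontinuity yields $\dex(u_\sharp,u_\sharp)\le\liminf\dex(\psi_\eps(u),\psi_\eps(u))\le\dex(u,u)$.

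For the converse, ``the unit contraction operates $\Rightarrow$ Markovian'', it suffices to produce for each $\eps>0$ one admissible $\psi_\eps$ witnessing Definition~\ref{def:markov}, and I would take $\psi_\eps(t)=(t\wedge(1+\eps))\vee(-\eps)$. The conclusion is built in steps. (1)~Dilations $v\mapsto\lambda v$, $\lambda>0$, trivially operate. (2)~From the pointwise identity $R\,(v/R)_\sharp=v^+\wedge R$ and the monotone $L^2$-convergence $v^+\wedge R\to v^+$ (dominated by $|v|$), the weak-compactness argument above shows $v\mapsto v^+$ (hence $v\mapsto v^-$) operates, and then so does $v\mapsto v^+\wedge c$ for every $c>0$, by applying the unit contraction to $v^+/c$. (3)~For $f,g\in\dom\dex$ with $f,g\ge0$ and $fg=0$ $m$-a.e.\ one has $(f-\lambda g)^+=f$, so $\dex(f,f)\le\dex(f-\lambda g,f-\lambda g)$ gives $2\dex(f,g)\le\lambda\dex(g,g)$ and, letting $\lambda\downarrow0$, the orthogonality inequality $\dex(f,g)\le0$. (4)~Writing $v\wedge c=(v^+\wedge c)-v^-\in\dom\dex$ for $c\ge0$, the expansion
\[
	\dex(v\wedge c,v\wedge c)-\dex(v,v)=\big[\dex(v^+\wedge c,v^+\wedge c)-\dex(v^+,v^+)\big]+2\,\dex\big((v^+-c)^+,v^-\big)
\]
is $\le0$: the bracket by (2), since $v^+\wedge c=c\,(v^+/c)_\sharp$; the cross term by (3), since $(v^+-c)^+=v^+-v^+\wedge c\ge0$ has support disjoint from $v^-\ge0$. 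Symmetrically $v\mapsto v\vee(-c)$ operates. (5)~Applying (4) twice shows $\psi_\eps(v)=(v\wedge(1+\eps))\vee(-\eps)\in\dom\dex$ with $\dex(\psi_\eps(v),\psi_\eps(v))\le\dex(v,v)$, which is exactly the Markov property of $\dex$.

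The main obstacle is the converse direction, and concretely the additive constants hidden in the truncations $v\mapsto v\wedge c$: when $m(X)=\infty$ these constants need not lie in $L^2(X,m)$, so one cannot simply translate and invoke the unit contraction directly. The decomposition $v\wedge c=(v^+\wedge c)-v^-$ together with the monotone limits of step (2) keeps every intermediate object inside $\dom\dex$, and the sign of the residual cross term is supplied by the orthogonality inequality of step (3) — which must itself be squeezed out of the hypothesis rather than assumed. Getting this bookkeeping exactly right (in particular the two weak-limit passages and the expansion in step (4)) is where the real work lies; the rest is routine.
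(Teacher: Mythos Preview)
The paper does not give its own proof of this statement: it is quoted verbatim as Theorem~1.4.1 of \cite{fukushima} and used as a black box. So there is nothing in the paper to compare against.

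That said, your argument is correct and is essentially the standard proof one finds in Fukushima's book. The forward implication is exactly the weak-compactness/lower-semicontinuity argument you give (the pointwise bound $|\psi_\eps(t)-t_\sharp|\le\min\{\eps,|t-t_\sharp|\}$ is the right way to get $L^2$-convergence when $m(X)=\infty$). For the converse, your chain (dilations $\Rightarrow$ $v\mapsto v^+\wedge R$ via $R\,(v/R)_\sharp$ $\Rightarrow$ $v\mapsto v^+$ by a second weak limit $\Rightarrow$ the orthogonality inequality $\dex(f,g)\le0$ for disjointly supported nonnegative $f,g$ $\Rightarrow$ $v\mapsto v\wedge c$ via the decomposition $v\wedge c=(v^+\wedge c)-v^-$) is the standard route, and the algebra in step~(4) checks out: indeed $v^+-v^+\wedge c=(v^+-c)^+\in\dom\dex$ and has disjoint support from $v^-$, so the cross term is $\le0$. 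One tiny remark: in step~(5) you should note explicitly that $v\vee(-\eps)=-\big((-v)\wedge\eps\big)$, so step~(4) applies to $-v$; you say ``symmetrically'' which is fine but this is the symmetry being invoked.
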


 Since any function of $L^\infty(X,m)$ is approximable by functions in $L^2(X,m)$, the Markov property allows to extend the definition of $\{T_t\}_{t\ge 0}$ to $L^\infty(X,m)$, and moreover implies that it is a contraction semigroup on this space.
 When $\{T_t\}_{t\ge 0}$ is the evolution semigroup of the heat equation, the Markov property can be seen as a physical admissibility condition.
 Namely, it assures that when starting from an initial datum $u$ representing a temperature distribution (i.e., a positive and bounded function) the solution $T_t u$ remains a temperature distribution at each time, and, moreover, that the heat does not concentrate.

 The following theorem extends the one-to-one correspondence given in Theorems~\ref{thm:stone} and \ref{thm:kato} to the Markovian setting.
 \begin{thm}[\cite{fukushima}]
 	Let $A$ be a non-positive self-adjoint operator on $L^2(X,m)$. The following are equivalents
 	\begin{enumerate}
 		\item $A$ is a Markovian operator;
 		\item $\dex_A$ is a Dirichlet form;
 		\item $\{e^{tA}\}_{t\ge 0}$ is a Markovian semigroup.
 	\end{enumerate}
 \end{thm}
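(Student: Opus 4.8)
The plan is to separate the triple equivalence into its trivial and its substantive part. The equivalence (1)$\Leftrightarrow$(3) is nothing but Definition~\ref{def:markov}: a non-positive self-adjoint operator is, \emph{by definition}, Markovian exactly when the strongly continuous semigroup $\{e^{tA}\}_{t\ge 0}$ it generates is Markovian. Next, by Theorem~\ref{thm:kato} the form $\dex_A$ is automatically closed, so ``(2): $\dex_A$ is a Dirichlet form'' is the same as ``$\dex_A$ is Markovian'', which by Theorem~\ref{thm:normcontr} is in turn equivalent to the single unit-contraction property $u\in\dom{\dex_A}\Rightarrow u_\sharp\in\dom{\dex_A}$ and $\dex_A(u_\sharp,u_\sharp)\le\dex_A(u,u)$. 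Hence everything reduces to showing that this property of $\dex_A$ is equivalent to the Markovianity of $\{e^{tA}\}_{t\ge 0}$.

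The first step would be to pass from the semigroup to the resolvent $G_\beta=(\beta I-A)^{-1}$, $\beta>0$. Using the Laplace representation $G_\beta u=\int_0^\infty e^{-\beta t}e^{tA}u\,dt$ in one direction and the exponential formula $e^{tA}u=\lim_{n\to\infty}(\tfrac n t G_{n/t})^{n}u$ (strong limit in $\lduex$) in the other, and observing that the class of operators $T$ on $\lduex$ with the property ``$0\le u\le 1\Rightarrow 0\le Tu\le 1$'' is stable under composition, under the averaging implicit in these formulas (since $\int_0^\infty\beta e^{-\beta t}\,dt=1$), and under strong limits, one obtains that $\{e^{tA}\}_{t\ge 0}$ is Markovian if and only if $\beta G_\beta$ is Markovian for every $\beta>0$.

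The second step connects the resolvent to the form through the variational characterisation following Theorem~\ref{thm:kato}: for $u\in\lduex$ the element $v:=\beta G_\beta u$ is the unique member of $\dom{\dex_A}$ solving $\dex_A(v,w)+\beta(v,w)=\beta(u,w)$ for all $w\in\dom{\dex_A}$. Assuming $\dex_A$ Markovian and $0\le u\le 1$, I would test this identity with $w=v-v_\sharp$. Since $v_\sharp\in\dom{\dex_A}$ and $\dex_A(v_\sharp,v_\sharp)\le\dex_A(v,v)$, expanding $\dex_A(v,v)=\dex_A(v_\sharp,v_\sharp)+2\dex_A(v_\sharp,v-v_\sharp)+\dex_A(v-v_\sharp,v-v_\sharp)$ yields $\dex_A(v,v-v_\sharp)\ge\tfrac12\dex_A(v-v_\sharp,v-v_\sharp)\ge 0$; on the other hand the identity rewrites the same quantity as $\beta(u-v,\,v-v_\sharp)$, which is $\le 0$ because $v-v_\sharp=(v-1)^+-v^-$ and the pointwise products $(u-v)(v-1)^+$ and $-(u-v)v^-$ are both $\le 0$. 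Hence $\beta(u-v,v-v_\sharp)=0$ with a pointwise non-positive integrand, which forces $(v-1)^+=v^-=0$, i.e.\ $0\le v\le 1$; thus (2)$\Rightarrow$(3). For the converse I would bring in the Yosida-type approximating forms $\dex_A^{(\beta)}(u,u)=\beta(u-\beta G_\beta u,\,u)$, which increase to $\dex_A(u,u)$ as $\beta\to\infty$ on $\dom{\dex_A}$ and to $+\infty$ off it; granting the per-$\beta$ inequality $\dex_A^{(\beta)}(u_\sharp,u_\sharp)\le\dex_A^{(\beta)}(u,u)$, one passes to the limit and concludes via Theorem~\ref{thm:normcontr}.

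The main obstacle is precisely this last per-$\beta$ inequality: spelled out, it is the purely operator-theoretic assertion that if $S=\beta G_\beta$ is symmetric, positivity-preserving and an $L^\infty$-contraction (hence sub-Markovian), then $(u-Su,u)\ge(u_\sharp-Su_\sharp,u_\sharp)$. When $S$ admits a non-negative symmetric kernel $s(x,y)$ this is transparent from the Beurling--Deny-type decomposition of $(u-Su,u)$ into a non-negative ``killing'' part $\int u(x)^2\big(1-\int s(x,y)\,dm(y)\big)\,dm(x)$ and a non-negative ``jump'' part $\tfrac12\iint s(x,y)\big(u(x)-u(y)\big)^2\,dm(x)\,dm(y)$, each of which can only decrease when $u$ is replaced by the normal contraction $u_\sharp$; the real work lies in justifying this for a general symmetric Markovian operator, which is exactly the point at which one must invoke the systematic development of \cite{fukushima}.
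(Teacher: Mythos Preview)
The paper does not supply its own proof of this theorem: it is stated with a citation to \cite{fukushima} and used as a black box, so there is nothing in the paper to compare your argument against.

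That said, your sketch is correct and is essentially the standard proof from \cite{fukushima}. The reduction (1)$\Leftrightarrow$(3) is indeed definitional here, and your passage semigroup $\leftrightarrow$ resolvent via the Laplace/exponential formulas is the usual one. Your argument for (2)$\Rightarrow$(3), testing the resolvent identity against $w=v-v_\sharp$ and using $\dex_A(v_\sharp,v_\sharp)\le\dex_A(v,v)$ together with the pointwise sign of $(u-v)(v-v_\sharp)$, is clean and complete (note that the conclusion $v=v_\sharp$ really comes from the vanishing of the nonpositive integrand $(u-v)(v-v_\sharp)$ combined with $0\le u\le 1$, exactly as you indicate). For (3)$\Rightarrow$(2) you correctly single out the only nontrivial step, the per-$\beta$ inequality $\dex_A^{(\beta)}(u_\sharp,u_\sharp)\le\dex_A^{(\beta)}(u,u)$ for a symmetric Markovian $S=\beta G_\beta$, and your Beurling--Deny-style heuristic with a kernel is the right picture; in \cite{fukushima} this is handled by an elementary but slightly delicate argument that does not require an integral kernel, and you are right to flag it as the point where one must invoke that reference.
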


 Given a non-positive symmetric operator $A$ we can always define the closable symmetric form
 \begin{equation*}
    \dex(u,v)= (-Au,v), \quad
    \dom \dex=\dom A.
 \end{equation*}
 The Friedrichs extension $A_F$ of $A$ is then defined as the self-adjoint operator associated via Theorem~\ref{thm:kato} to the closure $\dex_0$ of this form.
 Namely, $\dom {\dex_0}$ is the closure of $\dom A$ with respect to the scalar product \eqref{eq:scalar}, and $\dex_0(u,v)=\lim_{n\rightarrow+\infty} \dex(u_n,v_n)$ for $u_n\rightarrow u$ and $v_n\rightarrow v$ w.r.t. $(\cdot,\cdot)_\dex$.
 It is a well-known fact that the Friedrichs extension of a Markovian operator is always a Dirichlet form (see, e.g., \cite[Theorem 3.1.1]{fukushima}).

A Dirichlet form $\dex$ is said to be \emph{regular} on $X$ if $\dom \dex\cap C_c(X)$ is both dense in $\dom{\dex}$ w.r.t. the scalar product \eqref{eq:scalar} and dense in $C_c(X)$ w.r.t. the $L^\infty(X)$ norm.
To any regular Dirichlet form $\dex_A$ it is possible to associate a Markov process $\{X_t\}_{t\ge 0}$ which is generated by $A$ (indeed they are in one-to-one correspondence to a particular class of Markov processes, the so-called Hunt processes, see \cite{fukushima} for the details).

If its associated Dirichlet form is regular, by Definitions~\ref{def:stoc} and \ref{def:recurr}, a Markovian operator is said \emph{stochastically complete} if its associated Markov process has almost surely infinite lifespan, and \emph{recurrent} if it intersects any subset of $X$ with positive measure an infinite number of times.
If it is not stochastically complete, an operator is called \emph{explosive}.
Observe that recurrence is a stronger property than stochastic completeness,  but the two notions coincide when $m(X)<+\infty$, \cite[Section~2.11]{posilicano}.
% Since we will only consider regular Dirichlet forms, we refer to \cite{fukushima} for a definition of recurrence valid for general Dirichlet forms.

We will need the following characterizations.
 \begin{thm}[Theorem 1.6.6 in \cite{fukushima}]
 	\label{thm:fukushima}
 	A Dirichlet form $\dex$ is stochastically complete if and only if there exists a sequence $\{u_n\}\subset \dom{\dex}$ satisfying
 	\[
 		0\le u_n\le 1,\quad \lim_{n\rightarrow+\infty} u_n =1 \quad m-\text{a.e.},
 	\]
 	such that
 	\[
 		\dex(u_n,v)\rightarrow 0 \quad \text{for any }v\in \dom\dex\cap L^1(X,m).
 	\]
\end{thm}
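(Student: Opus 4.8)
The plan is to reduce stochastic completeness (Definition~\ref{def:stoc}), which is phrased through the semigroup $\{e^{tA}\}_{t\ge0}$ attached to $\dex$ via Theorems~\ref{thm:stone} and~\ref{thm:kato}, to the resolvent statement $\beta G_\beta 1=1$ $m$-a.e., where $G_\beta=(\beta-A)^{-1}=\int_0^\infty e^{-\beta t}e^{tA}\,dt$ for $\beta>0$, and then to run everything through the single identity
\begin{equation}\label{eq:star}
	\dex(G_\beta f,\psi)+\beta\,(G_\beta f,\psi)=(f,\psi),\qquad f\in L^2(X,m),\ \psi\in\dom\dex,
\end{equation}
which is immediate from $-AG_\beta f=f-\beta G_\beta f$ and the characterization of $\dex=\dex_A$ in Theorem~\ref{thm:kato}. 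The preliminary reductions I would record first: since $A$ is Markovian, $\beta G_\beta$ is positivity-preserving and sub-Markovian, it extends to a contraction of $L^\infty(X,m)$ and of $L^1(X,m)$, and $t\mapsto e^{tA}1$ is non-increasing with $0\le e^{tA}1\le1$; hence, by uniqueness of Laplace transforms, $\dex$ is stochastically complete if and only if $\beta G_\beta1=1$ $m$-a.e.\ for one (equivalently every) $\beta>0$. I would also fix, using $\sigma$-finiteness of $m$, a sequence $\phi_k\in L^2(X,m)\cap L^\infty(X,m)$ with $0\le\phi_k\uparrow1$ $m$-a.e., noting that $\beta G_\beta\phi_k\uparrow\beta G_\beta1$ $m$-a.e.\ by monotone convergence.

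For the necessity half (``stochastically complete $\Rightarrow$ the sequence exists''), assuming $\beta G_\beta1=1$ I would simply take $u_n:=\beta G_\beta\phi_n$. Then $u_n\in\dom A\subset\dom\dex$, one has $0\le u_n\le1$ by Markovianity, and $u_n\uparrow\beta G_\beta1=1$ $m$-a.e. For $v\in\dom\dex\cap L^1(X,m)$, identity~\eqref{eq:star} with $f=\phi_n$, $\psi=v$ rewrites as $\dex(u_n,v)=\beta(\phi_n,v)-\beta^2(G_\beta\phi_n,v)=\beta\int_X(\phi_n-u_n)\,v\,dm$, and since $0\le\phi_n,u_n\le1$ while $\phi_n-u_n\to0$ $m$-a.e., dominated convergence with majorant $|v|\in L^1(X,m)$ gives $\dex(u_n,v)\to0$.

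For the sufficiency half, given a sequence $\{u_n\}$ as in the statement I would fix $v\in L^1(X,m)\cap L^2(X,m)$ with $v\ge0$; then $G_\beta v\in\dom A\cap L^1(X,m)\subset\dom\dex\cap L^1(X,m)$ using the $L^1$-contractivity of $e^{tA}$, so~\eqref{eq:star} with $f=v$, $\psi=u_n$ and symmetry of $\dex$ yield $\dex(u_n,G_\beta v)+\beta(u_n,G_\beta v)=(v,u_n)$. Letting $n\to\infty$ kills the first term by the hypothesis applied to $G_\beta v$, while $(v,u_n)\to\int_X v\,dm$ and $(u_n,G_\beta v)\to\int_X G_\beta v\,dm$ by dominated convergence; hence $\int_X\beta G_\beta v\,dm=\int_X v\,dm$. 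Testing the symmetry relation $(\beta G_\beta v,\phi_k)=(v,\beta G_\beta\phi_k)$ and passing to the limit in $k$ by monotone convergence turns this into $\int_X v\,(\beta G_\beta1)\,dm=\int_X v\,dm$; taking $v=\phi_k$ and using $0\le\beta G_\beta1\le1$ forces $\beta G_\beta1=1$ $m$-a.e., which is stochastic completeness.

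The conceptual content is light once~\eqref{eq:star} is available; the real work is bookkeeping. I expect the main obstacle to be the foundational reductions done ``first'': translating Definition~\ref{def:stoc} into the resolvent equality $\beta G_\beta1=1$ (which needs the monotonicity of $t\mapsto e^{tA}1$ and uniqueness of Laplace transforms) and justifying the extension of the symmetric operators $e^{tA}$ and $G_\beta$ to $L^\infty(X,m)$ together with the $L^1$--$L^\infty$ duality $\int_X(\beta G_\beta v)\,dm=\int_X v\,(\beta G_\beta1)\,dm$ --- all of which rest on the Markov property of $\dex$ and the $\sigma$-finiteness of $m$. After that, every limit is a routine application of dominated or monotone convergence, with the integrable majorant supplied by the test function.
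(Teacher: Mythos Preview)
The paper does not prove this statement: Theorem~\ref{thm:fukushima} is quoted verbatim as ``Theorem~1.6.6 in \cite{fukushima}'' and used as a black box, so there is no proof in the paper to compare against. Your argument is a correct reconstruction of the standard resolvent proof (essentially the one in Fukushima--Oshima--Takeda): reduce $e^{tA}1=1$ to $\beta G_\beta 1=1$, produce the approximating sequence as $u_n=\beta G_\beta\phi_n$ for the forward direction, and for the converse test the resolvent identity against $G_\beta v$ with $v\in L^1\cap L^2$ and pass to the limit. The bookkeeping you flag (extension of $e^{tA}$ and $G_\beta$ to $L^\infty$, the $L^1$--$L^\infty$ duality, monotonicity of $t\mapsto e^{tA}1$) is exactly where the Markov property and $\sigma$-finiteness enter, and you handle it correctly.
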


We let the \emph{extended domain} $\dom\dex_e$ of a Dirichlet form $\dex$ to be the family of functions $u\in L^\infty(X,m)$ such that there exists $\{u_n\}_{n\in\N}\subset\dom\dex$, Cauchy sequence w.r.t. the scalar product \eqref{eq:scalar},  such that $u_n\longrightarrow u$ $m$-a.e. . 
The Dirichlet form $\dex$ can be extended to $\dom\dex_e$ as a non-negative definite symmetric bilinear form, by $\dex(u,u)=\lim_{n\rightarrow+\infty}\dex(u_n,u_n)$.

\begin{thm}[Theorems 1.6.3 and 1.6.5 in \cite{fukushima}]
	\label{thm:fukushimarecur}
	Let $\dex$ be a Dirichlet form. The following are equivalent.
	\begin{enumerate}
		\item $\dex$ is recurrent;
		\item there exists a sequence $\{u_n\}\subset \dom{\dex}$ satisfying
 	\[
 		0\le u_n\le 1,\quad \lim_{n\rightarrow+\infty} u_n =1 \quad m-\text{a.e.},
 	\]
 	such that
 	\[
 		\dex(u_n,v)\rightarrow 0 \quad \text{for any }v\in \dom\dex_e.
 	\]
		\item $1\in\dom\dex_e$  and $\dex(1,1)=0$, i.e., there exists a sequence $\{u_n\}\subset \dom{\dex}$ such that $\lim_{n\rightarrow +\infty} u_n=1 \quad m-\text{a.e.}$ and $\dex(u_n,u_n)\rightarrow 0$.
	\end{enumerate}
 \end{thm}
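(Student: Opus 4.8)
The statement is a cornerstone of the potential theory of Dirichlet forms, and the plan is to split it into an elementary functional-analytic equivalence, (2)$\Leftrightarrow$(3), and the substantive probabilistic equivalence relating either of these to the recurrence of the process in Definition~\ref{def:recurr}, which is where the regularity of $\dex$ and its associated Hunt process must be used in an essential way. I would organize the argument as (3)$\Rightarrow$(2)$\Rightarrow$(3) for the soft part, and then treat (3)$\Leftrightarrow$(1) separately.

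First I would dispatch (3)$\Rightarrow$(2): given $\{u_n\}\subset\dom\dex$ with $0\le u_n\le 1$, $u_n\to 1$ $m$-a.e.\ and $\dex(u_n,u_n)\to 0$, Cauchy--Schwarz for the non-negative form $\dex$ gives $|\dex(u_n,v)|\le\dex(u_n,u_n)^{1/2}\dex(v,v)^{1/2}\to 0$ for every $v\in\dom\dex_e$. Should the sequence furnished by $1\in\dom\dex_e$ fail to be $[0,1]$-valued, one first replaces $u_n$ by $(u_n\vee 0)\wedge 1$; by the unit-contraction property of Theorem~\ref{thm:normcontr}, which passes to $\dom\dex_e$ by a limiting argument, this does not increase the energy and preserves $m$-a.e.\ convergence to $1$.

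For (2)$\Rightarrow$(3) I would argue by soft functional analysis in the extended Dirichlet space. Let $\mathcal H_e$ be the Hilbert space obtained from $\dom\dex_e$ by quotienting out the kernel $\{v\colon\dex(v,v)=0\}$ and completing with respect to $\dex$ (the extended Dirichlet space being complete modulo this kernel). Each $u_n\in\dom\dex\subset\dom\dex_e$ induces a bounded functional $v\mapsto\dex(u_n,v)$ on $\mathcal H_e$ of norm $\dex(u_n,u_n)^{1/2}$; since $\dex(u_n,v)$ converges (to $0$) for every $v$, the uniform boundedness principle bounds $\sup_n\dex(u_n,u_n)$. Extracting a weakly convergent subsequence $u_n\rightharpoonup\bar u$ in $\mathcal H_e$, one gets $\dex(\bar u,v)=\lim_n\dex(u_n,v)=0$ for all $v$, hence $\bar u=0$ in $\mathcal H_e$; by Mazur's lemma there are convex combinations $w_m$ of $\{u_n\colon n\ge m\}$ with $\dex(w_m,w_m)\to 0$. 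As the $u_n$ lie in $[0,1]$ and tend to $1$ $m$-a.e., so do the $w_m$, whence $1\in\dom\dex_e$ with $\dex(1,1)=0$, i.e.\ (3).

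The hard part will be the equivalence of (3) with the probabilistic recurrence of Definition~\ref{def:recurr}, which I would not attempt from scratch: it rests on the full machinery of regular Dirichlet forms --- quasi-continuous versions, capacities, the associated Hunt process $\{X_t\}$, the strong Markov property --- and on the transience--recurrence dichotomy for irreducible forms. The bridge is the identification of recurrence through $0$-order hitting probabilities: for a set $B$ of positive capacity, $x\mapsto\mathbb P_x(\sigma_B<\infty)$ has a quasi-continuous version, and $1-\mathbb P_\cdot(\sigma_B<\infty)$ is $\dex$-harmonic off $B$ and belongs to $\dom\dex_e$; when $1\in\dom\dex_e$ with $\dex(1,1)=0$ this forces $\mathbb P_x(\sigma_B<\infty)=1$ for q.e.\ $x$, and iterating the strong Markov property at the successive return times yields $X_{t_n}\in B$ along some $t_n\to+\infty$, $\mathbb P_x$-a.s. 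Conversely, that recurrent behaviour precludes a non-constant bounded excessive function, in particular the $0$-order potential of a reference function whose existence characterizes transience, so the form cannot be transient and (3) holds. All of this is carried out in \cite{fukushima} (Theorems~1.6.3 and~1.6.5, together with the probabilistic counterparts in its Chapter~4), to which I would refer for the details.
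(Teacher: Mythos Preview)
The paper does not prove this theorem: it is quoted verbatim from \cite{fukushima} (Theorems~1.6.3 and~1.6.5) and used as a black box, with no argument supplied. There is therefore nothing in the paper to compare your proposal against.

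Your sketch is a reasonable outline of how one would reconstruct the result, and you correctly isolate the purely form-theoretic equivalence (2)$\Leftrightarrow$(3) from the probabilistic part (1)$\Leftrightarrow$(3). The implication (3)$\Rightarrow$(2) via Cauchy--Schwarz and unit contraction is standard and correct. For (2)$\Rightarrow$(3), your Banach--Steinhaus/Mazur argument is the right idea, though you should be a little more careful about the space on which you run it: the extended Dirichlet space modulo the null space of $\dex$ is complete precisely by the defining approximation property of $\dom\dex_e$ (every $\dex$-Cauchy sequence in $\dom\dex$ with an $m$-a.e.\ limit lands in $\dom\dex_e$), so the uniform boundedness principle does apply, but this deserves a sentence. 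For (1)$\Leftrightarrow$(3) you are right that the substance lies in the potential theory and the Hunt process correspondence, and deferring to \cite{fukushima} is exactly what the paper itself does.
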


 We conclude this preliminary part, by introducing a notion of restriction of closed forms associated to self-adjoint extensions of $\delcomp$.
 \begin{defn}
 	\label{def:neumann}
    Given a self-adjoint extension $A$ of $\delcomp$ and an open set $U\subset M$, we let the \emph{Neumann restriction} $\dex_A|_U$ of $\dex_A$ to be the form associated with the self-adjoint operator $A|_U$ on $L^2(U,\misura)$, obtained by putting Neumann boundary conditions on $\partial_U$.
 \end{defn}
 In particular, by Theorem~\ref{thm:kato} and an integration by parts, it follows that $\dom{\dex_A|_U}=\{ u|_U\mid u \in \dom {\dex_A}\}$.

 \subsection{Markovian extensions of $\delcomp$}
 \label{sec:markext}
 The bilinear form associated with $\delcomp$ is 
 \begin{equation*}
    \dex(u,v)= \int_{M_\alpha} \metr(\grad u, \grad v) \, \misura= \int_{M_\alpha} \left(\partial_x u\, \partial_x v + |x|^{2\alpha}\partial_\theta u\, \partial_\theta v\right)\,\misura, \quad
    \dom\dex=\cc M.
 \end{equation*}
 By \cite[Example 1.2.1]{fukushima}, $\dex$ is a Markovian form.
 The Friederichs extension is then associated with the form
 \begin{gather*}
    \dexf(u,v)=  \int_{M} \big(\partial_x u\, \partial_x v + |x|^{2\alpha}\partial_\theta u\, \partial_\theta v\big)\,\misura,\qquad
    \dom \dexf = \sobzero,
 \end{gather*}
 where the derivatives are taken in the sense of Schwartz distributions.
 By its very definition, and the fact that $\dom \dexf\cap C^\infty_c(M)=C^\infty_c(M)$, follows that $\dexf$ is always a regular Dirichlet form on $M$ (equivalently, on $M^+$ or on $M^-$).
 Its associated Markov process is absorbed by the singularity.

 The following Lemma will be crucial to study the properties of the Friederichs extension. 
 Let $M_0=(-1,1)\times \Tu$, $M_\infty=(1,+\infty)\times\Tu$ and recall the notion of Neumann restriction given in Definition~\ref{def:neumann}.
 
  \begin{lem}
  	\label{lem:sobzero}
    If $\alpha\le-1$, it holds that $1\in\dom{\dexf|_{M_0}}$.
   	Moreover, $1\notin \dom{\dexf|_{M_0}}_e$ if $\alpha>-1$ and $1\in \dom{\dexf|_{M_\infty}}_e$ if and only if $\alpha\ge -1$.
  \end{lem}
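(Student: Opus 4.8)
The strategy is to separate the analysis of the singularity at $x=0$ (on $M_0$) from the analysis of the behavior at $\infty$ (on $M_\infty$), and in each case to test the membership $1 \in \dom{\dexf|_U}$ (or in its extended domain) against the explicit form of $\dexf$. Since $\dexf(u,v) = \int (\partial_x u\,\partial_x v + |x|^{2\alpha}\partial_\theta u\,\partial_\theta v)\,\misura$, and the candidate function $1$ has vanishing gradient, the key point is whether $1$ can be approximated in the form-norm (or merely $m$-a.e. with $\dexf(u_n,u_n)\to 0$) by functions that actually lie in $\dom\dexf|_U$, i.e.\ restrictions of functions in $\sobzero = H^1_0(M,\misura)$. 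Because the $\theta$-direction is compact and contributes nothing to the gradient of $\theta$-independent functions, it suffices throughout to work with radial cutoffs $u_n(x,\theta)=\chi_n(|x|)$ and to compute $\dexf(u_n,u_n)=2\int_{\{x>0\}}(\chi_n'(x))^2\,|x|^{-\alpha}\,dx$ (times the length of $\Tu$), up to the appropriate domain of integration.

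First, for the claim $1\in\dom{\dexf|_{M_0}}$ when $\alpha\le-1$: here $M_0=(-1,1)\times\Tu$ with Neumann conditions at $|x|=1$, so $\dom\dexf|_{M_0}$ consists of restrictions to $M_0$ of $H^1_0(M,\misura)$ functions; such a function need only vanish (in the $H^1_0$ sense) near the singularity $\{x=0\}$, not at $|x|=1$. I would take $\chi_n$ equal to $1$ on $\{|x|>1/n\}$, equal to $0$ near $x=0$, and interpolating linearly (or more carefully) on $(1/n,2/n)$ and $(-2/n,-1/n)$. Then $\dexf(u_n,u_n)\sim \int_{1/n}^{2/n} n^2 x^{-\alpha}\,dx \sim n^2 \cdot n^{-1}\cdot n^{\alpha} = n^{1+\alpha}$, which for $\alpha\le-1$ stays bounded (tends to $0$ if $\alpha<-1$); combined with $u_n\to 1$ in $L^2(M_0,\misura)$ — note $m(M_0)<\infty$ precisely because $|x|^{-\alpha}$ is integrable near $0$ when $\alpha<1$ — one gets a Cauchy sequence in the form norm converging to $1$, hence $1\in\dom{\dexf|_{M_0}}$. (A small technical point: for $\alpha=-1$ the energies are merely bounded, not vanishing, so one should argue that a subsequence converges weakly in the form space and the weak limit, being $1$ in $L^2$, shows $1$ lies in the domain; or sharpen the cutoff using the logarithmic profile $\phinp$ from \eqref{def:phidn} to make the energy go to zero.)

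Second, for $1\notin\dom{\dexf|_{M_0}}_e$ when $\alpha>-1$: here the point is a capacity/Hardy-type lower bound. Any $u\in\dom\dexf|_{M_0}$ arises from $u\in H^1_0(M,\misura)$, and since $\alpha>-1$ the endpoint $0^\pm$ is regular, so such $u$ must genuinely have trace $0$ at $x=0$ (this is Remark~\ref{rmk:dirneu}/Proposition~\ref{prop:dmax}). I would show that a sequence $u_n\to 1$ a.e.\ in $M_0$ with bounded $\dexf|_{M_0}$-energy is impossible: fixing $x$ near $0$ and writing $u(x)=-\int_x^{1} \partial_s u(s)\,ds$ (using the Neumann, not Dirichlet, endpoint at $1$ — so more care is needed, but one can instead integrate from $x$ to a point where $u_n$ is close to $1$), Cauchy–Schwarz against the measure gives $|u(x)-u(\text{bulk})|^2 \lesssim \dexf(u,u)\cdot \int_x^{\cdot} s^{\alpha}\,ds$, and for $\alpha>-1$ the integral $\int_0^\cdot s^\alpha\,ds$ is finite, so a bound on the energy forces $u$ to remain within a controlled distance of its boundary value $0$ near $x=0$, contradicting $u_n\to 1$. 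Equivalently, $1$ has positive capacity relative to $\{x=0\}$ in this regime. I expect this direction to be the main obstacle, precisely because of the Neumann condition at $|x|=1$: one cannot just integrate a gradient bound against a Dirichlet boundary value, and must instead exploit that $u_n\to1$ forces $u_n$ to be close to $1$ somewhere in the bulk, then propagate control inward — a classical but slightly delicate argument.

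Finally, for $1\in\dom{\dexf|_{M_\infty}}_e \iff \alpha\ge-1$: here $M_\infty=(1,\infty)\times\Tu$ with Neumann condition at $x=1$ and the open end at $x=+\infty$, and by Theorem~\ref{thm:fukushimarecur}(3) the condition $1\in\dom\dexf|_{M_\infty,e}$ is exactly that there exist $u_n\to1$ a.e.\ with $\dexf(u_n,u_n)\to0$. Taking radial cutoffs $\chi_n$ supported in $(1,R_n)$ with $R_n\to\infty$, the optimal choice (minimizing $\int_1^{R_n}(\chi_n')^2 x^{-\alpha}\,dx$ subject to $\chi_n(1)=1$, $\chi_n(R_n)=0$) gives minimal energy comparable to $\big(\int_1^{R_n} x^{\alpha}\,dx\big)^{-1}$. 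This tends to $0$ as $R_n\to\infty$ iff $\int_1^\infty x^\alpha\,dx=+\infty$, i.e.\ iff $\alpha\ge-1$; if $\alpha<-1$ the integral converges and the infimum of the energy over all admissible cutoffs is a strictly positive constant ($=(\int_1^\infty x^\alpha dx)^{-1}>0$), so no such sequence exists and $1\notin\dom\dexf|_{M_\infty,e}$. Assembling the three computations — and being careful at the borderline $\alpha=-1$ where $\int^\infty x^{-1}dx$ diverges (logarithmically), giving membership — completes the proof.
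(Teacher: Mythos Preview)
Your proposal is correct and follows essentially the same strategy as the paper: reduce to radial (first Fourier component) test functions, construct explicit cutoffs to prove membership, and use a capacity lower bound (via Cauchy--Schwarz, equivalently the minimizer property of the equilibrium potential) to prove non-membership. The only notable technical difference is that the paper works throughout with the explicit \emph{harmonic} cutoffs $u_{r,R}$ (the $0$-equilibrium potentials solving $\delfz f=0$ with $f(r)=1$, $f(R)=0$), which are the exact energy minimizers; this makes the borderline case $\alpha=-1$ come out directly without the separate weak-compactness or logarithmic-profile correction you flag, and makes the capacity lower bound for the non-membership statements immediate. Your linear cutoffs give the correct order of magnitude away from the borderline, and both fixes you propose at $\alpha=-1$ are valid.
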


  \begin{proof}
  	To ease the notation, we let $\fdexfk$ to be the Dirichlet form associated to the Friederichs extension of $\delfk$.
  	In particular, for $k=0$,
  	\[
  		\fdexf(u,v)=\int_{\R\setminus\{0\}} \partial_x u\,\partial_x v\,|x|^{-\alpha}dx,\qquad \dom{\fdexf}=\sobfzero.
  	\]
	Let $\pi_k:\ldue\to H_k=\lduef$ be the projection on the $k$-th Fourier component.  	
	Then, from the rotational invariance of $\dom\dexf$ follows that
	\[
		\dom{\dexf}=\bigoplus_{k\in\Z} \dom{\fdexfk},\qquad \dexf(u,v)=\sum_{k\in\Z} \fdexfk(\pi_k u,\pi_k v).
	\]
	In particular, since $\pi_0 1=1$ and $\pi_k1=0$ for $k\neq 0$, follows that $1\in \dom{\dexf|_{M_0}}$ (resp. $1\in \dom{\dexf|_{M_\infty}}_e$) if and only if $1\in \dom{\fdexf|_{(0,1)}}$ (resp. $1\in \dom{\fdexf|_{(1,+\infty)}}_e$).
	Here, with abuse of notation, we denoted as $1$ both the functions $1:M\to \{1\}$ and $1:\R\to\{1\}$.
	Thus, to complete the proof of the lemma, it suffices to prove that $1\in\dom{\fdexf|_{(0,1)}}$ if $\alpha\le-1$, that $1\notin\dom{\fdexf|_{(0,1)}}_e$ if $\alpha\ge-1$ and that $1\in \dom{\fdexf|_{(1,+\infty)}}_e$ if and only if $\alpha\ge -1$.

    For any $0<r<R<+\infty$, let $f_{r,R}^\alpha$ be the only solution to the Cauchy problem
    \[
    	\begin{cases}
    	    \delfz f = 0,\\
    	    f(r)=1,\qquad f(R)=0.
    	\end{cases}
    \]
    Namely,
    \[    	
    f^\alpha_{r,R}(x)=
    	\begin{cases}
    	    \dfrac{R^{1+\alpha} - x^{1+\alpha}}{R^{1+\alpha}-r^{1+\alpha}}&\qquad\text{if } \alpha\neq -1,\\
    	    \\
    	    \dfrac {\log\left( \frac R x\right)}{\log\left( \frac R r\right)}&\qquad\text{if } \alpha= -1.
    	\end{cases}
   	\]
   	Then, the $0$-equilibrium potential (see \cite{fukushima} and Remark~\ref{rmk:equilibriumpotential}) of $[0,r]$ in $[0,R]$, is given by
     \begin{equation}
     	\label{eq:potential}
    	u_{r,R}(x)=
    	\begin{cases}
			    1 & \qquad \text{if } 0\le x \le r,\\
			    f_{r,R}^\alpha(x)& \qquad \text{if } r< x \le R,\\
			    0 & \qquad \text{if } x > R.\\    	
    	\end{cases}
   	\end{equation}
   	It is a well-known fact that $u_{r,R}$ is the minimizer for the capacity of $[0,r]$ in $[0,R)$. 
   	Namely, for any locally Lipschitz function $v$ with compact support contained in $[0,R]$ and such that $v(x)=1$ for any $0<x<r$, it holds
   	\begin{equation}
   		\label{eq:capacity}
   		\int_0^{+\infty} |\partial_x u_{r,R}|^2 x^{-\alpha}\,dx\le\int_0^{+\infty} |\partial_x v|^2 x^{-\alpha}\,dx
   	\end{equation}
    Since it is compactly supported on $[0,+\infty)$ and locally Lipschitz, it follows that $u_{r,R}\in \dom{\fdexf|_{(1,+\infty)}}$ and $1-u_{r,R}\in \dom{\fdexf|_{(0,1)}}$ for any $0<r<R<+\infty$.
        
    Consider now $\alpha\ge -1$, and let us prove that $1\in \dom{\fdexf|_{(1,+\infty)}}_e$.
    To this aim, it suffices to show that there exists a sequence $\{u_n\}_{n\in\N}\subset \dom{\fdexf|_{(1,+\infty)}}=\{u|_{(1,+\infty)}\mid u\in H^1((0,+\infty),x^{-\alpha}dx)\}$ such that $u_n\longrightarrow 1$ a.e. and $\fdexf|_{(1,+\infty)}$.
    Let 
    \[
    	u_n=
    	\begin{cases}
    	    u_{n,{2n}}\qquad & \text{ if }\alpha\neq-1,\\
    	    u_{n,n^2}\qquad & \text{ if }\alpha=-1.\\
    	\end{cases}
    \]
    It is clear that $u_n\longrightarrow 1$ a.e., moreover, a simple computation shows that
     \[
    	\fdexf|_{(1,+\infty)}(u_n,u_n)=\int_1^{+\infty} |\partial_x u_n|^2\,x^{-\alpha}\,dx =  
    	\begin{cases}
    		\frac{1+\alpha} {2^{1+\alpha}-1 }\,n^{-(1+\alpha)}\qquad&\text{if }\alpha\neq-1,\\
    		\frac 1 {\log\left(n\right)} \qquad&\text{if }\alpha=-1.
    	\end{cases}
    \]
    Hence $\fdexf|_{(1,+\infty)}\longrightarrow 0$ if $\alpha\ge -1$, proving that $1\in\dom{\fdexf|_{(1,+\infty)}}_e$.
    
   	We now prove that $1\in \dom{\fdexf|_{(0,1)}}$ if $\alpha\le -1$.
    Consider the following sequence in $H^1((0,1),x^{-\alpha}dx)$,
    \[
    	u_n= 
    	\begin{cases}
    	    u_{\nicefrac{1}{2n},\nicefrac{1}{n}}\qquad & \text{ if }\alpha\neq-1,\\
    	    u_{\nicefrac{1}{n^2},\nicefrac{1}{n}}\qquad & \text{ if }\alpha=-1.\\
    	\end{cases}
    \]
    A direct computation of $\int_0^1|\partial_x u_n|^2x^{-\alpha}dx$, the fact that $\supp u_n\subset [0,1/n]$ and $0\le u_n\le 1$, prove that $u_n\longrightarrow 0$ in $H^1((0,1),x^{-\alpha}dx)$.
    Since $1-u_n\in \dom{\fdexf|_{(0,1)}}$, which is closed, this proves that $1-u_n\longrightarrow 1$ in $\dom{\fdexf|_{(0,1)}}$, and hence the claim.

    To complete the proof, it remains to show that $1\notin\dom{\fdexf|_{(1,+\infty)}}_e$ if $\alpha<-1$.
    The same argument can be then used to prove that $1\notin\dom{\fdexf|_{(0,1)}}_e$ if $\alpha>-1$.
    We proceed by contradiction, assuming that there exists a sequence $\{v_n\}_{n\in\N}\subset\dom{\fdexf|_{(1,+\infty)}}$ such that $v_n\longrightarrow 1$ a.e. and  $\fdexf|_{(1,+\infty)}(v_n,v_n)\longrightarrow 0$.
    Since the form $\fdexf|_{(1,+\infty)}$ is regular on $[1,+\infty)$, we can take $v_n\in C^\infty_c([1,+\infty))$.
    Moreover, we can assume that $v_n(1)=1$ for any $n\in\N$.
    In fact, if this is not the case, it suffices to consider the sequence $\tilde v_n(x)=v_n(x)/v_n(1)$.
    Let $R_n>0$ be such that $\bigcup_{m\le n}\supp v_m\subset [1,R_n]$.
    Moreover, extend $v_n$ to $1$ on $(0,1)$, so that $\fdexf|_{(1,+\infty)}(v_n,v_n)=\int_0^{+\infty}|\partial_x v_n|^2x^{-\alpha}dx$.
    Since the same holds for $u_{1,R_n}$, by \eqref{eq:capacity}, the fact that $R_n\longrightarrow +\infty$ and $\alpha<-1$, we get
    \[
    	\lim_{n\rightarrow+\infty}\fdexf|_{(1,+\infty)}(v_n,v_n)\ge \lim_{n\rightarrow+\infty}\fdexf|_{(1,+\infty)}(u_{1,R_n},u_{1,R_n})=\lim_{n\rightarrow+\infty}\frac{1+\alpha}{R_n^{1+\alpha}-1} =-({1+\alpha})>0.
    \]
    This contradicts the fact that $\fdexf|_{(1,+\infty)}(v_n,v_n)\longrightarrow 0$, completing the proof.
  \end{proof}

  \begin{rmk}
  	\label{rmk:equilibriumpotential}
  	The $0$-equilibrium potential defined in \eqref{eq:potential} admits a probabilistic interpretation, \cite{GrigorAnalitic}.
  	Namely, it is the probability that the Markov process associated with $\delfz$ and starting from $x$, exits the first time from the interval $\{r<x<R\}$ through the inner boundary $\{x=r\}$.
  \end{rmk}

  It is possible to define a semi-order on the set of the Markovian extensions of $\delcomp$ as follows. 
 Given two Markovian extensions $A$ and $B$, we say that $A\subset B$ if $\dom{\dex_A}\subset \dom{\dex_B}$ and $\dex_A(u,u)\ge \dex_B(u,u)$ for any $u\in \dom{\dex_A}$.
 With respect to this semi-order, the Friederichs extension is the minimal Markovian extension.
 Let $\delm$ be the maximal Markovian extension (see \cite{fukushima}). 
 This extension is associated with the Dirichlet form $\dexm$ defined by
 \begin{gather*}
    \dexm(u,v)=  \int_{M} \big(\partial_x u\, \partial_x v + |x|^{2\alpha}\partial_\theta u\, \partial_\theta v\big)\,\misura,\\
    \dom \dexm = \{ u\in\ldue\mid \dexm(u,u)<+\infty\} = \sob,
 \end{gather*}
 where the derivatives are taken in the sense of Schwartz distributions.
 We remark that $\dexm$ is a regular Dirichlet form on $\overline {M^+} = M_\alpha\setminus M^-$ and $\overline {M^-} = M_\alpha\setminus M^+$ (see, e.g., \cite[Lemma~3.3.3]{fukushima}). 
 Its associated Markov process is reflected by the singularity.

 When $\delcomp$ has only one Markovian extension, i.e., whenever $\delf=\delm$, we say that it is \emph{Markov unique}.
 Clearly, if $\delcomp$ is essentially self-adjoint, it is also Markov unique.
 The next proposition shows that essential self-adjointness is a strictly stronger property than Markov uniqueness.
 
\begin{prop}\label{prop:onlyfried}
    The operator $\delcomp$ is Markov unique if and only if $\alpha\notin(-1,1)$.
\end{prop}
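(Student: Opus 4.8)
The plan is to translate Markov uniqueness into a statement about Sobolev spaces, decompose it along Fourier components, and treat the component $k=0$ by hand. Since $\dexf$ and $\dexm$ are given by the same Dirichlet integral $\int_{M_\alpha}\metr(\grad u,\grad v)\,\misura$ and differ only in their domains, Theorem~\ref{thm:kato} shows that $\delf=\delm$ if and only if $\dexf=\dexm$, i.e. if and only if $\sobzero=\sob$. Using the rotational invariance of both spaces and the fact that finite sums $\sum_k f_k(x)e_k(\theta)$ with $f_k\in\cc{\R\setminus\{0\}}$ are dense in $\cc M$ for the $\sob$-norm (so that, as in the proof of Lemma~\ref{lem:sobzero}, both $\dexf$ and $\dexm$ decompose orthogonally along the $e_k$), the equality $\sobzero=\sob$ is equivalent to the coincidence of the Friedrichs and maximal Markovian forms of $\widehat\Delta_k$ for every $k\in\Z$; in particular it forces $\sobfzero=\sobf$ for $k=0$, and is implied by it together with the analogous equalities for $k\neq0$.

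\textbf{Case $\alpha\notin(-1,1)$.} If $\alpha\le-3$ or $\alpha\ge1$, Theorem~\ref{thm:sa} gives that $\delcomp$ is essentially self-adjoint, hence has a unique self-adjoint, a fortiori a unique Markovian, extension. If $\alpha\in(-3,-1]$, Theorem~\ref{thm:sa} gives that $\widehat\Delta_k$ is essentially self-adjoint for every $k\neq0$, so that its Friedrichs and maximal Markovian extensions coincide and the $k$-th components of $\dexf$ and $\dexm$ agree; it remains to prove $\sobfzero=\sobf$ when $\alpha\le-1$. I would establish density of $\cc{\R\setminus\{0\}}$ in $\sobf$ endpoint by endpoint. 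At $\pm\infty$ this is automatic: for $f\in\sobf$ the cut-offs $f\chi_n$, with $\chi_n\equiv1$ on $[1,n]$, $\chi_n\equiv0$ past $2n$ and $|\chi_n'|\le 2/n$, converge to $f$ in $\sobf$ because the only nonroutine term satisfies $\int_n^{2n}f^2|\chi_n'|^2|x|^{-\alpha}dx\le \tfrac{4}{n^2}\|f\|^2_{\lduef}\to0$. At $0^\pm$ I would use the equilibrium potentials $u_{r,R}$ from the proof of Lemma~\ref{lem:sobzero}: for $\alpha\le-1$ the sequence $u_n=u_{1/(2n),\,1/n}$ (or $u_{1/n^2,\,1/n}$ if $\alpha=-1$) has $u_n\equiv1$ near $0$, $\supp u_n\to\{0\}$ and $u_n\to0$ in $\sobf$. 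After replacing an arbitrary $f\in\sobf$ by the truncations $f_M=\max(-M,\min(M,f))$, which converge to $f$ in $\sobf$ and are bounded, the functions $f_M(1-u_n)$ vanish near $0$ and converge to $f_M$ in $\sobf$: indeed $\|f_M\,u_n'\|_{\lduef}\le\|f_M\|_\infty\|u_n'\|_{\lduef}\to0$, while the remaining terms are controlled by the $L^2$-mass of $f$ on $\supp u_n$; mollifying produces the desired $\cc{\R\setminus\{0\}}$ approximants. Hence $\sobfzero=\sobf$, and $\delcomp$ is Markov unique.

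\textbf{Case $\alpha\in(-1,1)$.} Here $0^\pm$ are regular endpoints for $\widehat\Delta_0$ in the sense of Definition~\ref{defn:regular}. Since $\int_0^\delta|x|^{\alpha}dx<\infty$, Cauchy--Schwarz shows that for $f\in\sobf$ the limit $f(0^+)=\lim_{x\downarrow0}f(x)$ exists and that $f\mapsto f(0^+)$ is a bounded functional on $\sobf$; it vanishes on $\cc{\R\setminus\{0\}}$, hence on $\sobfzero$. Choosing $\phi\in C^\infty(\R)$ compactly supported with $\phi\equiv1$ on $[-1,1]$ and regarding it as a $\theta$-independent function, we get $\phi\in\sob$ whereas its $k=0$ component has $\phi(0^+)=1\neq0$, so $\phi\notin\sobfzero$ and thus $\phi\notin\sobzero$. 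Therefore $\sobzero\subsetneq\sob$ and $\delcomp$ is not Markov unique.

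The delicate point is the $k=0$ part for $\alpha\in(-3,-1]$: one must show that the whole weighted space $\sobf$, not merely the constant functions, collapses onto $\sobfzero$ near the origin, which is why the truncation step is needed to handle the genuinely unbounded elements of $\sobf$ that occur there (e.g. $\log\log(1/|x|)$-type singularities when $\alpha=-1$), in combination with the quantitative decay of the equilibrium-potential energies established in Lemma~\ref{lem:sobzero}.
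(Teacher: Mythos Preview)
Your argument is correct. The translation of Markov uniqueness into the Sobolev identity $\sobzero=\sob$, the Fourier decomposition of both form domains, the trace argument for $\alpha\in(-1,1)$, and the cut-off/truncation scheme for $\alpha\in(-3,-1]$ all go through; the point you flag about essential self-adjointness of $\widehat\Delta_k$ forcing $H^1_{0,k}=H^1_k$ for $k\neq0$ is justified because the quadratic form $\int(|u'|^2+k^2|x|^{2\alpha}|u|^2)|x|^{-\alpha}dx$ on its natural domain is closed and yields a self-adjoint extension of $\widehat\Delta_k$, which must therefore coincide with the Friedrichs one.

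The paper's proof is shorter in the $\alpha\in(-3,-1]$ case because it leverages the structure theorem for $\dmax(\widehat\Delta_0)$ (Theorem~\ref{thm:sae}) rather than proving $\sobfzero=\sobf$ directly: it observes that $\dexm(\phi_N^\pm,\phi_N^\pm)=\int_0^1|x|^{\alpha}dx=\infty$ for $\alpha\le-1$, so $\phi_N^\pm\notin\dom\dexm\supset\dom\delm$; since $\dmaxf=\dminf+\spn\{\phi_D^\pm,\phi_N^\pm\}$, the first Fourier component of $\delm$ is forced to be $(\widehat\Delta_0)_F$. For $\alpha\in(-1,1)$ the paper invokes Lemma~\ref{lem:sobzero} (that $1\notin\dom{\dexf|_{M_0}}_e$) to conclude $\phi_D\notin\sobzero$, which is the capacity version of your trace argument. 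Your route is more elementary and self-contained---it avoids the full classification of self-adjoint extensions and the extended-domain machinery---at the cost of the explicit approximation and truncation step; the paper's route is quicker once Theorem~\ref{thm:sae} and Lemma~\ref{lem:sobzero} are in hand.
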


\begin{proof}
	As observed above, the statement is an immediate consequence of Theorem~\ref{thm:sa} for $\alpha\le-3$ and $\alpha\ge 1$.
	If $\alpha\in(-3,-1]$, since by Theorem~\ref{thm:sa} all $\delfk$ for $k\neq 0$ are essentially self-adjoint, it holds that $\delm=\widehat A_0 \oplus(\bigoplus_{k\in\N}\delfk)$ for some self-adjoint extension $\widehat A_0$ of $\delfz$.
	Recall the definition of $\phidpm$ and $\phinpm$ given in \eqref{def:phidn} and with abuse of notation let $\phidpm(x,\theta)=\phidpm(x)$ and $\phinpm(x,\theta)=\phinpm(x)$.
    Since $\dexm(\phin^\pm,\phin^\pm)=+\infty$ if and only if $\alpha\le-1$,
    we get that $\phinp,\phinm\notin\dom\dexm\supset\dom {\delm}$ if $\alpha\le-1$.
    Hence, by Theorem~\ref{thm:sae}, it holds that $\widehat A_0=(\delfz)_F$ and hence that $\delm=\delf$.

    On the other hand, if $\alpha\in(-1,1)$, the result follows from Lemma~\ref{lem:sobzero}. 
    In fact, it implies that $\phid\notin\sobzero=\dom\dexf$ but, since $\dexm(\phid,\phid)<+\infty$, we have that $\phid\in \dom \dexm$.
    This proves that $\delf\subsetneqq \delm$.
\end{proof}

 By the previous result, when $\alpha\in (-1,1)$ it makes sense to consider the \bridge extension, associated to the operator $\deltabridge$ and the form $\dexb$, defined by
\begin{gather*}
    \dexb(u,v)=  \int_{M_\alpha} \big(\partial_x u\, \partial_x v + |x|^{2\alpha}\partial_\theta u\, \partial_\theta v\big)\,\misura,\\
    \dom \dexb = \{ u\in \sob \mid u(0^+,\theta)=u(0^-,\theta) \text{ for a.e. }\theta\in\Tu\}.
 \end{gather*}
From Theorem~\ref{thm:normcontr} and the fact that $\dexb=\dexm|_{\dom \dexb}$ follows immediately that $\dexb$ is a Dirichlet form, and hence $\delf\subset\delb\subset\delm$.
Moreover, due to the regularity of $\dexm$ and the symmetry of the boundary conditions appearing in $\dom\dexb$, follows that $\dexb$ is regular on the whole $M_\alpha$.
Its associated Markov process can cross, with continuous trajectories, the singularity.

 We conclude this section by specifying the domains of the Markovian self-adjoint extensions associated with $\dexf$, $\dexm$ and, when it is defined, $\dexb$.

 \begin{prop}
 	\label{prop:domains}
    It holds that $\dom {\delf} = \sobduezero$, while
    \begin{equation*}
	\dom {\delm} = \{ u\in\sob \mid (\Delta u,v)=(\nabla u,\nabla v) \text{ for any } v\in \sob\}.
	\end{equation*}
	Moreover, if $\alpha\in(-1,1)$, the domain of $\delb$ is 
	\[	
	\dom {\delc} = \{ \sobduec \mid u(0^+,\cdot)=u(0^-,\cdot),\, \lim_{x\rightarrow 0^+} |x|^{-\alpha}\partial_x u(x,\cdot)=\lim_{x\rightarrow 0^-} |x|^{-\alpha}\partial_x u(x,\cdot) \text{ for a.e. }\theta\in\Tu\}. 
	\]
 \end{prop}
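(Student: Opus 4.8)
The plan is to treat the three extensions in turn, using the correspondence between non-positive self-adjoint operators and closed symmetric forms (Theorem~\ref{thm:kato}) together with the explicit form-domains recorded in Section~\ref{sec:markext}. For $\delf$, the identity $\dom{\delf}=\sobduezero$ was already stated after the definition of $\deld$ and follows from $\dom{\dexf}=\sobzero$: by Theorem~\ref{thm:kato}, $u\in\dom{\delf}$ iff $u\in\sobzero$ and there is $w\in\ldue$ with $\dexf(u,v)=(w,v)$ for all $v\in\sobzero$; testing against $v\in\cc M$ shows $w=\Delta u$ in the distributional sense, so $\Delta u\in\ldue$, i.e.\ $u\in\sobduezero$. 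The reverse inclusion is an integration by parts, legitimate since $u\in\sobzero$. For $\delm$, the asserted description is essentially a restatement of Theorem~\ref{thm:kato} applied to $\dexm$ with $\dom{\dexm}=\sob$: $u\in\dom{\delm}$ iff $u\in\sob$ and $\dexm(u,v)=(\Delta u,v)=(\nabla u,\nabla v)$ for all $v\in\sob$, where again $\Delta u$ is a priori distributional but lies in $\ldue$ by the defining property; so here there is little to prove beyond unwinding definitions.

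The substantive part is the domain of $\delb$ for $\alpha\in(-1,1)$. First I would Fourier-decompose: since $\dom{\dexb}$ is rotationally invariant, $\delb=\bigoplus_{k\in\Z}(\delfk)_B$ where $(\delfk)_B$ is the self-adjoint extension of $\delfk$ associated to the form with domain $\{u\in\sobduef\cap H^1_{\mathrm{loc}}\colon u(0^+)=u(0^-)\}$ (the $|x|^{2\alpha}\partial_\theta$ term only contributes the $k^2|x|^{2\alpha}$ potential on each component). By Theorem~\ref{thm:kato}, $u_k\in\dom{(\delfk)_B}$ iff $u_k$ is in this form-domain and $\delfk u_k\in\lduef$ with $(\delfk u_k,v)=\fdexfk(u_k,v)$ for all $v$ in the form-domain. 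I would then integrate by parts on $(0,\infty)$ and $(-\infty,0)$ separately: the boundary terms at $\pm\infty$ vanish because those endpoints are limit-point (Lemma~\ref{lem:limitpoint}, giving $[u_k,v](\pm\infty)=0$), and the boundary terms at $0^\pm$ collapse, using $v(0^+)=v(0^-)$, to $\bigl(\lim_{x\to0^+}|x|^{-\alpha}\partial_x u_k(x)-\lim_{x\to0^-}|x|^{-\alpha}\partial_x u_k(x)\bigr)v(0)$. Since $v(0)$ is an arbitrary real number (and $v$ is otherwise free), this forces the continuity of the flux $\lim_{x\to0^+}|x|^{-\alpha}\partial_x u_k=\lim_{x\to0^-}|x|^{-\alpha}\partial_x u_k$; conversely, when both matching conditions hold the form identity is satisfied, so $u_k\in\dom{(\delfk)_B}$. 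Here I would invoke Theorem~\ref{thm:sae} and Remark~\ref{rmk:dirneu} to know that for $u\in\dmax(\delfk)$ (equivalently $u\in\sobduef$, by Proposition~\ref{prop:dmax}) the traces $u(0^\pm)$ and the fluxes $\lim_{x\to0^\pm}|x|^{-\alpha}\partial_x u(x)$ exist and are finite, which is what makes the boundary conditions meaningful; note $\alpha>-1$ guarantees $0^\pm$ is a regular endpoint so $u_D^\pm=u(0^\pm)$.

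Finally I would reassemble: summing over $k$ and using Parseval (as in the proof of Proposition~\ref{prop:dmax}), $u\in\dom{\delb}$ iff $u,\Delta u\in\ldue$ — i.e.\ $u\in\sobduec$ — and the traces and fluxes match on each Fourier component, which by continuity/density is equivalent to $u(0^+,\cdot)=u(0^-,\cdot)$ and $\lim_{x\to0^+}|x|^{-\alpha}\partial_x u(x,\cdot)=\lim_{x\to0^-}|x|^{-\alpha}\partial_x u(x,\cdot)$ for a.e.\ $\theta\in\Tu$. The main obstacle I anticipate is the justification of the integration by parts and the existence of the pointwise limits at $0^\pm$ for $u\in\sobduec$: one must be careful that membership in $\sobduec$ (a condition on $u$ and $\Delta u$ globally) really does place each Fourier component $u_k$ in $\dmax(\delfk)=\sobduef$ so that the Sturm–Liouville boundary-value theory of Theorem~\ref{thm:sae} applies, and that the a.e.-$\theta$ statement is the honest translation of the componentwise conditions — this is where the argument needs the most care rather than the formal manipulation of boundary terms.
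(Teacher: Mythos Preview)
Your approach is correct but takes a genuinely different route from the paper. The paper's proof is extremely brief: it invokes the characterization in Theorem~\ref{thm:kato} that to identify the operator associated to a closed form $\dex$ it suffices to check $\dom A\subset\dom\dex$ and $\dex(u,v)=(-Au,v)$ for $u\in\dom A$, $v\in\dom\dex$. For $\delf$ and $\delm$ this is dispatched in one line each, essentially as you do. For $\delb$, the paper does \emph{not} Fourier-decompose: it performs a single integration by parts directly on $M_\alpha$, obtaining
\[
\dexb(u,v)=(-\Delta u,v)-\int_\Tu v\,|x|^{-\alpha}\partial_x u\Big|_{x=0^-}^{0^+}\,d\theta,
\]
and observes that the boundary integral vanishes precisely when the flux-matching condition holds (using $v(0^+,\theta)=v(0^-,\theta)$). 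That is the entire argument.

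Your componentwise route via Sturm--Liouville theory (Theorem~\ref{thm:sae}, Remark~\ref{rmk:dirneu}, Lemma~\ref{lem:limitpoint}) is more elaborate but has the virtue of making the existence of the traces $u(0^\pm)$ and the fluxes $\lim_{x\to0^\pm}|x|^{-\alpha}\partial_x u$ fully rigorous, since you are placing each $u_k$ in $\dmax(\delfk)$ where these limits are guaranteed. The paper's direct 2D integration by parts implicitly assumes these boundary values make sense for $u\in\sobduec$, which is exactly the point you flag as the ``main obstacle''; your approach resolves it explicitly rather than sweeping it into the notation. The cost is that you must then argue the a.e.-$\theta$ statement follows from the componentwise conditions, which the paper avoids by never leaving the $\theta$-variable in the first place.
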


 \begin{proof}
 	In view of Theorem~\ref{thm:kato}, to prove that $A$ is the operator associated with $\dex_A$ it suffices to prove that $\dom A\subset \dom {\dex_A}$ and that $\dex_A(u,v)=(-Au,v)$ for any $u\in\dom A$ and $v\in \dom{\dex_A}$.
 	The requirement on the domain is satisfied by definition in all three cases.
 	We proceed to prove the second fact.

 	\emph{Friedrichs extension.} By integration by parts it follows that $\dexf(u,v)=(-\delf u,v)$ for any $u,v\in \cc M$, and this equality can be extended to $u\in\sobduezero=\dom{\delf}$ and $v\in \sobzero=\dom\dexf$.  

 	\emph{Neumann extension.} The property that $\dexm(u,v)=(-\delm u,v)$ for any $u\in\dom \delm$ and $v\in \dom{\dexm}$ is contained in the definition.

 	\emph{Bridging extension.} 
 	By an integration by parts, it follows that
 	\[
 		\int_{\clm} \big(\partial_x u\, \partial_x v + x^{2\alpha}\partial_\theta u\, \partial_\theta v\big)\,\misura = (-\delb u,v) - \int_\Tu v |x|^{-\alpha} \partial_x u \big|_{x=0^-}^{0^+}\,d\theta = (-\delb u,v). 
 	\]
  \end{proof}

\subsection{Stochastic completeness and recurrence on $M_\alpha$}
 \label{sec:stoccompl}

We are interested in localizing the properties of stochastic completeness and recurrence of a Markovian self-adjoint extension $A$ of $\delcomp$.
Due to the already mentioned repulsing properties of Neumann boundary conditions, the natural way to operate is to consider the Neumann restriction introduced in Definition~\ref{def:neumann}.

Observe that, if $U\subset M$ is an open set such that $\bar U\cap (\{-\infty,0,+\infty\}\times\Tu)=\varnothing$, then the Neumann restriction $\dex_A|_U$ is always recurrent on $U$.
In fact, in this case, there exist two constants $0<C_1<C_2$ such that $C_1 dx\,d\theta\le \misura\le C_2dx\,d\theta$ on $U$ and clearly $1\in \dom{\dex_A|_U}=H^1(U,dx\,d\theta)$, that by Theorem~\ref{thm:fukushimarecur} implies the recurrence.
For this reason, we will concentrate only on the properties ``at $0$'' or ``at $\infty$''.

 \begin{defn}
 	\label{def:stocorigin}
    Given a Markovian extension $A$ of $\delcomp$, we say that it is \em stochastically complete at $0$ \em (resp. \em recurrent at $0$\em) if its Neumann restriction to $M_0=(-1,1)\times\Tu$, is stochastically complete (resp. recurrent).
    We say that $A$ is exploding at $0$ if it is not stochastically complete at $0$.
    Considering $M_\infty=(1,\infty)\times\Tu$, we define stochastic completeness, recurrence and explosiveness at $\infty$ in the same way.
 \end{defn}

 In order to justify this approach, we will need the following.

 \begin{prop}
 	\label{prop:unione}
    A Markovian extension $A$ of $\delcomp$ is  stochastically complete (resp. recurrent) if and only if it is stochastically complete (resp. recurrent) both at $0$ and at $\infty$. 
 \end{prop}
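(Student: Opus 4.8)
The plan is to prove the two implications separately, relying on the characterizations of stochastic completeness and recurrence in terms of ``good sequences'' provided by Theorems~\ref{thm:fukushima} and \ref{thm:fukushimarecur}, together with the fact that $\dom{\dex_A|_{M_0}}$ and $\dom{\dex_A|_{M_\infty}}$ are exactly the restrictions to $M_0$, resp.\ $M_\infty$, of functions in $\dom{\dex_A}$ (by the observation following Definition~\ref{def:neumann}). The easy direction is that stochastic completeness (resp.\ recurrence) of $A$ implies the same at $0$ and at $\infty$: if $\{u_n\}\subset\dom{\dex_A}$ is a sequence with $0\le u_n\le 1$, $u_n\to 1$ a.e.\ and $\dex_A(u_n,v)\to 0$ for all $v$ in the relevant class, then the restrictions $u_n|_{M_0}$ (resp.\ $u_n|_{M_\infty}$) lie in $\dom{\dex_A|_{M_0}}$ (resp.\ $\dom{\dex_A|_{M_\infty}}$), still satisfy the bounds and the a.e.\ convergence, and since $\dex_A|_{M_0}(u_n|_{M_0},v) \le \dex_A(u_n,\tilde v)$ for a suitable extension $\tilde v\in\dom{\dex_A}$ of $v$ (using that the Dirichlet integral is a sum of non-negative contributions over $M_0$, the strip $(-1,1)^c$, etc.), the forms $\dex_A|_{M_0}$ and $\dex_A|_{M_\infty}$ inherit the ``good sequence'' and hence the desired property; for recurrence one argues identically with $\dom{(\dex_A|_{M_0})}_e$, noting that elements of the extended domain are limits of Cauchy sequences whose restrictions one extends back.

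For the converse, suppose $A$ is stochastically complete (resp.\ recurrent) both at $0$ and at $\infty$. Fix $\{u_n^0\}\subset\dom{\dex_A|_{M_0}}$ and $\{u_n^\infty\}\subset\dom{\dex_A|_{M_\infty}}$ realizing the property on each piece. Introduce a fixed cutoff landscape: a smooth $\chi_0$ supported in $(-1,1)\times\Tu$ with $\chi_0\equiv 1$ on $(-1/2,1/2)\times\Tu$, a smooth $\chi_\infty$ supported in $(1,+\infty)\times\Tu$ with $\chi_\infty\equiv 1$ on $(2,+\infty)\times\Tu$, and note that on the ``core'' $K=\{1/2\le |x|\le 2\}\times\Tu$ the measure $\misura$ is comparable to Lebesgue measure so that $1\in\dom{\dex_A|_{\mathrm{int}\,K}}$ trivially (as in the observation preceding Definition~\ref{def:stocorigin}). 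The candidate sequence is $w_n=\chi_0\,u_n^0 + \chi_\infty\,u_n^\infty + (\text{a fixed plateau function equal to }1\text{ on }K)$, suitably truncated by $\min\{1,\cdot\}$ so that $0\le w_n\le 1$; by construction $w_n\in\dom{\dex_A}$ (it is a sum of three pieces, each the product of a bounded smooth function with a form-domain element, and form domains are stable under such operations since $\dex_A$ is a Dirichlet form — use the Leibniz rule and that $\dex_A$ is comparable to $\dexm$ on its domain) and $w_n\to 1$ a.e.\ on all of $M$. It remains to estimate $\dex_A(w_n,v)$ for $v\in\dom{\dex_A}$ (resp.\ $v\in\dom{(\dex_A)}_e$): expanding by bilinearity, the main terms $\dex_A(\chi_0 u_n^0, \chi_0 v|_{M_0})$ and $\dex_A(\chi_\infty u_n^\infty, \chi_\infty v|_{M_\infty})$ are controlled by $\dex_A|_{M_0}(u_n^0,\cdot)\to 0$ and $\dex_A|_{M_\infty}(u_n^\infty,\cdot)\to 0$, the cross/commutator terms involving $\nabla\chi_0,\nabla\chi_\infty$ are supported on the core $K$ where everything is a harmless bounded-coefficient $H^1$ computation, and the plateau term contributes zero Dirichlet energy. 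A Cauchy–Schwarz step against $\dex_A(w_n,w_n)$ (which one shows stays bounded, or goes to $0$ in the recurrence case by the same splitting applied with $v=w_n$) closes the argument, and Theorems~\ref{thm:fukushima}/\ref{thm:fukushimarecur} then yield stochastic completeness (resp.\ recurrence) of $A$.

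The main obstacle I anticipate is the bookkeeping around the cutoff functions: one must verify that multiplying a form-domain element by a smooth bounded function keeps it in $\dom{\dex_A}$ and gives the expected Leibniz expansion of $\dex_A$, and that the commutator terms genuinely live only on the compact core where the geometry is regular — this requires knowing that $\dex_A$ agrees with the Dirichlet-integral form $\int_{M_\alpha}(\partial_x u\,\partial_x v + |x|^{2\alpha}\partial_\theta u\,\partial_\theta v)\,\misura$ on $\dom{\dex_A}$, which holds because every Markovian extension sits between $\dexf$ and $\dexm$, all of which carry the same integrand. A secondary subtlety is the recurrence case, where one tests against $v\in\dom{(\dex_A)}_e$ rather than $\dom{\dex_A}$; here I would first reduce to testing against a dense-enough subclass (e.g.\ bounded elements of $\dom{\dex_A}$) using the definition of the extended domain, and then note $v|_{M_0}\in\dom{(\dex_A|_{M_0})}_e$ and likewise at $\infty$, so the per-piece hypotheses still apply.
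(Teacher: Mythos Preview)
Your proposal is essentially correct, but for the converse implication the paper takes a noticeably cleaner route that avoids exactly the bookkeeping you flag as the main obstacle. Rather than introducing cutoffs $\chi_0,\chi_\infty$, a plateau, and then tracking Leibniz commutator terms on the core, the paper observes that the constant function $1$ already satisfies the Neumann boundary condition imposed at the regular interface $\partial M_0=\partial M_\infty=\{|x|=1\}\times\Tu$; hence one may \emph{modify} the given local sequences $\{u_n^0\}\subset\dom{\dex_A|_{M_0}}$ and $\{u_n^\infty\}\subset\dom{\dex_A|_{M_\infty}}$ so that each is identically $1$ in a neighborhood of this interface, without spoiling the convergence $u_n\to1$ a.e.\ or the decay of the forms. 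The two modified sequences then agree on overlapping collars, so simple gluing yields a global sequence in $\dom{\dex_A}$ with the required properties --- no products, no truncation, no commutators. Your approach would also go through (Dirichlet forms are stable under multiplication by bounded $C^\infty$ functions, and all Markovian extensions here carry the same Dirichlet integrand), but it is more laborious.

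For the forward direction your sketch is fine in spirit, though the inequality $\dex_A|_{M_0}(u_n|_{M_0},v)\le \dex_A(u_n,\tilde v)$ is not literally correct for a bilinear pairing. The paper instead uses criterion~(iii) of Theorem~\ref{thm:fukushimarecur}, namely $\dex_A(u_n,u_n)\to0$, for which the restriction argument is immediate since the Dirichlet integrand is non-negative and $\dex_A|_{M_0}(u_n|_{M_0},u_n|_{M_0})\le \dex_A(u_n,u_n)$. The stochastic-completeness case is handled by the same gluing/restriction idea applied to Theorem~\ref{thm:fukushima}.
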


\begin{proof}
    Let $\{u_n\}_{n\in\N}\subset \dom{\dex_A}$ such that $u_n\rightarrow 1$ a.e. and $\dex_A(u_n,u_n)\rightarrow 0$.
    Since $\dom{\dex_A|_{M_0}}=\{u|_{M_0}\mid u\in\dom{\dex_A}\}$ and $\dom{\dex_A|_{M_\infty}}=\{u|_{M_\infty}\mid u\in\dom{\dex_A}\}$ follows that $\{u_n|_{M_0}\}_{n\in\N}\subset\dom{\dex_A|_{M_0}}$ and $\{u_n|_{M_\infty}\}_{n\in\N}\subset\dom{\dex_A|_{M_\infty}}$.
    Moreover, it is clear that $u_n|_{M_0},u_n|_{M_\infty}\rightarrow 1$ a.e. and $\dex_A|_{M_0}(u_n|_{M_0},u_n|_{M_0}),$ $ \dex_A|_{M_\infty}(u_n|_{M_\infty},u_n|_{M_\infty})\rightarrow 0$.
    By Theorem~\ref{thm:fukushimarecur}, this proves that if $\dex_A$ is recurrent it is recurrent also at $0$ and $\infty$.

    On the other hand, if $A|_{M_0}$ and $A|_{M_\infty}$ are recurrent, we can always choose the sequences $\{u_n\}_{n\in\N}\subset\dom{\dex_A|_{M_0}}$ and $\{v_n\}_{n\in\N}\subset\dom{\dex_A|_{M_\infty}}$ approximating $1$ such that they equal $1$ in a neighborhood $N$ of $\partial_{M_0}=\partial_{M_\infty}=(\{1\}\times\Tu)\cup(\{-1\}\times\Tu)$.
    In fact the constant function satisfies the Neumann boundary conditions we posed on $\partial M_0=\partial M_\infty$ for the operators associated with $\dex_A|_{M_0}$ and $\dex_A|_{M_\infty}$.
    Hence, by gluing $u_n$ and $v_n$ we get a sequence of functions in $\dom{\dex_A}$ approximating $1$.
    The same argument gives also the equivalence of the stochastic completeness, exploiting the characterization given in Theorem~\ref{thm:fukushima}.
\end{proof}

Before proceeding with the classification of the stochastic completeness and recurrence of $\delf$, $\delm$ and $\delb$, we need the following result.
For an operator acting on $\lduef$, the definition of stochastic completeness and recurrence at $0$ or at $\infty$ is given substituting $M_0$ and $M_\infty$ in Definition~\ref{def:stocorigin} with $(-1,1)$ and $(1,+\infty)$.

 \begin{prop} \label{prop:zero}
 	Let $A$ be a Markovian self-adjoint extension of $\delcomp$ and assume it decomposes as $A=\widehat A_0\oplus \tilde A$, where $\widehat A_0$ is a self-adjoint operator on $H_0$ and $\tilde A$ is a self-adjoint operator on $\bigoplus_{k\neq0}H_k$.
 	Then, $\widehat A_0$ is a Markovian self-adjoint extension of ${\widehat{\Delta}_0}$.
 	Moreover, $A$ is stochastically complete (resp. recurrent) at $0$ or at $\infty$ if and only if so is $\widehat A_0$.
 \end{prop}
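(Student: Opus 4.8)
The plan is to exploit the fact that the decomposition $A = \widehat A_0 \oplus \tilde A$ is compatible with the Fourier decomposition $\ldue = \bigoplus_k H_k$, and that the constant function $1$ lives entirely in $H_0$. First I would pass to the bilinear form side: by Theorem~\ref{thm:kato}, $A$ is Markovian iff $\dex_A$ is a Dirichlet form, and the orthogonal decomposition of $A$ gives a corresponding decomposition $\dom\dex_A = \dom{\dex_{\widehat A_0}} \oplus \dom{\dex_{\tilde A}}$ with $\dex_A(u,v) = \dex_{\widehat A_0}(\pi_0 u, \pi_0 v) + \dex_{\tilde A}(u - \pi_0 u, v - \pi_0 v)$, where $\pi_0$ is the projection onto $H_0$. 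To show $\widehat A_0$ is Markovian I would use Theorem~\ref{thm:normcontr}: given $u_0 \in \dom{\dex_{\widehat A_0}}$, view it as an element $u \in \dom{\dex_A}$ constant in $\theta$; then $u_\sharp = \min\{1,\max\{u,0\}\}$ is again constant in $\theta$, hence lies in $H_0$, and since $u_\sharp \in \dom\dex_A$ with $\dex_A(u_\sharp,u_\sharp) \le \dex_A(u,u)$ by Markovianity of $\dex_A$, the same inequality descends to $\dex_{\widehat A_0}$. This gives the first assertion.

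For the stochastic completeness/recurrence equivalence at $0$ (the argument at $\infty$ is identical, replacing $(-1,1)$ by $(1,+\infty)$ and $M_0$ by $M_\infty$), I would again work with the Neumann restrictions. By Definition~\ref{def:neumann} and the remark following it, $\dom{\dex_A|_{M_0}} = \{u|_{M_0} \mid u \in \dom\dex_A\}$, and this restriction inherits the Fourier decomposition: $\dom{\dex_A|_{M_0}} = \dom{\dex_{\widehat A_0}|_{(-1,1)}} \oplus (\text{contribution of } k\neq 0)$, with the form splitting accordingly. Since the constant $1$ on $M_0$ corresponds to the constant $1$ on $(-1,1)$ sitting in the $k=0$ component and having zero component in all $k \neq 0$ summands, I would invoke the characterizations in Theorems~\ref{thm:fukushima} and \ref{thm:fukushimarecur}. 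For recurrence: $\dex_A|_{M_0}$ is recurrent iff $1 \in \dom{(\dex_A|_{M_0})_e}$; an approximating sequence $\{u_n\} \subset \dom{\dex_A|_{M_0}}$ with $u_n \to 1$ a.e.\ and $\dex_A|_{M_0}(u_n,u_n) \to 0$ can be replaced by its $\theta$-averages $\bar u_n = \pi_0 u_n$, which still converge to $1$ a.e.\ (the average of $1$ is $1$) and satisfy $\dex_A|_{M_0}(\bar u_n, \bar u_n) \le \dex_A|_{M_0}(u_n, u_n) \to 0$ since the cross terms between $k=0$ and $k\neq 0$ vanish and dropping the $k\neq 0$ part only decreases the form. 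Thus the sequence may be taken inside the $k=0$ block, showing recurrence of $\dex_A|_{M_0}$ is equivalent to recurrence of $\dex_{\widehat A_0}|_{(-1,1)}$. The converse direction is immediate: a witnessing sequence for $\widehat A_0$ at $0$, viewed as constant in $\theta$, witnesses recurrence of $A$ at $0$.

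For stochastic completeness at $0$ the same reduction applies via Theorem~\ref{thm:fukushima}, with one extra point to check: the test functions range over $v \in \dom{\dex_A|_{M_0}} \cap L^1$, not just the $k=0$ block. But here the orthogonality of the Fourier components handles it cleanly — for any such $v$, $\dex_A|_{M_0}(\bar u_n, v) = \dex_{\widehat A_0}|_{(-1,1)}(\bar u_n, \pi_0 v)$ because $\bar u_n$ has no $k\neq 0$ component, and $\pi_0 v \in \dom{\dex_{\widehat A_0}|_{(-1,1)}} \cap L^1$ (projection onto $H_0$ is an $L^1$-contraction on this product space since $\Tu$ has finite measure). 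So the condition $\dex_A|_{M_0}(\bar u_n, v) \to 0$ for all admissible $v$ reduces exactly to the $k=0$ condition. The main obstacle I anticipate is not conceptual but bookkeeping: one must be careful that the Neumann restriction genuinely commutes with the Fourier decomposition — i.e.\ that imposing Neumann conditions on $\{\pm 1\}\times\Tu$ does not couple the $\theta$-modes — which holds because the boundary $\partial M_0$ is a union of circles $\{x=\pm1\}\times\Tu$ invariant under rotation, so the Neumann boundary form is rotationally invariant and block-diagonal. Once this is recorded, everything else is the averaging trick plus the cited characterizations.
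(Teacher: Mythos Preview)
Your proposal is correct and close in spirit to the paper's proof, but you route two of the three parts through different characterizations than the authors do.

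For the Markovianity of $\widehat A_0$, you use Theorem~\ref{thm:normcontr} (the unit contraction on the form side), lifting $u_0$ to a $\theta$-independent function and observing that $u_\sharp$ stays $\theta$-independent. The paper instead argues directly at the semigroup level: since $A$ splits, so does $e^{tA}=e^{t\widehat A_0}\oplus e^{t\tilde A}$, and a $\theta$-independent $0\le u\le 1$ is sent to the $\theta$-independent $e^{t\widehat A_0}u$, which inherits the bounds from the Markovianity of $e^{tA}$. Both are short; the semigroup argument avoids invoking the form decomposition at this stage.

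For stochastic completeness at $0$, you use Theorem~\ref{thm:fukushima} and the averaging trick. The paper bypasses this entirely: from $e^{tA}=e^{t\widehat A_0}\oplus e^{t\tilde A}$ and $\pi_0 1=1$, $\pi_k 1=0$ for $k\neq 0$, one gets $e^{tA|_{M_0}}1=e^{t\widehat A_0|_{(-1,1)}}1$ in one line, so the two conditions coincide by Definition~\ref{def:stoc}. This is more direct and sidesteps the bookkeeping with test functions $v$ that you flag.

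For recurrence both proofs use the Dirichlet-form characterization (Theorem~\ref{thm:fukushimarecur}) and the projection $\pi_0$; you use criterion (3) ($1\in\dom{\dex}_e$, with the inequality $\dex(\pi_0 u_n,\pi_0 u_n)\le\dex(u_n,u_n)$), while the paper uses criterion (2) (testing against $v$ in the extended domain and lifting $v$ to $\tilde v(x,\theta)=v(x)$). Your observation that the Neumann restriction commutes with the Fourier decomposition because $\partial M_0$ is rotationally invariant is exactly the point underpinning both arguments. One small wording issue: when you pass from $u_n\to 1$ a.e.\ in $(x,\theta)$ to $\pi_0 u_n\to 1$ a.e.\ in $x$, ``the average of $1$ is $1$'' is not the reason; you need dominated convergence in $\theta$ (using $0\le u_n\le 1$), which the paper also invokes.
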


 \begin{proof}
 	Let $\pi_k:\ldue\to H_k=\lduef$ be the projection on the $k$-th Fourier component. In particular, recall that $\pi_0 u = (2\pi)^{-1}\int_0^{2\pi}u(x,\theta)\,d\theta$.
	Let $u\in \dom {\widehat A_0} \subset L^2(\R,|x|^{-\alpha}dx)$ be such that $0\le u \le 1$. 
	Hence, posing $\tilde u(x,\theta)=u(x)$, due to the splitting of $A$ follows that $\tilde u\in\dom A$ and by the markovianity follows that $0\le A \tilde u\le 1$. 
	The first part of the statement is then proved by observing that, since $\pi_0 \tilde u=u$ and $\pi_k \tilde u=0$ for $k\neq 0$, we have $A \tilde u(x,\theta) = \widehat A_0 u(x)$ for any $(x,\theta)\in M$.

	We prove the second part of the statement only at $0$, since the arguments to treat the at $\infty$ case are analogous.
	First of all, we show that the stochastic completeness of $A$ and $\widehat A_0$ at $0$ are equivalent.
	If $1:M_0\to \R$ is the constant function, it holds that $\pi_0 1=1:(-1,1)\to \R$. 
	Moreover, due to the splitting of $A$, we have that $e^{tA}=e^{t\widehat A_0}\oplus e^{t\tilde A}$
	Hence, it follows that $e^{t A}1 = e^{t\widehat A_0} 1$.
	This, by Definition~\ref{def:stoc}, proves the claim.

	To prove the equivalence of the recurrences at $0$, we start by observing that $\dom {\dex_A}=\dom {\dex_{\widehat A_0}}\oplus \dom{\dex_{\tilde A}}$ and that 
	\begin{equation}
		\label{eq:dexfourier}
		\dex_A(u,v)=\dex_{\widehat A_0}(\pi_0 u,\pi_0 v) + \dex_{\tilde A}(\oplus_{k\neq 0} \pi_k u,\oplus_{k\neq 0} \pi_k v),\qquad \text{ for any }u,v\in\dom{\dex_A}
	\end{equation}
	In particular, since $\pi_0 1=1$ this implies that $\dex_A|_{M_0}(1,1)=\dex_{\widehat A_0}|_{(-1,1)}(1,1)$.
	By Theorem~\ref{thm:fukushimarecur}, this proves that if $\widehat A_0$ is recurrent at $0$, so is $A$.
	Assume now that $A|_{M_0}$ is recurrent.
	By Theorem~\ref{thm:fukushimarecur} there exists $\{u_n\}_{n\in\N}\subset \dom {\dex_A|_{M_0}}$ such that $0\le u_n\le 1$ a.e., $u_n\longrightarrow 1$ a.e. and $\dex_A|_{M_0}(u_n,v)\rightarrow 0$ for any $v$ in the extended domain $\dom{\dex_A|_{M_0}}_e$. 
	By dominated convergence, it follows that $\pi_0 u_n=(2\pi)^{-1}\int_0^{2\pi}u_n(\cdot,\theta)\,d\theta \rightarrow 1$ for a.e. $x\in(-1,1)$.
	For any $v\in \dom{\dex_{\widehat A_0}|_{(-1,1)}}_e$, let $\tilde v(x,\theta)=v(x)$.
	It is easy to see that $\tilde v\in \dom{\dex_A|_{M_0}}_e$
	Then, by applying \eqref{eq:dexfourier} we get
	\[
		\dex_{\widehat A_0}|_{(-1,1)}({\pi_0 u_n,v}) = \dex_A|_{M_0} (u_n,\tilde v)\longrightarrow 0,\qquad \text{ for any } v\in \dom{\dex_{\widehat A_0}|_{(-1,1)}}_e.
	\]
	Since $0\le \pi_0 u_n \le 1$, this proves that $\widehat A_0|_{(-1,1)}$ is recurrent
 \end{proof}

The following proposition answers the problem of stochastic completeness or recurrence of the Friedrichs extension. 

\begin{prop}\label{prop:fried}
 	Let $\Delta_F$ be the Friedrichs extension of $\delcomp$. 
 	Then, the following holds 
 	\begin{center}
		\begin{tabular}{c|c|c|}
			 & at $0$ & at $\infty$ \\ \hline
			$\alpha < -1$ & recurrent& stochastically complete  \\ \hline
			$\alpha = -1$ & recurrent & recurrent \\ \hline
			$\alpha > -1$ &  explosive & recurrent\\ \hline		
		\end{tabular}
	\end{center}
	In particular, $\delf$ is stochastically complete for $\alpha<-1$, recurrent for $\alpha=-1$ and explosive for $\alpha>-1$.
\end{prop}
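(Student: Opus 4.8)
The plan is to reduce the whole statement to the radial component and then feed the already-established Lemma~\ref{lem:sobzero} into the Fukushima criteria, Theorems~\ref{thm:fukushima} and~\ref{thm:fukushimarecur}. Since the Friedrichs form respects rotations, $\Delta_F=\bigoplus_{k\in\Z}(\widehat\Delta_k)_F$, so $\Delta_F$ has the product shape $\widehat A_0\oplus\tilde A$ required by Proposition~\ref{prop:zero}, with $\widehat A_0=(\widehat\Delta_0)_F$; hence $\Delta_F$ is stochastically complete (resp.\ recurrent) at $0$ or at $\infty$ if and only if $(\widehat\Delta_0)_F$ is, and the constant $1$ lives in the $k=0$ sector. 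By the disjoint-dynamics structure of $(\widehat\Delta_0)_F$ and the reflection $x\mapsto-x$, the behaviour at $0$ (resp.\ at $\infty$) of $(\widehat\Delta_0)_F$ is in turn that of its restriction to $(0,1)$ (resp.\ to $(1,+\infty)$) with a Neumann condition at the outer endpoint.

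Recurrence at $0$ for $\alpha\le-1$ and at $\infty$ for $\alpha\ge-1$ is then immediate: by Lemma~\ref{lem:sobzero} the constant $1$ lies in $\dom{\dexf|_{M_0}}\subset\dom{\dexf|_{M_0}}_e$ when $\alpha\le-1$ and in $\dom{\dexf|_{M_\infty}}_e$ when $\alpha\ge-1$, which by Theorem~\ref{thm:fukushimarecur} is exactly recurrence. In particular $\Delta_F$ is stochastically complete at $0$ for $\alpha\le-1$.

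For explosiveness at $0$ when $\alpha\in(-1,1)$ I would use that $\int_{M_0}\misura=2\pi\int_{-1}^{1}|x|^{-\alpha}\,dx<+\infty$: by Remark~\ref{rmk:equiv}, stochastic completeness at $0$ is then equivalent to recurrence at $0$, while Lemma~\ref{lem:sobzero} gives $1\notin\dom{\dexf|_{M_0}}_e$, hence $\dexf|_{M_0}$ is not recurrent and therefore not stochastically complete. When $\alpha\ge1$, $M_0$ has infinite measure and this shortcut fails, so I would argue directly on $(\widehat\Delta_0)_F|_{(0,1)}$. Since $-(\widehat\Delta_0)_F\ge0$, the resolvent $G_1:=\big(1-(\widehat\Delta_0)_F|_{(0,1)}\big)^{-1}$ extends to $L^\infty$ and $w:=G_11$ satisfies $0\le w\le1$, the Neumann condition at $1$, and, distributionally on $(0,1)$, $w''-\tfrac{\alpha}{x}w'=w-1$; so $\phi:=1-w$ solves the homogeneous equation $\phi''-\tfrac{\alpha}{x}\phi'-\phi=0$ with $\phi'(1)=0$. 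The solutions of this equation vanishing at $0^+$ form a one-dimensional space (those behaving like $x^{1+\alpha}$), and each of them lies in $L^2((0,1),|x|^{-\alpha}dx)$; such a solution satisfying also $\phi'(1)=0$ would belong to $\dom{(\widehat\Delta_0)_F|_{(0,1)}}$ and be a nonzero element of $\ker\big(1-(\widehat\Delta_0)_F|_{(0,1)}\big)$, which is impossible as $1-(\widehat\Delta_0)_F|_{(0,1)}\ge1$. Hence the Neumann solution at $1$ does not vanish at $0^+$; approximating $(0,1)$ from inside by $(R,1)$ with a Dirichlet condition at $R$ (so that $G^{(R)}_1 1\uparrow w$) identifies $\phi$ with a nonzero multiple of it, whence $w\not\equiv1$ and the semigroup is not conservative. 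This proves $\Delta_F$ explosive at $0$. (The same argument works for every $\alpha\in(-1,1)$ as well, with the Friedrichs condition $u(0^+)=0$ in place of $L^2$-integrability, should one prefer a uniform treatment.)

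Finally, stochastic completeness at $\infty$ for $\alpha<-1$---where recurrence fails by Lemma~\ref{lem:sobzero}---follows from Theorem~\ref{thm:fukushima} with the cutoff sequence $u_n(x,\theta)=\eta(|x|/n)$, where $\eta\in C^\infty([0,+\infty))$, $0\le\eta\le1$, $\eta\equiv1$ on $[0,1]$, $\eta\equiv0$ on $[2,+\infty)$ and $\eta'\equiv0$ near $1$ and near $2$. One has $u_n\in\dom{\dexf|_{M_\infty}}$, $0\le u_n\le1$, and $u_n\to1$ a.e.; integrating by parts in $x$ (the boundary terms vanish since $\eta'=0$ at $1$ and $2$) and using $|x|\ge n$ on $\supp\partial_x u_n$ yields $\big|\dexf|_{M_\infty}(u_n,v)\big|\le C\,n^{-2}\int_{M_\infty}|v|\,\misura$, which tends to $0$ for every $v\in\dom{\dexf|_{M_\infty}}\cap L^1(M_\infty,\misura)$. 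Assembling the cases of the table with Proposition~\ref{prop:unione} then gives the final ``in particular'' sentence. The only genuinely delicate step, I expect, is the explosiveness at $0$ for $\alpha\ge1$: there the singularity lies at the infinite-measure end of $M_0$, the elementary equivalence of stochastic completeness and recurrence is no longer available, and one is pushed into the resolvent/ODE analysis above (equivalently, a Sturm--Liouville computation showing that $0^+$ is an exit endpoint of $\widehat\Delta_0$).
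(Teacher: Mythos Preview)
Your proof is correct and shares with the paper the reduction to the $k=0$ component via Proposition~\ref{prop:zero} together with the recurrence input from Lemma~\ref{lem:sobzero} and Theorem~\ref{thm:fukushimarecur}. The arguments diverge on the two remaining items. For explosiveness at $0$ the paper gives a single, uniform proof valid for every $\alpha>-1$: assuming stochastic completeness of $\fdexf|_{(0,1)}$, it takes $u_n\in C_c^\infty((0,1])$ (possible because $\fdexf|_{(0,1)}$ is regular on $(0,1]$, so in particular $u_n(0)=0$) and tests against the explicit function $v_R(x)=x^{1+\alpha}/R^{1+\alpha}$ on $(0,R)$, $v_R\equiv1$ on $[R,1]$, which lies in $\dom{\fdexf|_{(0,1)}}\cap L^1$; a one-line computation gives $\fdexf|_{(0,1)}(u_n,v_R)=\tfrac{1+\alpha}{R^{1+\alpha}}u_n(R)\to0$, hence $u_n(R)\to0$ for every $R$, contradicting $u_n\to1$ a.e. Your split treatment is also valid: for $\alpha\in(-1,1)$ the finite-measure shortcut through Remark~\ref{rmk:equiv} is in fact slicker than what the paper does, whereas for $\alpha\ge1$ your resolvent/ODE (exit-boundary) argument works but is heavier than the paper's test-function computation---so the case $\alpha\ge1$ is not the delicate one you anticipated. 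For stochastic completeness at $\infty$ when $\alpha<-1$, the paper simply smooths the metric on $(0,+\infty)$ and invokes Grigor'yan's criterion for model manifolds; your explicit cutoff $u_n=\eta(|x|/n)$ with the integration-by-parts bound $|\dexf|_{M_\infty}(u_n,v)|\le Cn^{-2}\|v\|_{L^1}$ is a self-contained substitute that avoids the external reference.
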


\begin{proof}
	The part regarding the recurrence is a consequence of Lemma~\ref{lem:sobzero} and Theorem~\ref{thm:fukushimarecur}, while the last statement is a consequence of Proposition~\ref{prop:unione}.
	Thus, to complete the proof it suffices to prove that $\delf$ is stochastically complete at $+\infty$ if $\alpha<-1$ and not stochastically complete at $0$ if $\alpha>-1$.

	By Proposition~\ref{prop:zero} and the fact that $\delf=\oplus_{k\in\Z}(\delfk)_F$, we actually need to prove this fact only for $(\delfz)_F$.
	Moreover, since the Friederichs extension decouples the dynamics on the two sides of the singularity, we can work only on $(0,+\infty)$ instead that on $\R\setminus\{0\}$.
 	As in Lemma~\ref{lem:sobzero}, we let $\widehat\dex_0$ to be the Dirichlet form associated to the Friederichs extension of $\delfz$.
    
    We start by proving the explosion for $\alpha>-1$ on $(0,1)$.
    Let us proceed by contradiction and assume that $(\delfz)_F$ is stochastically complete on $(0,1)$.
    By Theorem~\ref{thm:fukushima}, there exists $u_n\in \dom{\fdexf|_{(0,1)}}$, $0\le u_n\le 1$, $u_n\longrightarrow 1$ a.e. and such that $\fdexf|_{(0,1)}(u_n,v)\longrightarrow 0$ for any $v\in\dom {\fdexf|_{(0,1)}}\cap L^1((0,1),x^{-\alpha}dx)$.
    Since $\fdexf|_{(0,1)}$ is regular on $(0,1]$, we can choose the sequence such that $u_n\in\cc{(0,1]}$.
    In particular $u_n(0)=\lim_{x\downarrow 0} u_n(x)=0$ for any $n$.
    Let us define, for any $0<R\le 1$,
    \[
    	v_R(x)=\lim_{r\downarrow 0} \big(1-u_{r,R}(x)\big)=
    	\begin{cases}
    		{x^{1+\alpha}}/{R^{1+\alpha}} & \qquad \text{if }0\le x<R,\\
    		1 & \qquad \text{if } 0\le x\ge R,
    	\end{cases}
    \]
    where $u_{r,R}$ is defined in \eqref{eq:potential}.
    Observe that, by the probabilistic interpretation of $u_{r,R}$ given in Remark~\ref{rmk:equilibriumpotential}, follows that $v_R(x)$ is the probability that the Markov process associated with $(\delfz)_F$ and starting from $x$ exits the interval $(0,R)$ before being absorbed by the singularity at $0$.
    A simple computation shows that $v_R\in \dom{\fdexf|_{(0,1)}}\cap L^1((0,1),x^{-\alpha}dx)$.
    Thus, by definition of $\{u_n\}_{n\in\N}$ and a direct computation we get
    \[
    	0=\lim_{n\rightarrow+\infty} \fdexf|_{(0,1)}(u_n,v_{R})= 
    	\frac{1+\alpha}{R^{1+\alpha}} \lim_{n\rightarrow+\infty}\int_0^{R} \partial_x u_n\,dx = 
    	\frac{1+\alpha}{R^{1+\alpha}}  \lim_{n\rightarrow+\infty} u_n(R).
    \]
    Hence, $u_n(R)\longrightarrow 0$ for any $0<R<1$, contradicting the fact that $u_n\longrightarrow 1$ a.e..

    To complete the proof, we need to show that if $\alpha<-1$, $(\delfz)_F$ is stochastically complete on $(1,+\infty)$.
    Since on $(1,+\infty)$ the metric is regular, we can complete it to a $C^{\infty}$ Riemannian metric on the whole interval $(0,+\infty)$.
    Then, the result follows by applying the characterization of stochastic completeness on model manifolds contained in \cite{grigoryan} and Theorem~\ref{prop:unione}.
\end{proof}

We are now in a position to prove Theorem~\ref{thm:markov}.

 \begin{proof}[Proof of Theorem~\ref{thm:markov}]
  	By Propositions~\ref{prop:onlyfried} and \ref{prop:fried}, we are left only to prove statement {$(iii)$-$(a)$} and the second part of $(iii)$-$(b)$, i.e., the stochastic completeness of $\delm$ and $\delb$ at $0$ when $\alpha\in(-1,1)$.

 	Statement $(iii)$-$(a)$ follows from \cite[Theorem~1.6.4]{fukushima}, since for $\alpha\in(-1,1)$ the Friederichs extension (which is the minimal extension of $\delcomp$) is recurrent at $\infty$. 
 	To complete the proof it suffices to observe that, for these values of $\alpha$, it holds that $1\in H^1(M_0,\misura)=\dom{\dexm|_{M_0}}$ and clearly $\dexm|_{M_0}(1,1)=0$.
 	By Theorem~\ref{thm:fukushimarecur}, this implies the recurrence of $\dexm$ at $0$.
 	The recurrence of $\dexb$ at $0$ follows analogously, observing that $1$ is also continuous on $\sing$ and hence it belongs to $\dom{\dexb|_{M_0}}$
 \end{proof}

\appendix
\section{Geometric interpretation}

In this appendix we prove Lemmata~\ref{lem:mcil} and \ref{lem:mcono}, and justify the geometric interpretation of Figure~\ref{fig:int-geo}.

\subsection{Topology of $M_\alpha$}
\label{sec:geom}

\begin{proof}[Proof of Lemma~\ref{lem:mcil}]
	By \eqref{eq:distcontr}, it is clear that $d:\mcil\times \mcil\to [0,+\infty)$ is symmetric, satisfies the triangular inequality and $d(q,q)=0$ for any $q\in \mcil$.
	Observe that the topology on $\mcil$ is induced by the distance $\distcil((x_1,\theta_1),(x_2,\theta_2))=|x_1-x_2|+|\theta_1-\theta_2|$.
	Here and henceforth, for any $\theta_1,\,\theta_2\in\Tu$ when writing $\theta_1-\theta_2$ we mean the non-negative number $\theta_1-\theta_2\mod 2\pi$.
	In order to complete the proof it suffices to show that for any $\{q_n\}_{n\in\N}\subset \mcil$ and $\bar q\in \mcil$ it holds
	\begin{equation}
		\label{eq:equivcil}
		d(q_n,\bar q)\longrightarrow 0 \text{ if and only if }\distcil(q_n,\bar q)\longrightarrow 0.
	\end{equation}
	In fact, this clearly implies that if $d(q_1,q_2)=0$ then $q_1=q_2$, proving that $d$ is a distance, and moreover proves that $d$ and $\distcil$ induce the same topology.	
	
	Assume that $d(q_n,\bar q)\rightarrow 0$ for some $\{q_n\}_{n\in\N}\subset\mcil$ and $\bar q=(\bar x,\bar\theta)\in \mcil$.
	In this case, for any $n\in\N$ there exists a control $u_n:[0,1]\to \R^2$ such that $\|u_n\|_{L^1([0,1],\R^2)}\rightarrow 0$ and that the associated trajectory $\gamma_n(\cdot)=(x_n(\cdot),\theta_n(\cdot))$ satisfies $\gamma_n(0)=q_n$ and $\gamma_n(1)=\bar q$.
	This implies that, for any $t\in[0,1]$
	\[
		|x_n(t)-\bar x|\le \int_0^t |u_1(t)|\,dt \le \|u_n\|_{L^1([0,1],\R^2)}\longrightarrow 0.
	\]
	Hence, $x_n(t)\longrightarrow \bar x$. 
	In particular, this implies that $|x_n(t)|\le \|u_n\|_{L^1([0,1],\R^2)} +|\bar x|$ for any $t\in[0,1]$, and hence
	\begin{multline*}
		|\theta_n(0)-\bar\theta|\le \int_0^1 |u_2(t)||x_n(t)|^\alpha\,dt\le\big(\|u_n\|_{L^1([0,1],\R^2)} +|\bar x|\big)^\alpha\int_0^1 |u_2(t)|\,dt \\
		\le \|u_n\|_{L^1([0,1],\R^2)}(\|u_n\|_{L^1([0,1],\R^2)} +|\bar x|)^\alpha \longrightarrow 0.
	\end{multline*}
	Here, when taking the limit, we exploited the fact that $\alpha\ge0$.
	Thus also $\theta_n(0)\longrightarrow\bar\theta$, and hence $q_n=(x_n(0),\theta_n(0))\longrightarrow (\bar x,\bar\theta)=\bar q$ w.r.t. $\distcil$.
	
	In order to complete the proof of \eqref{eq:equivcil}, we now assume that for some $q_n=(x_n,\theta_n)$ and $\bar q=(\bar x,\bar\theta)$ it holds $\distcil(q_n,\bar q)\longrightarrow 0$ and claim that $d(q_n,\bar q)\longrightarrow 0$.
	We start by considering the case $\bar q\notin \sing$, and w.l.o.g. we assume $\bar q\in M^+$.
	Since $M^+$ is open with respect to $\distcil$, we may assume that $q_n\in M^+$. 
	Consider now the controls
	\[
		u_n(t)=
		\begin{cases}
			2\,(\bar x - x_n) \, (1,0)&\qquad\text{if } 0\le t\le\frac 1 2,\\
		    2\,(\bar\theta-\theta_n)|\bar x|^{-\alpha}\,(0,1) &\qquad\text{if } \frac 1 2<t\le 1,\\
		\end{cases}
	\]
	A simple computation shows that each $u_n$ steers the system from $q_n$ to $\bar q$. 
	The claim then follows from
	\[
		d(q_n,\bar q)\le \|u_n\|_{L^1([0,1],\R^2)}\le |\bar x-x_n|+|\bar\theta-\theta_n||\bar x|^{-\alpha}\le (1+|\bar x|^{-\alpha} )\,\distcil(q_n,\bar q) \longrightarrow 0.
	\]
	Let now $\bar q\in\sing$ and observe that w.l.o.g. we can assume $q_n\notin \sing$ for any $n\in\N$.
	In fact, if this is not the case it suffices to consider $\tilde q_n=(x_n+1/n,\theta_n)\notin\sing$, observe that $d(q_n,\tilde q_n)\rightarrow 0$ and apply the triangular inequality.
	Then, we consider the following controls, steering the system from $q_n$ to $\bar q$,
	\[
		v_n(t)=
		\begin{cases}
			3\,\big((\bar\theta-\theta_n)^{\nicefrac{1}{2\alpha}}-x_n\big) \, (1,0)&\qquad\text{if } 0\le t\le\frac 1 3,\\
		    3\,(\bar\theta-\theta_n)^{\nicefrac{1}{2}}\,(0,1) &\qquad\text{if } \frac 1 3<t\le\frac 2 3,\\
		    -3\,(\bar\theta-\theta_n)^{\nicefrac{1}{2\alpha}}\,(1,0)&\qquad\text{if } \frac 2 3< t\le1.\\
		\end{cases}
	\]
	Since $\bar x=0$ and $\alpha\ge 0$, we have
	\[
		d(q_n,\bar q)\le \|v_n\|_{L^1([0,1],\R^2)}\le |(\theta_n-\bar\theta)^{\nicefrac{1}{2\alpha}}-x_n|+|\bar\theta-\theta_n|^{\nicefrac{1}{2}}+|\theta_n-\bar\theta|^{\nicefrac{1}{2\alpha}} \longrightarrow 0.
	\]
	This proves \eqref{eq:equivcil} and hence the lemma.
\end{proof}

\begin{proof}[Proof of Lemma~\ref{lem:mcono}]
	By \eqref{eq:distmetr}, it is clear that $d:\mcono\times \mcono\to [0,+\infty)$ is symmetric, satisfies the triangular inequality and $d(q,q)=0$ for any $q\in \mcono$.

	Observe that the topology on $\mcono$ is induced by the following metric
    \[
        \distcono((x_1,\theta_1),(x_2,\theta_2))=
        \begin{cases}
            |x_1-x_2|+|\theta_1-\theta_2| &\qquad\text{if } x_1x_2> 0,\\
            |x_1-x_2| &\qquad\text{if } x_1=0 \text{ or } x_2=0,\\
            |x_1-x_2|+|\theta_1|+|\theta_2| &\qquad\text{if } x_1x_2< 0.\\
        \end{cases}
    \]
	By symmetry, to show the equivalence of the topologies induced by $d$ and by $\distcono$, it is enough to show that the two distances are equivalent on $[0,+\infty)\times\Tu$.
	Moreover, since by definition of $\metr$ it is clear that $d(q_1,q_2)=0$ for any $q_1,q_2\in\sing$ and that $d$ is equivalent to the Euclidean metric on $(0,+\infty)\times\Tu$, we only have to show that for any $\{q_n\}\subset (0,+\infty)\times \Tu$, $q_n=(x_n,\theta_n)$, and $\bar q=(0,\bar\theta) \in\sing$, it holds that 
	\begin{equation}
		\label{eq:equivcono}
		d(q_n,\bar q)\longrightarrow 0 \text{ if and only if }\distcono(q_n,\bar q)\longrightarrow 0.
	\end{equation}

	We start by assuming that $d(q_n,\bar q)\longrightarrow 0$. 
	Then, there exists $\gamma_n:[0,1]\to M$ such that $\gamma_n(0)=q_n$ and $\gamma_n(1)=\bar q$ and $\int_0^1 \sqrt{\metr(\gamma_n(t),\gamma_n(t))}\,dt\longrightarrow 0$.
	This implies that
	\[
		|x_n|\le \int_0^1 \sqrt{\metr(\gamma_n(t),\gamma_n(t))}\,dt \longrightarrow 0,
	\]
	and thus that $x_n\longrightarrow 0$.
	This suffices to prove that $\distcono(q_n,\bar q)\longrightarrow 0$.

	On the other hand, if $\distcono(q_n,\bar q)\longrightarrow 0$, it suffices to consider the curves
	\[
		\gamma_n(t) = 
		\begin{cases}
			\big((1-2t)x_n, \theta_n\big)&\qquad\text{ if } 0\le t<\frac 1 2, \\
			\big(0,\theta_n+(2t-1)(\bar\theta-\theta_n)\big)&\qquad\text{ if } \frac 1 2\le t \le 1.
		\end{cases}
	\]
	Clearly $\gamma_n$ is Lipschitz and $\gamma_n(0)=q_n$ and $\gamma_n(1)=\bar q$.
	Finally, since $\metr|_{\sing}=0$, the proof is completed by
	\[
		d(q_n,\bar q)\le \int_0^1 \sqrt{\metr_{\gamma_n(t)}(\dot\gamma_n(t),\dot\gamma_n(t))}\,dt = \int_0^{\frac 1 2}\sqrt{\metr_{\gamma_n(t)}((-2x_n,0),(-2x_n,0))}\,dt = x_n \longrightarrow 0.
	\]
\end{proof}

\subsection{Surfaces of revolution}
\label{sec:surfaces}

Given two manifolds $M$ and $N$, endowed with two (possibly semi-definite) metrics $\metr^M$ and $\metr^N$, we say that $M$ is \em $C^1$-isometric \em to $N$ if and only if there exists a $C^1$-diffeomorphism $\Phi:M\to N$ such that $\Phi^*\metr_N=\metr_M$. 
Here $\Phi^*$ is the pullback of $\Phi$.
Recall that, in matrix notation, for any $q\in M$ it holds
\begin{equation}\label{eq:push}
	(\Phi^*\metr^N)_q(\xi,\eta)= \left(J_{\Phi}\right)^T \,\metr^M_{\Phi(q)} J_{\Phi} (\xi,\eta).
\end{equation}
Here $J_{\Phi}$ is the Jacobian matrix of $\Phi$.

We have the following.
\begin{prop}
	\label{prop:surfacerevolution}
	If $\alpha< -1$ the manifold $M_\alpha$ is $C^1$-isometric to a surface of revolution $\mathcal S=\{ (t, r(t)\cos\vartheta, r(t)\sin\vartheta ) \mid t\in\R,\, \vartheta\in\Tu\}\subset\R^3$ with profile $r(t)=|t|^{-\alpha}+\grando{t^{-2(\alpha+1)}}$ as $|t|\downarrow0$ (see figure~\ref{fig:revolutionsurface}), endowed with the metric induced by the embedding in $\R^3$.

	If $\alpha=-1$, $M_\alpha$ is globally $C^1$-isometric to the surface of revolution with profile $r(t)=t$, endowed with the metric induced by the embedding in $\R^3$.
\end{prop}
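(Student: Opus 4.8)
The plan is to realise $\clm$ explicitly as (a piece of) a surface of revolution by writing down its generating curve and then checking the pull-back formula \eqref{eq:push}. Since $\metr=dx^2+|x|^{-2\alpha}\,d\theta^2$ is already in the normal form $dx^2+f(x)^2\,d\theta^2$ with $f(x)=|x|^{-\alpha}$ — so that $x$ is arclength along the meridians $\{\theta=\text{const}\}$ — the natural candidate for a $C^1$-isometry is
\[
	\Phi(x,\theta)=\bigl(\zeta(x),\,f(x)\cos\theta,\,f(x)\sin\theta\bigr)\in\R^3,\qquad
	\zeta(x):=\int_0^x\sqrt{1-f'(s)^2}\;ds .
\]
For $\alpha\le-1$ one has $f'(x)=-\alpha\,x^{-\alpha-1}$ for $x>0$, hence $|f'(x)|=|\alpha|\,|x|^{-\alpha-1}$; since $-\alpha-1\ge0$ this is $\le1$ on a neighbourhood of $\sing$ (and $\equiv1$ when $\alpha=-1$), so $\zeta$ is well defined and $C^1$ there, with $\zeta(0)=0$. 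Computing the Jacobian of $\Phi$ and inserting it in \eqref{eq:push} gives $\langle\partial_x\Phi,\partial_x\Phi\rangle=\zeta'(x)^2+f'(x)^2=1$, $\langle\partial_x\Phi,\partial_\theta\Phi\rangle=0$ and $\langle\partial_\theta\Phi,\partial_\theta\Phi\rangle=f(x)^2=|x|^{-2\alpha}$, that is $\Phi^*\metr^{\R^3}=\metr$.

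Then I would separate the two regimes. If $\alpha=-1$, then $f'\equiv\pm1$, so $\zeta\equiv0$ and the image of $\Phi$ is the flat disc $\{(0,\rho\cos\theta,\rho\sin\theta)\}$; on each side of $\sing$ the map $(x,\theta)\mapsto(|x|\cos\theta,|x|\sin\theta)$ is a $C^1$-isometry onto it, which is exactly the surface of revolution whose profile, as a function of the meridian parameter, is $r(t)=t$; this gives the second assertion. If $\alpha<-1$, then $f(0)=f'(0)=0$, so $\Phi$ collapses $\{x=0\}$ to the origin, compatibly with the identification defining $\mcono$; here, as $\zeta'(0)=1\neq0$, the generating curve $x\mapsto(\zeta(x),f(x))$ can be reparametrised by its first coordinate $t:=\zeta(x)$, so that the image of $\Phi$ near $\sing$ is $\{(t,r(t)\cos\vartheta,r(t)\sin\vartheta)\}$ with $r=f\circ\zeta^{-1}$. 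Expanding $\zeta(x)=x-\tfrac{\alpha^2}{2(-2\alpha-1)}\,|x|^{-2\alpha-1}+\dots$ as $x\to0$, inverting, and substituting into $f(x)=|x|^{-\alpha}$ yields the claimed $r(t)=|t|^{-\alpha}+\grando{t^{-2(\alpha+1)}}$ as $|t|\downarrow0$.

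It remains to upgrade ``$\Phi$ is a $C^1$ map with $\Phi^*\metr^{\R^3}=\metr$'' to ``$\Phi$ is a $C^1$-diffeomorphism onto a surface of revolution'' in the sense of the definition preceding the statement, and to carry the construction out on the whole neighbourhood of $\sing$ on which $|f'(x)|\le1$. Away from $\sing$ this is immediate, $\zeta$ and $f$ being strictly monotone there. The delicate point is at $\sing$: one must check that $\Phi$ descends to a $C^1$ map on $\mcono$ near the apex and is a local $C^1$-diffeomorphism onto a neighbourhood of the tip of the surface. I expect this local analysis at the apex — together with the bookkeeping for the asymptotic expansion of $r$ — to be the main, though not deep, obstacle; observe in particular that $f(x)=|x|^{-\alpha}$ is only $C^1$, not $C^2$, at $x=0$ when $-\alpha\notin\N$, so the surface meets its axis just $C^1$-smoothly, which is exactly why the conclusion is stated with $C^1$ and not $C^\infty$ regularity.
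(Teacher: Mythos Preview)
For $\alpha<-1$ your construction is correct and is essentially the dual of the paper's. The paper starts from an unspecified surface of revolution, passes to the arclength coordinate $x(t)=\int_0^t\sqrt{1+r'(s)^2}\,ds$ along the meridians (the inverse of your $\zeta$), and then determines the profile by solving the integral equation $r(t)=x(t)^{-\alpha}$ via the associated ODE $r'=\big(\alpha^{-2}r^{-2(1+1/\alpha)}-1\big)^{-1/2}$; you instead write down the embedding $\Phi$ explicitly and read off $r=f\circ\zeta^{-1}$ by Taylor expansion. The two routes are inverse to one another; yours is a bit more direct since it bypasses the existence discussion for the ODE, while the paper's makes the regularity of $r$ at $t=0$ somewhat more transparent.

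For $\alpha=-1$, however, your construction degenerates and does not prove what is stated. Since $f'\equiv\pm1$ you get $\zeta\equiv0$, so the image of your $\Phi$ is the flat disc $\{0\}\times\R^2$, which is \emph{not} the surface $\mathcal S=\{(t,t\cos\vartheta,t\sin\vartheta)\}$ of the statement: that $\mathcal S$ is a genuine cone of half-opening angle $\pi/4$, and $t$ there is the axial coordinate, not the meridian arclength. Your sentence identifying the disc with ``profile $r(t)=t$'' silently reinterprets $t$, which is not the paper's convention. The paper treats $\alpha=-1$ separately, computing directly the pullback of the cone metric $2\,dt^2+t^2\,d\vartheta^2$ under a linear rescaling $\Psi$; you should do the same rather than try to force the $\Phi$-construction, which breaks down precisely because $|f'|\equiv1$ leaves no room for a nontrivial axial component.
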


\begin{figure}
	\hfill \includegraphics[width=5cm]{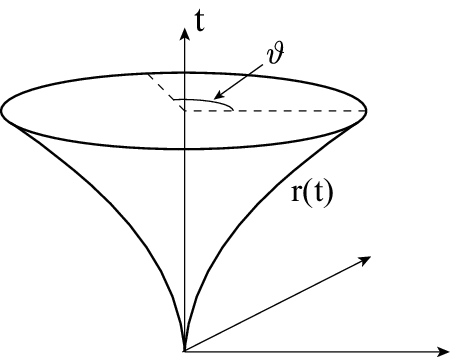}\hspace*{\fill}
	\caption{The surface of revolution of Proposition~\ref{prop:surfacerevolution} with $\alpha=-2$, i.e. $r(t)=t^2$.}
	\label{fig:revolutionsurface}
\end{figure}

\begin{proof}
For any $r\in C^1(\R)$, consider the surface of revolution $\mathcal S=\{ (t, r(t)\cos\vartheta, r(t)\sin\vartheta ) \mid t>0,\, \vartheta\in\Tu\}\subset\R^3$.
By standard formulae of calculus, we can calculate the corresponding (continuous) semi-definite Riemannian metric on $\mathcal S$ in coordinates $(t,\vartheta)\in\R\times\Tu$ to be
\begin{eqnarray*}
 {g}_{\mathcal S}(t,\vartheta) =\left(\ba{cc} 1+r'(t)^2&0\\0&r^2(t)\ea\right).
\end{eqnarray*}

Let now $\alpha<-1$ and consider the $C^1$ diffeomorphism $\Phi:(x,\theta)\in\R\times\Tu \mapsto (t(x),\vartheta(\theta))\in \mathcal S$ defined as the inverse of 
\begin{equation}\label{eq:ccoor}
	\Phi^{-1}(t,\vartheta)=
	\left(
	\begin{array}{c}
		x(t)\\
		\theta(\vartheta)
	\end{array}
	\right)
	=
	\left(
	\begin{array}{c}
		\int_0^t \sqrt{1+r'(s)^2}ds\\
		\vartheta
	\end{array}
	\right).
\end{equation}
Observe that $\Phi$ is well defined due to the fact that $r'$ is bounded near $0$.
Since $\partial_t (\Phi^{-1})=\partial_t x(t)=\sqrt{1+r'(t)^2}$, by \eqref{eq:push} the metric is transformed in
\begin{equation*}\label{eq:tildeg}
 	\Phi^* {g}_{\mathcal S} (x,\theta)= \left(J_{\Phi}^{-1}\right)^T {g}_{\mathcal S}(\Phi(x,\theta)) J_{\Phi}^{-1} = \left(\ba{cc} 1&0\\0&r\big(\Phi(x,\theta)\big)^{2}\ea\right).
\end{equation*}

We now claim that there exists a function $r\in C^{1}(\R)$ such that $r(t(x))=|x|^{-\alpha}$ near $\{x=0\}$.
Moreover, this function has expression
\begin{equation}
  \label{eq:rtexpr}
  	r(t)=
	\begin{cases}
	    t^{-\alpha}+\grando{t^{-2(\alpha+1)}}, &\quad \text{ if } t\ge0,\\
	    -(-t)^{-\alpha}+\grando{t^{-2(\alpha+1)}} &\quad \text{ if } t<0.
	\end{cases}
\end{equation}
Notice that, this function generates the same surface of revolution as $r(t)=|t|^{-\alpha}+\grando{t^{-2(\alpha+1)}}$, but is of class $C^{1}$ in $0$ while the latter is not.

The fact that $r(t(x))=|x|^{-\alpha}$ is equivalent to $r(t)=|x(t)|^{-\alpha}$, i.e.,
\begin{equation}
  \label{eq:integraleq}
  r(t) = \left( \int_0^t \sqrt{1+r'(s)^2}\,ds \right)^{-\alpha}.
\end{equation}
This integral equation has a unique solution.
Indeed, after algebraic manipulation and a differentiation, it is equivalent to the Cauchy problem
\begin{equation}
  \label{eq:peanoode}
  \begin{cases}
    r'(t) = \displaystyle\sqrt{\frac{1}{\alpha^{-2}r(t)^{-2(1+1/\alpha)} - 1}},\\
    r(0)=0.
  \end{cases}
\end{equation}
It is easy to check that, thanks to the assumption $\alpha<-1$, the r.h.s.\ of the ODE is Hölder continuous of parameter $1+1/\alpha$ at $0$ (but not Lipschitz).
This guarantees the existence of a solution, but not its unicity. 
Indeed, this equation admits two kinds of solutions, either $r_1(t)\equiv 0$ or $r_2(t)\not\equiv 0$, where the transition between $r_1(t)$ and $r_2(t-t_0)$ can happen at any $t_0\ge0$.
However, the only admissible solution of \eqref{eq:integraleq} is $r_2$, as can be directly checked.

We now prove the representation \eqref{eq:rtexpr}.
Assume w.l.o.g. that $t$, and hence $x(t)$, be positive.
Due to the Hölder continuity of the r.h.s.\ of the ODE in \eqref{eq:peanoode}, we get that $|r'(t)|\le C t^{1+1/\alpha}$.
Hence,
\[
|x'(t)-x'(0)| = |\sqrt{1+r'(t)^2}-1| \le |r'(t)| \le Ct^{1+1/\alpha}.
\]
Here, we used the $1/2$-H\"older property of the square root.
Finally, a simple computation shows that $|x(t)-t x'(0)| = \grando{t^{2+1/\alpha}}$, which yields
\begin{equation*}
	\label{eq:r}
    r(t)= \left( x(t)\right)^{-\alpha} = \left( t  +\grando{t^{2+1/\alpha}}\right)^{-\alpha}= t^{-\alpha} +\grando{t^{-(2+1/\alpha)(\alpha+1)}}=t^{-\alpha} +\grando{t^{-2(\alpha+1)}}.
\end{equation*}
Here, in the last step we used the fact that $-(2+1/\alpha)(\alpha+1)<-2(\alpha+1)$.
This proves the claim and thus the first part of the statement.

Let now $\alpha =-1$. 
In this case, by letting $r(t)=t$, the metric on the surface of revolution is
\begin{equation*}
 {g}_{\mathcal S}(t,\vartheta) =\left(\ba{cc} 2&0\\0&t^2\ea\right).
\end{equation*}
Consider the diffeomorphism $\Psi:(x,\theta)\in\R\times\Tu\mapsto (t,\vartheta)\in \mathcal S$ defined as 
\begin{equation}
	\label{eq:cintyiso}
	\Psi(x,\theta)=\sqrt 2\left(
	\begin{array}{c}
		x\\
		\theta
	\end{array}
	\right).
\end{equation}
Then the statement follows from the following computation,
\begin{equation*}
 	\Phi^* {g}_{\mathcal S} (x,\theta)= \left(J_{\Psi}^{-1}\right)^T {g}_{\mathcal S}(\Psi(x,\theta)) J_{\Psi}^{-1} = \left(\ba{cc} 1&0\\0& r\big(\Psi(x,\theta)\big)^{2}/2\ea\right)=\left(\ba{cc} 1&0\\0& x^2\ea\right).
\end{equation*}
\end{proof}

\begin{rmk}
	If $\alpha>-1$ we cannot have a result like the above, since the change of variables \eqref{eq:ccoor} is no more regular.
    In fact, the function $r(t)=t^{-\alpha}$ has an unbounded first derivative near $0$.
	% On the other hand, if $\alpha$ is a negative integer, by iterating \eqref{eq:erreder} follows that the change of variables \eqref{eq:ccoor} (or \eqref{eq:cintyiso} for $\alpha=-1$) is indeed $C^\infty$.
	% A similar argument can be used to prove that, if $\alpha<-k$ for some $k\in\N$,  the change of variable is of class $C^k$.
\end{rmk}

\section{Complex self-adjoint extensions} % (fold)
\label{app:schroedinger}

The natural functional setting for the Schr\"odinger equation on $M_\alpha$ is the space of square integrable complex-valued function $L^2_\C(M,\misura)$. 
Recall that a self-adjoint extension $B$ of an operator $A$ over $L^2_\C(M,\misura)$ is a \emph{real self-adjoint extensions} if and only if $u\in \dom B$ implies $\overline u \in \dom B$ and $B(\overline u)=\overline(Bu)$.
The self-adjoint extension of $A$ over $\ldue$ are exactly the restrictions to this space of the real self-adjoint extension of $A$ over $L^2_\C(M,\misura)$.

All the theory of Section~\ref{sec:sae} extends to the complex case, in particular, we have the following generalization of Theorem~\ref{thm:zettl}.

\begin{thm}[Theorem 13.3.1 in \cite{zettl}]\label{thm:zettlcompl}
	Let $A$ be the Sturm-Liouville operator on $L^2_\C(J,w(x)dx)$ defined in \eqref{eq:slo}.
	Then
	\[
		n_+(A)=n_-(A)=\#\{ \text{limit-circle endpoints of } J\}.
	\]

	Assume now that $n_+(A)=n_-(A)=2$, and let $a$ and $b$ be the two limit-circle endpoints of $J$. 
	Moreover, let $\phi_1,\phi_2\in \dmax(A)$ be linearly independent modulo $\dmin(A)$ and normalized by 
	$
		[\phi_1,\phi_2](a)=[\phi_1,\phi_2](b)=1.
	$
	Then, $B$ is a self-adjoint extension of $A$ over $L^2_\C(J,w(x)dx)$ if and only if $Bu=A^*u$, for any $u\in\dom B$, and one of the following holds
	\begin{enumerate}
		\item\label{it:disjointcompl} \emph{Disjoint dynamics:} there exists $c_+,c_-\in(-\infty,+\infty]$ such that $u\in\dom B$ if and only if
			\begin{gather*}
				[u,\phi_1](0^+)=c_{+}[u,\phi_2](0^+)\quad \text{and} \quad
				[u,\phi_1](0^-)=d_{+}[u,\phi_2](0^-).
			\end{gather*}
		\item\label{it:mixedcompl} \emph{Mixed dynamics:} 
			there exist $K\in SL_2(\R)$ and $\gamma\in(-\pi,\pi]$ such that $u\in\dom B$ if and only if
			\[
				U(0^-)= e^{i\gamma}K \,U(0^+), \qquad\text{ for } 
					U(x)= \left( \begin{array}{c} {[u,\phi_1](x)}\\ {[u,\phi_2](x)}\\ \end{array} \right).
			\]
	\end{enumerate}
	Finally, $B$ is a real self-adjoint extension if and only if it satisfies \eqref{it:disjointcompl} the disjoint dynamic or \eqref{it:mixedcompl} the mixed dynamic with $\gamma=0$.
\end{thm}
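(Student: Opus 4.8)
The plan is to deduce the statement from the real case (Theorem~\ref{thm:zettl}) together with the standard von Neumann boundary-form description of self-adjoint extensions (see \cite{reedsimon,zettl}); the only genuinely new features over $\C$ are the form of the coupling datum in the mixed case and the characterization of the real extensions. First, the count $n_+(A)=n_-(A)=\#\{\text{limit-circle endpoints of }J\}$ is the classical Weyl limit-point/limit-circle alternative, which is insensitive to the scalar field; it is \cite[Lemma~13.3.1]{zettl}, already invoked in the real statement. So from now on assume $n_+(A)=n_-(A)=2$, with limit-circle endpoints $a$ and $b$.

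Next I would introduce the boundary-value space $\mathcal B=\dmax(A)/\dmin(A)$. By the Patching Lemma and Remark~\ref{rmk:dmax} (see \eqref{eq:patching}, \eqref{eq:dmax2}), $\dim_\C\mathcal B=4$, and the Lagrange identity $(Au,v)-(u,Av)=[u,v](b)-[u,v](a)$ equips $\mathcal B$ with the non-degenerate skew-Hermitian form $\Omega(u,v)=[u,v](b)-[u,v](a)$. As in the real case, an operator $B$ with $\dmin(A)\subset\dom B\subset\dmax(A)$ and $Bu=A^*u$ is self-adjoint if and only if $\dom B/\dmin(A)$ is a maximal $\Omega$-neutral subspace of $\mathcal B$; since $i\,\Omega$ is a Hermitian form of signature $(2,2)$, von Neumann theory forces such a subspace to have dimension $n_+=2$. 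After normalizing $[\phi_1,\phi_2](a)=[\phi_1,\phi_2](b)=1$, the maps $u\mapsto\bigl([u,\phi_1](x),[u,\phi_2](x)\bigr)$ for $x=a$ and $x=b$ identify $\mathcal B$ with $\C^2\oplus\C^2$, carrying $[u,v](x)$ into the form $\langle J\,\cdot\,,\,\cdot\,\rangle$ on each summand, where $J$ is the standard $2\times2$ symplectic matrix.

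Then I would classify the $2$-dimensional $\Omega$-neutral subspaces $L\subset\C^2\oplus\C^2$. If $L$ meets one of the summands nontrivially then, being $2$-dimensional and neutral, it splits as $L=L_a\oplus L_b$ with $L_a,L_b$ lines; a line is neutral for this skew-Hermitian form exactly when it has the form $[u,\phi_1](x)=c\,[u,\phi_2](x)$ with $c\in(-\infty,+\infty]$, so one recovers the disjoint dynamics parametrized by $c_\pm$. Otherwise $L$ projects isomorphically onto both summands, hence is the graph of a linear map $M\colon\C^2\to\C^2$, and $\Omega$-neutrality becomes the intertwining relation $M^\dagger JM=J$. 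A direct $2\times2$ computation (using $\operatorname{adj}(M)=J^{-1}M^{T}J$) shows that then $|\det M|=1$ and, writing $\det M=e^{2i\gamma}$, that $e^{-i\gamma}M$ is real; hence $M=e^{i\gamma}K$ with $K\in SL_2(\R)$ and $\gamma\in(-\pi,\pi]$ (the pairs $(\gamma,K)$ and $(\gamma+\pi,-K)$ giving the same $M$). Conversely each such datum defines a Lagrangian $L$, hence a self-adjoint $B$; this is the mixed dynamics $U(b)=e^{i\gamma}KU(a)$. Translating back through the above identification gives the two alternatives in the statement.

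Finally, for the real self-adjoint extensions I would note that complex conjugation $u\mapsto\overline u$ preserves $\dmax(A)$ and $\dmin(A)$ (since $1/p,q,w$ are real), that $\phi_1,\phi_2$ can be chosen real as solutions of the real equation $Au=0$, and hence that conjugation acts on $\mathcal B\cong\C^2\oplus\C^2$ entrywise, with $[\overline u,\overline v](x)=\overline{[u,v](x)}$. Thus $B$ is real precisely when its Lagrangian is conjugation-invariant. In the disjoint case this is automatic, the parameters $c_\pm$ being real. In the mixed case $U(b)=e^{i\gamma}KU(a)$ conjugates, since $K$ is real, to $U(b)=e^{-i\gamma}KU(a)$, so conjugation-invariance forces $e^{2i\gamma}=1$, i.e.\ $\gamma\in\{0,\pi\}$; and $\gamma=\pi$ with $K$ is the same condition as $\gamma=0$ with $-K\in SL_2(\R)$. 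Hence the real mixed extensions are exactly those with $\gamma=0$, which is the content of Theorem~\ref{thm:zettl}. The step I expect to be the main obstacle is pinning down the coupling datum: verifying that with the chosen normalization the graph condition is exactly $M^\dagger JM=J$, solving this equation over $\C$ to obtain precisely $e^{i\gamma}SL_2(\R)$ with the correct range of $\gamma$, and checking that every admissible $M$ yields a self-adjoint --- not merely symmetric --- operator.
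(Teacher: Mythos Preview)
The paper does not prove this theorem at all: it is quoted verbatim as ``Theorem 13.3.1 in \cite{zettl}'' and used as a black box, exactly as the real version (Theorem~\ref{thm:zettl}) was earlier. There is therefore no proof in the paper to compare your attempt against.

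That said, your sketch is a correct and fairly standard route to the result. The reduction to classifying maximal isotropic subspaces of the boundary space $\mathcal B\cong\C^2\oplus\C^2$ for the skew-Hermitian form $\Omega(u,v)=[u,v](b)-[u,v](a)$ is precisely how Zettl (and, more generally, the boundary-triple literature) organizes the argument. Your computation in the mixed case is right: from $M^{\dagger}JM=J$ one gets $\overline{M}=(\det M)^{-1}M$, hence $|\det M|=1$ and $e^{-i\gamma}M\in SL_2(\R)$ with $\det M=e^{2i\gamma}$. Your observation in the separated case---that a neutral complex line in $(\C^2,\langle J\cdot,\cdot\rangle)$ is exactly one given by a \emph{real} slope $c\in(-\infty,+\infty]$---is also correct and is the reason the separated parameters stay real over $\C$. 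The reality characterization via conjugation-invariance of the Lagrangian is likewise the standard argument.

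The only place I would ask you to tighten is the splitting in the separated case: you assert that if $L$ meets one summand nontrivially it must split as $L_a\oplus L_b$. This is true, but the clean reason is that if $(\xi,0)\in L$ with $\xi\neq0$ then isotropy forces every $(\eta,\zeta)\in L$ to have $\eta\in\C\xi$ (since $\omega_a$ is nondegenerate and the $\omega_a$-orthogonal of a line in $\C^2$ is itself); from there $L=\C\xi\oplus\C\zeta$ with $\zeta$ neutral at $b$. Making that step explicit removes the only gap in the outline.
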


As a consequence of Theorem~\ref{thm:zettlcompl}, we get a complete description of the essential self-adjointness of $\delcomp$ over $L^2_\C(M,\misura)$, extending Theorem~\ref{thm:sa}, and of the complex self-adjoint extensions of $\delfz$, extending Theorem~\ref{thm:sae}.

\begin{thm}\label{thm:sacompl}
    Consider $M_\alpha$ for $\alpha \in\R$ and the corresponding Laplace-Beltrami operator $\delcomp$ as an unbounded operator on $L^2_\C(M,\misura)$.
    Then it holds the following.
    \begin{enumerate}
    \renewcommand{\theenumi}{\roman{enumi}}
        \item If $\alpha\le -3$ then $\delcomp$ is essentially self-adjoint;
        \item if $\alpha\in(-3,-1]$, only the first Fourier component $\widehat \Delta_0$ is not essentially self-adjoint;
        \item if $\alpha\in(-1,1)$, all the Fourier components of $\delcomp$ are not essentially self-adjoint;
        \item if $\alpha\ge1$ then $\delcomp$ is essentially self-adjoint.
    \end{enumerate}
\end{thm}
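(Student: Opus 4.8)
The plan is to observe that every ingredient of the proof of Theorem~\ref{thm:sa} was already set up to work over a complex Hilbert space, so that Theorem~\ref{thm:sacompl} requires essentially no new work. First I would recall the remark made at the beginning of Section~\ref{sec:sae}: a densely defined symmetric operator on $\ldue$ is essentially self-adjoint if and only if its natural complexification is essentially self-adjoint on $L^2_\C(M,\misura)$, and the analogous statement holds for each Fourier component $\widehat\Delta_k$ acting on $H_k$ versus its complexification on $L^2_\C(\R\setminus\{0\},|x|^{-\alpha}dx)$. In view of this, the four cases of Theorem~\ref{thm:sacompl} are literally equivalent to those of Theorem~\ref{thm:sa}, which has already been established, and that is the shortest route to a complete proof.

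For a self-contained complex argument I would instead rerun the three steps of the proof of Theorem~\ref{thm:sa} directly over $\C$. Step one: Proposition~\ref{prop:fourier}, whose statement and proof are already phrased for a general complex Hilbert space $\hilb=\bigoplus_{k\in\Z}H_k$, gives $n_\pm(\delcomp)=\sum_{k\in\Z}n_\pm(\widehat\Delta_k)$ on $L^2_\C(M,\misura)$, reducing essential self-adjointness of $\delcomp$ to that of all its Fourier components. Step two: Theorem~\ref{thm:zettlcompl} identifies $n_+(\widehat\Delta_k)=n_-(\widehat\Delta_k)$ with the number of limit-circle endpoints among $\{-\infty,0^-,0^+,+\infty\}$, exactly as Theorem~\ref{thm:zettl} does in the real case. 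Step three: Lemma~\ref{lem:limitpoint} classifies these endpoints; its proof only inspects whether the two-dimensional solution space of $\widehat\Delta_k u=0$ (which has a real basis, so nothing changes over $\C$) lies in the weighted $L^2$ space near each endpoint, and this is insensitive to the choice of scalar field. Assembling the three steps yields exactly the stated case distinction in $\alpha$.

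The only point requiring any care — and the closest thing to an obstacle — is checking that ``limit-circle'' and the deficiency-index count are genuinely the same notions over $\R$ and over $\C$. This is immediate: $\ker(\widehat\Delta_k\pm i)$ inside $L^2_\C$ is the complexification of the real $\pm i$--eigenspace and hence has the same dimension, and membership of solutions of $\widehat\Delta_k u=0$ in a weighted $L^2$ space does not depend on whether scalars are taken in $\R$ or $\C$. Consequently none of the explicit computations from the real proof need to be redone, and Theorem~\ref{thm:sacompl} follows.
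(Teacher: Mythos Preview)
Your proposal is correct and matches the paper's own treatment: the paper does not give a separate proof of Theorem~\ref{thm:sacompl} but simply records it as an immediate consequence of Theorem~\ref{thm:zettlcompl} together with the limit-point/limit-circle classification of Lemma~\ref{lem:limitpoint}, exactly as you outline. Your shorter route via the real/complex equivalence of essential self-adjointness (noted at the start of Section~\ref{sec:sae}) is also valid and is implicit in the paper's setup; the only phrasing to tidy is that there is no ``real $\pm i$--eigenspace'' to complexify---deficiency indices are by definition computed in $L^2_\C$, so they coincide automatically.
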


\begin{thm} \label{thm:saecompl}
	Let $\dminf$ and $\dmaxf$ be the minimal and maximal domains of $\delfzcomp$ on $L^2_\C(\R\setminus\{0\},|x|^{-\alpha})$, for $\alpha\in (-3,1)$.
	Then, 
	\begin{gather*}
		\dminf =  \text{closure of } \cc {\R\setminus\{0\}} \text{ in } H^2_\C(\R\setminus\{0\},|x|^{-\alpha}dx)\\
		\dmaxf = \{ u=u_0+u^+_D\phidp + u^+_N \phinp + u^-_D\phidm + u^-_N \phinm \colon\: u_0\in\dminf \text{ and } u^\pm_D,\,u_N^\pm\in\C\},
	\end{gather*}
	Moreover, $A$ is a self-adjoint extension of $\delfz$ if and only if $Au=(\delfz)^*u$, for any $u\in\dom A$, and one of the following holds
	\begin{enumerate}
 	\renewcommand{\theenumi}{\roman{enumi}}
		\item \em Disjoint dynamics: \em there exist $c_+,c_-\in (-\infty,+\infty]$ such that 
			\[
				\dom A = \big\{ u\in\dmaxf\colon\: u_N^+=c_+u^+_D \text{ and } u_N^-=c_- u^+_D\big\}.
			\]
		\item \em Mixed dynamics: \em there exist $K\in SL_2(\R)$ and $\gamma\in(-\pi,\pi]$ such that
			\[
				\dom A = \big\{ u\in\dmaxf \colon\: (u_D^-,u_N^-)= e^{i\gamma} K \, (u_D^+,u_N^+)^T \big\}.
			\]
	\end{enumerate}
	Finally, the Friedrichs extension $(\delfz)_F$ is the one corresponding to the disjoint dynamics with $c_+=c_-=0$ if $\alpha\le -1$ and with $c_+=c_-=+\infty$ if $\alpha>-1$.
\end{thm}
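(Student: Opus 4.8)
The plan is to follow the proof of Theorem~\ref{thm:sae} line by line, substituting its complex counterpart Theorem~\ref{thm:zettlcompl} for Theorem~\ref{thm:zettl}, and pointing out the few places where passing from real- to complex-valued functions actually matters.

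First I would prove the descriptions of $\dminf$ and $\dmaxf$. The unitary map $U_0\colon L^2_\C(\R\setminus\{0\},|x|^{-\alpha}dx)\to L^2_\C(\R\setminus\{0\},dx)$, $U_0 v(x)=|x|^{-\nicefrac{\alpha}{2}}v(x)$, conjugates $\delfz$ to the Bessel-type operator $\LBN_0=\partial_x^2-\frac{\alpha}{2}\left(\frac{\alpha}{2}+1\right)\frac1{x^2}$; since the results of \cite{bessel} and \cite[Lemma~13.3.1]{zettl} hold on complex $L^2$ spaces, $\dmin(\LBN_0)$ is the closure of $\cc{\R\setminus\{0\}}$ in $H^2_\C(\R\setminus\{0\},dx)$. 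The norm-equivalence computation of Theorem~\ref{thm:sae}, which compares the pulled-back $H^2$ norm with the intrinsic one on $\cc{\R\setminus\{0\}}$ by treating separately the portions supported near $0$ and away from $0$, only involves the moduli of (now complex) functions and of their derivatives, hence carries over verbatim and yields $\dminf$. For the maximal domain, note that $\phidpm$ and $\phinpm$ from \eqref{def:phidn} are real, smooth, supported near the origin, and annihilated by $\delfz$ outside a compact set, so they belong to $\dmaxf$, and $[\phidp,\phinp](0^+)=[\phidp,\phinp](0^-)=1$; hence $\phid$ and $\phin$ satisfy the hypotheses of Theorem~\ref{thm:zettlcompl}, and Remark~\ref{rmk:dmax} (whose proof is unaffected) gives $\dmaxf=\dminf+\spn_\C\{\phidp,\phinp,\phidm,\phinm\}$.

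Next I would run the classification. As in the real case, \eqref{eq:patching} and the bilinearity of the Lagrange bracket give $[u_0,\phin](0^\pm)=[u_0,\phid](0^\pm)=0$ for every $u_0\in\dminf$, while the explicit brackets among the model functions (such as $[\phidp,\phin](0^+)=[\phinp,\phid](0^+)=1$ and $[\phidm,\phin](0^+)=[\phinp,\phid](0^-)=0$) show that, writing $u=u_0+u_D^+\phidp+u_N^+\phinp+u_D^-\phidm+u_N^-\phinm$, one has $[u,\phin](0^+)=u_D^+$, $[u,\phid](0^+)=u_N^+$, and symmetrically at $0^-$. These identities are unchanged; only the coefficients $u_D^\pm,u_N^\pm$ are now complex. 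Feeding them into the disjoint/mixed dichotomy of Theorem~\ref{thm:zettlcompl}, whose mixed branch carries the extra phase $e^{i\gamma}$ with $\gamma\in(-\pi,\pi]$ absent in the real statement, produces exactly the two families in the theorem; and by the last sentence of Theorem~\ref{thm:zettlcompl} the real self-adjoint extensions are the disjoint ones together with the mixed ones with $\gamma=0$, recovering Theorem~\ref{thm:sae}.

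Finally I would identify the Friedrichs extension $(\delfz)_F$, whose domain consists of those $u\in\dmaxf$ with $\delfz u\in L^2_\C(\R\setminus\{0\},|x|^{-\alpha}dx)$ that lie in the closure of $\cc{\R\setminus\{0\}}$ in $H^1_\C(\R\setminus\{0\},|x|^{-\alpha}dx)$. The argument of Theorem~\ref{thm:sae} applies unchanged: if $\alpha\le-1$ then $\phin\notin H^1_\C(\R\setminus\{0\},|x|^{-\alpha}dx)$, forcing $u_N^+=u_N^-=0$, i.e.\ the disjoint case with $c_+=c_-=0$; if $\alpha>-1$ every endpoint is regular and \cite[Corollary~10.20]{weyl} gives the condition $u(0^\pm)=u_D^\pm=0$, i.e.\ $c_+=c_-=+\infty$. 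I do not anticipate a genuine obstacle: all the substance already appears in the proof of Theorem~\ref{thm:sae}, and the one truly new feature, the phase $e^{i\gamma}$ in the mixed boundary conditions, is handed to us by Theorem~\ref{thm:zettlcompl}. The only step requiring a moment's care is to confirm that the results of \cite{bessel} on the inverse-square potential are quoted in their complex-$L^2$ form, so that the norm-equivalence argument, and hence the descriptions of $\dminf$ and $\dmaxf$, transfer without modification.
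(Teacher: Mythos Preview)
Your proposal is correct and coincides with the paper's own approach: the paper does not give a separate proof of Theorem~\ref{thm:saecompl} but merely states it as a consequence of Theorem~\ref{thm:zettlcompl}, implicitly meaning that the proof of Theorem~\ref{thm:sae} goes through verbatim once Theorem~\ref{thm:zettl} is replaced by its complex version. Your write-up makes this explicit and correctly identifies the only new feature, the phase $e^{i\gamma}$ in the mixed case, as being supplied directly by Theorem~\ref{thm:zettlcompl}.
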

% section section_name (end)

\section*{Acknowledgments}
The authors would like to thank Professors G. Dell'Antonio, A.Grigor'yan, G. Panati and A. Posilicano, as well as M. Morancey, for the helpful discussions.

\bibliographystyle{amsplain}
\bibliography{coni}

\end{document}